\newcommand{\nc}{\newcommand}
 \nc{\cl}{\centerline}
 \nc{\SL}{{\rm SL}}
 \nc{\hatQ}{{\hat Q}}
 \nc{\sgn}{{\rm sgn}}
 \nc{\ad}{{\rm ad}}
  \nc{\idx}{{\rm index}}
 \nc{\Mat}{{\rm Mat}}
  \nc{\Ann}{{\rm Ann}}
 \nc{\Loewy}{{\rm Loewy}}
 \nc{\efp}{{\Bbb F}_p}
 \nc{\tildeN}{{\tilde N}}
 \nc{\tildeC}{{\tilde C}}
 \nc{\tildeA}{{\tilde A}}
 \nc{\tildeW}{{\tilde W}}
 \nc{\tildeS}{{\tilde S}}
 \newcommand{\Par}{{\rm Par}}
\nc{\baru}{{\overline u}}
\nc{\barA}{{\bar A}}
\nc{\barq}{{\overline q}}
\nc{\barR}{{\bar R}}
 \nc{\Orb}{{\rm Orb}}
 \nc{\hatsigma}{{\hat \sigma}}
 \nc{\hatpi}{{\hat \pi}}
  \nc{\hatzeta}{{\hat \zeta}}
 \nc{\Ocal}{{\mathcal O}}
 \nc{\M}{\mathfrak{m}}
 \nc{\seee}{\mathbb C}
 \nc{\varleq}{\preccurlyeq}
  \newcommand{\DEnd}{{\rm DEnd}}
 \nc{\hatlambda}{{\hat\lambda}}
 \nc{\hatphi}{{\hat \phi}}
 \nc{\daggerlambda}{{\lambda^\dagger}}
    \nc{\barr}{{\bar r}}
  \nc{\bart}{{\bar t}}
    \nc{\barsigma}{{\bar \sigma}}
  \newcommand{\resp}{{resp.\,}}
  \newcommand{\Hec}{{\rm Hec}}
   \newcommand{\op}{{\rm op}}
\nc\diag{{\rm diag}}
\renewcommand{\vert}{{\,|\,}}
\nc{\hatL}{{\hat L}}
\nc{\barE}{{\bar   E}}
\nc{\D}{{\mathcal D}}
\nc{\E}{{\mathcal E}}
\nc{\F}{{\mathcal F}}
\nc{\FF}{{\mathcal F}}
\nc{\I}{{\mathcal I}}
\nc{\even}{{\rm e}}
\nc{\ep}{\epsilon}
\nc{\odd}{{\rm o}}
\nc{\Coker}{{\rm Coker}}
\nc{\olE}{{\overline E}}
\nc{\indBG}{{\rm ind}_B^G\,}
\nc{\indHG}{{\rm ind}_H^G\,}
\renewcommand{\O}{{\mathcal O}}
\nc{\que}{{\mathbb Q}}
\nc{\barlambda}{{\bar\lambda}}
\nc{\barmu}{{\bar\mu}}
\nc{\barnu}{{\bar\nu}}
\nc{\bartau}{{\bar\tau}}
\nc{\barm}{{\overline  m}}
\nc{\divind}{{\rm div.ind}}
\nc{\tl}{{\tilde{\lambda}}}
\nc{\dar}{\downarrow}
\nc{\eno}{{\mathbb N}_0}
\nc{\Sym}{{\rm Sym}}
\nc{\Symm}{{\rm Sym}}
\newcommand{\q}{\quad}
\newcommand{\de}{\delta}
\newcommand{\real}{{\rm real}}
\renewcommand{\mod}{{\rm mod}}
\newcommand{\Sp}{{\rm Sp}}
\newcommand{\bs}{\bigskip}
\renewcommand{\vert}{\,|\,}
\renewcommand{\sgn}{{\rm sgn}}
\newcommand{\e}{{\rm e}}
\newcommand{\reg}{{\rm reg}}
\newcommand{\ind}{{\rm ind}}
\renewcommand{\vert}{\,|\,}
 \newcommand{\tbw}{\textstyle\bigwedge}
\newcommand{\zed}{{\mathbb Z}}
\newcommand{\End}{{\rm End}}
\newcommand{\Hom}{{\rm Hom}}
\renewcommand{\mod}{{\rm mod}}
\nc{\barB}{{\overline B}}
\nc{\barb}{{\overline b}}
\renewcommand{\mod}{{\rm{mod}}}
\nc{\geom}{{\rm geom}}
\nc{\rep}{{\rm rep}}
\newtheorem{definition}{Definition}[section]
\newtheorem{proposition}[definition]{Proposition}
\newtheorem{lemma}[definition]{Lemma}
\newtheorem{corollary}[definition]{Corollary}
\newtheorem{example}[definition]{Example}
\newtheorem{remark}[definition]{Remark}
\begin{document}


\centerline{\bf  Double centralisers and annihilator ideals of}

\centerline{ \bf  Young permutation modules}

\bigskip

\centerline{Stephen Donkin}

\bigskip

{\it Department of Mathematics, University of York, York YO10 5DD}

\medskip

\centerline{\tt stephen.donkin@york.ac.uk}

\bs

\centerline{24  July   2020}

\bs

\section*{Abstract}  We study  the double centraliser  algebras  and annihilator ideals of certain  permutation modules for symmetric groups and their quantum analogues.  Of particular interest is the behaviour of these algebras and ideals under base change.  By exploiting connections with  polynomial representation theory over a field  we establish the double centraliser property and dimension invariance of annihilator ideals  over fields.   By elementary localisation techniques we  obtain the corresponding results over arbitrary rings.   The modules considered include the permutation modules relevant to partition algebras, as  studied in the  recent paper by Bowman, Doty and Martin, \cite{BDM},    and we obtain in particular  their main results on the double centraliser property  in this context.  Much  of our  the development is equally  valid for the corresponding modules over Hecke algebras of type $A$ and, for the most part,  we give it at this level of generality.


\section{Introduction}    
Let $R$ be a ring and let $P$ be a left $R$-module. We have the annihilator ideal 
$$\Ann_R(P)=\{ r \in R  \vert r p=0 \hbox{ for all } p\in P\}$$  
 and the endomorphism (or centraliser)  ring 
$$\End_R(P)=\{\alpha: P\to  P \vert \alpha(r p)=r \alpha(p) \hbox{ for all }r\in R, p\in P\}$$
and thus, regarding $P$ as an $\End_R(P)$-module, the double endomorphism (or double centraliser) ring  
$$\DEnd_R(P)=\{ \sigma : P\to P \vert  \sigma\circ \alpha=\alpha\circ \sigma \hbox{ for all } \alpha \in \End_R(P)\}.$$

\q We have the natural map $\nu:R\to \DEnd_R(P)$ given by the action, i.e. $\nu(r)(p)=rp$, for $r\in R$, $p\in P$, and the complex  
$$0\to \Ann_R(P)\to R\to \DEnd_R(P)\to 0$$
which is exact except possibly at $\DEnd_R(P)$. We are interested in cases in which this  is exact, i.e. in which $\nu$ is surjective. When this happens we say that $P$ is a double centraliser module, or that $P$ has the double centraliser property.

\q  We are particularly interested in base change properties, and establishing the double centraliser property via this route.  In general the ring $R$ may be an algebra over a commutative ring $A$. Given an $A$-algebra $B$ we can then form the $A$-algebra $R_B=B\otimes_A R$ and the $R_B$-module $P_B=B\otimes_A P$.  We have the natural map on annihilators 
$$\Phi_B: B\otimes_A \Ann_R(P)\to \Ann_{R_B}(P_B).$$

\q For an $A$-algebra $B$ we write $B^\op$ for the opposite algebra. We also consider base change on endomorphism algebras. We have the natural map of  $A$-algebras
$$\Psi_B:B^\op \otimes_A \End_R(P)\to \End_{R_B}(P_B)$$
satisfying $\Psi_B(b\otimes\alpha)(b'\otimes p)=b'b\otimes \alpha(p)$, for $b,b'\in B$, $\alpha\in \End_R(P)$, $p\in P$.

\q We are interested in cases in which $\Phi_B$ and $\Psi_B$ are isomorphisms.

\q Our main focus is the case in which $R$ is a group algebra of a finite group and $P$ is a permutation module  (and the Hecke algebra analogue of this situation).  We take $A=\zed$, $G$ a finite group, $X$ a finite $G$-set, $R=\zed G$ and $P=\zed X$.  In this case the endomorphism algebra $\Lambda=\End_{\zed G}(\zed X)$ has a basis given by orbits of $G$ on $X\times X$ and  this gives rise to a basis over any coefficient ring. We may then study  base change for $P$ regarded as a module over $\Lambda$, and hence study the double centraliser of the permutation module over a group algebra.

\q Our primary concern is to show that certain permutation modules for  group algebras of symmetric groups   (and their quantised analogues over Hecke algebras)  have the double centraliser  property.  There are two aspects to this. We prove this over fields by using connections with the polynomial representation theory of general linear groups (and their  quantised analogues).  The result is  deduced over the integers and then over arbitrary rings by  elementary general considerations.   

\q We deal with these general elementary considerations in the first part of the paper.   They are closely connected with question of whether the annihilator of a module $P$ as above is stable under base change,  and the arguments used in both cases are extremely similar.  We assume, where convenient that $A$ is a Dedekind domain. We convert the problems into matrix problems for $A$-modules and show that these are compatible with localisation. In this way we are reduced to the case in which $A$ is a discrete valuation ring. The main result in the first  part (see Corollary 2.13 and Proposition 3.3)  is that,  under  certain assumptions, if base change holds  from the regular module $A$  to the module $A/\M$ and the double centraliser property holds for the $R/\M R$-module $P/\M P$,  for each maximal ideal  $\M$, then $P$ has the double centraliser property and  the base change property on ideals and endomorphism algebras.  In particular the double centraliser property holds over all $A$-algebras obtained by base change from $R$ (so over all coefficient rings if  $A=\zed$) if it holds over the residue fields.

\q The second part of the paper is more technical.  We establish the double centraliser property for many tilting modules for the quantised general linear groups (equivalently for the  $q$ Schur algebras). This  information is then transferred to the setting of representations of type $A$ Hecke algebras via the Schur functor and we deduce the double centraliser property and base change on double centralisers and annihilators  for many Young permutation modules, including those relevant to the partition algebras.  In the penultimate section we give an example of a  Young permutation module that  does not   have the double centraliser property or base change on annihilator ideals. In the final section we make some remarks on cell ideals and annihilators in the spirit of  \cite{Haerterich}, \cite{DotNym}, \cite{XX} and show that for many Young permutation modules the annihilator ideal is  a cell ideal.

\bs\bs\bs\bs


\centerline{\bf \LARGE  Part I: Base Change}

\bs\bs

\section{Localisation and base change for  annihilator ideals and centraliser  algebras.}

\q All modules will be left unless otherwise specified.  Let  $A$ be  a Dedekind domain or a field,   let $\Lambda$ be an $A$-algebra and $P$ a $\Lambda$-module.  We assume that  $\Lambda$ and $P$ are finitely generated and free over $A$.  We fix an $A$-basis $\lambda_1,\ldots,\lambda_n$ of $\Lambda$ and an $A$-basis $p_1,\ldots,p_d$ of $P$.  (In the applications, $P$ will be a module over an $A$-algebra $R$ and $P$ will be regarded as a module for $\Lambda=\End_R(P)$.) 

\q We have the structure constants $c^t_{is}\in A$ defined by the equations
$$\lambda_i p_s=\sum_{t=1}^d c^t_{is} p_t$$
for $1\leq i\leq n$, $1\leq s\leq d$. 

\q Let $B$ be an $A$-algebra.   An element $q$  of $\Lambda_B=B\otimes_A \Lambda$ may be uniquely written in the form $q=\sum_{i=1}^n b_i\otimes\lambda_i$.  The condition for $q$ to belong to the annihilator  $\Ann_{\Lambda_B}(P_B)$ of $P_B=B\otimes_A P$  is that $q(b\otimes p_s)=0$, for all $b\in B$, $1\leq s\leq d$, i.e.   $\sum_{t=1}^d \sum_{i=1}^n  b_ib\otimes c^t_{is}p_t=0$, i.e.
$$\sum_{i=1}^n c^t_{is}b_i=0\eqno{(1)}$$
 for all $1 \leq s,t\leq d$.
 
 \q It is convenient to generalise this condition to an arbitrary $A$-module.  For a module $M$ and $r\geq 0$  we write $M^r$ or $M^{(r)}$ for $M\oplus \cdots\oplus M$, the direct sum of $r$ copies of $M$.

 \begin{definition} Let $M$ be an $A$-module. We shall say that $(m_1,\ldots,m_n)\in M^n$ is an admissible vector  if 
 $$\sum_{i=1}^n c^t_{is}m_i=0  \eqno{(2)}$$ 
 for all $1\leq s,t\leq d$.  
 \end{definition}
 
\q  We write $M^n_\ad$ for the $A$-submodule of $M^n$ consisting of all admissible vectors.

\q We have the natural $A$-module isomorphism $A^n\otimes_A M\to M^n$, which restricts to an $A$-module homomorphism $\phi_M:A^n_\ad\otimes_A  M\to M^n_\ad$. Moreover, $A^n_\ad$ is a pure submodule of $A^n$, i.e.  $A^n/A^n_\ad$ is torsion free, and hence flat, so that $\phi_M$ is injective. We shall call an element of $M^n$ a realisable vector (or vector-realisable)  if it belongs to the image of $\phi_M$, i.e.  $y\in M^n$ is realisable if for some $r\geq 1$ there exist elements $(a_{i1},\ldots,a_{in})\in A^n_\ad$ , for $1\leq i\leq r$, and $m_1,\ldots,m_r\in M$ such that $y=\sum_{i=1}^r (a_{i1}m_i,\ldots,a_{in}m_i)$.
We write $M^n_\real$ for the submodule of $M^n$ consisting of realisable vectors and  say that $M$ is vector-realisable if $M^n_\ad=M^n_\real$ (i.e. if $\phi_M$ is surjective).

\begin{remark}  One may give a basis free interpretation of $M^n_\ad$ as follows.    We have the representation $\rho: \Lambda\to \End_A(P)$ and hence the induced map \\
$\theta:\Lambda\otimes_A M  \to  \End_A(P)\otimes_A M$. The $A$-module $\End_A(P)$ has basis $e_{st}$, where $e_{st}(p_u)$ is $p_s$ if $u=t$ and $0$ otherwise. Then  $\rho(\lambda_i)=\sum_{s,t} c^t_{is} e_{ts}$ and hence, for $h=\sum_i \lambda_i\otimes m_i  \in \Lambda\otimes_A M$, we have $\theta(h)=\sum_{i,s,t} c^t_{is} e_{ts}\otimes m_i$ and hence $\theta(h)=0$ if and only if $\sum_{i=1}^n c^t_{is} m_i=0$, for all $s,t$. In this way $M^n_\ad$ is identified with the kernel of the natural map $\theta: \Lambda\otimes_A M \to  \End_A(P)\otimes_A M$.
\end{remark}

\q We study the  the situation for endomorphism algebras in a similar fashion.   An element $\sigma\in \End_{\Lambda_B}(P_B)$ may be represented by a $d\times d$  - matrix  $(\sigma_{st})$, with entries in $B$,  defined by the equations
$$\sigma(1\otimes p_s)=\sum_{t=1}^d \sigma_{ts} \otimes p_t$$
for $1\leq s\leq d$.  Given such a matrix $(\sigma_{st})$ we may define a $B$-endomorphism as above.  The condition for  $\sigma$  to be  a $\Lambda_B$-endomorphism is that it commutes with the action of $\Lambda$, i.e. that $\sigma((1\otimes \lambda_i)(1\otimes p_s))=(1\otimes \lambda_i)\sigma(1\otimes p_s)$ i.e.
$\sum_{t,u} c^t_{is} \sigma_{ut}\otimes p_u=\sum_{t, u} c^u_{it} \sigma_{ts}\otimes p_u$, i.e. 
$$\sum_t c^t_{is} \sigma_{ut}=\sum_t  c^u_{it} \sigma_{ts}\eqno{(3)}$$
for all $i,s,u$.

\q It will be convenient to generalise this condition to $A$-modules. For an $A$-module $M$ we write $\Mat_d(M)$ for the additive group of $d\times d$ matrices with entries in $M$. We regard $\Mat_d(M)$ as an $A$-module with entrywise action, i.e. 
$a(m_{st})=(am_{st})$. Note that we have the  $A$-module isomorphism $\Mat_d(A)\otimes_A M\to \Mat_d(M)$, taking $(a_{st})\otimes m$ to $(a_{st}m)$, for $(a_{st})\in \Mat_d(A)$, $m\in M$.

\begin{definition} Let $M$ be an $A$-module.  We shall say that $(m_{st})\in \Mat_d(M)$ is an admissible matrix  if 
$$\sum_t  c^t_{is} m_{ut}=\sum_t  c^u_{it} m_{ts}\eqno{(4)}$$
for all $i,s,u$.
\end{definition}

\q We write $\Mat_d(M)_\ad$ for the set of admissible matrices over $M$. Note that $\Mat_d(M)_\ad$ is an $A$-submodule of $\Mat_d(M)$.   Note also  that the isomorphism $\Mat_d(A)\otimes_A M\to \Mat_d(M)$ induces a map \\
$\psi_M: \Mat_d(A)_\ad\otimes  M\to \Mat_d(M)_\ad$.  Moreover, $\Mat_d(A)_\ad$ is a pure submodule of $\Mat_d(A)$, i.e. $\Mat_d(A)/\Mat_d(A)_\ad$ is torsion free, and hence flat, so that $\psi_M$ is injective. We write $\Mat_d(M)_\real$ for the image of $\psi_M$.   We say that an element $(m_{st})$ of $\Mat_d(M)$ is a realisable matrix (or matrix-realisable)   if it belongs to the image of $\psi_M$,  i.e, if for some $r\geq 1$ there exist elements $(a^i_{st})\in \Mat_d(A)_\ad$, and $m_i\in M$, for $1\leq i\leq r$, such that  $m_{st}=\sum_{i-=1}^r a^i_{st} m_i$, for all $1\leq s,t\leq d$. We shall say that $M$ is matrix-realisable if $\Mat_d(M)_\ad=\Mat(M)_\real$ (i.e. if $\psi_M$ is surjective). 

\begin{remark}  One may give a basis free interpretation of $\Mat_d(M)_\ad$ as follows.   The endomorphism algebra $\End_A(P)$ is naturally a $\Lambda$-bimodule.  Hence $\End_A(P)\otimes_A M$ is naturally a $\Lambda$-bimodule with action $\lambda( \phi\otimes m)\mu= \lambda\phi\mu\otimes m$, for $\lambda,\mu\in \Lambda$, $m\in M$, $\phi\in \End_A(P)$.   The $A$-module $\End_A(P)$ has basis $e_{st}$, where $e_{st}(p_u)$ is $p_s$ if $u=t$ and $0$ otherwise.  For  $1\leq i\leq n$, $1\leq s,t\leq d$,  we have $\lambda_ie_{st}=\sum_u  c_{is}^ue_{ut}$ and $e_{st}\lambda_i=\sum_v c^t_{iv}e_{sv}$ and it follows that an element $\sum_{s,t}  e_{st}\otimes m_{st}$ of  $ \End_A(P)\otimes_A M$ belongs to the zeroth Hochschild cohomology group $H^0(\Lambda,\End_A(P)\otimes_A M)$ if and only if 
$$\sum_t  c^t_{is} m_{ut}=\sum_t  c^u_{it} m_{ts}$$
for all $i,s,u$. In this way we may identify $\Mat_d(M)_\ad$ and \\
$H^0(\Lambda,  \End_A(P)\otimes_A M)$.

\q We have  $\End_\Lambda(P)=H^0(\Lambda,\End_A(P))$, hence the induced map  \\
$\End_\Lambda(P)\otimes_A M \to H^0(\Lambda, \End_A(P)\otimes_A M)$,  and the question is whether this is an isomorphism.
\end{remark}

\begin{remark}   We have the natural isomorphisms   $A^n_\ad\otimes_A  A\to A^n_\ad$   and 
$\Mat_d(A)_\ad\otimes_A  A\to \Mat_d(A)_\ad$  so that the left regular module $A$ is vector-realisable and matrix-realisable. Moreover, a direct sum $M=\bigoplus_{i\in I}  M_i$ is vector-realisable (\resp matrix-realisable)  if and only if each $M_i$ is vector-realisable (\resp matrix-realisable).  In particular a free $A$-module is vector-realisable and matrix-realisable  and if $A$ is a field then every $A$-module is vector-realisable and matrix-realisable.
\end{remark}

\begin{lemma} Let $0\to L\to M\to N\to 0$ be a short exact sequence of $A$-modules. If $L$ and $N$ are vector-realisable (\resp matrix-realisable)  then so is $M$.
\end{lemma}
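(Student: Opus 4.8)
The plan is to exhibit $M^n_\ad$ as the kernel of an $A$-linear map that is functorial in $M$, combine this with a flat base change, and then run the surjectivity half of the five lemma. I carry this out for the vector-realisable statement; the matrix-realisable statement is proved identically, with $A^n$, $A^n_\ad$, $\phi$ and the kernel description of Remark 2.2 replaced throughout by $\Mat_d(A)$, $\Mat_d(A)_\ad$, $\psi$ and the kernel description of Remark 2.4 (in Remark 2.4, $\Mat_d(M)_\ad$ is again, functorially in $M$, the kernel of an $A$-linear map out of $\End_A(P)\otimes_A M$, so the argument below transports without change).

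First I produce two exact sequences of $A$-modules out of $0\to L\to M\to N\to 0$. On one side, since $A$ is a Dedekind domain --- the case of a field being covered by the preceding remark --- and $A^n_\ad$ is a submodule of the free module $A^n$, it is torsion free, hence flat, so
$$0\to A^n_\ad\otimes_A L\to A^n_\ad\otimes_A M\to A^n_\ad\otimes_A N\to 0$$
is exact. On the other side, by Remark 2.2 the functor $M\mapsto M^n_\ad$ is the kernel of $\theta_M=\rho\otimes\id_M:\Lambda\otimes_A M\to\End_A(P)\otimes_A M$; applying $-\otimes_A L$, $-\otimes_A M$, $-\otimes_A N$ to $\Lambda$ and to $\End_A(P)$ (both free over $A$) gives a commutative diagram with exact rows and verticals $\theta_L,\theta_M,\theta_N$, and the snake lemma applied to it gives exactness of
$$0\to L^n_\ad\to M^n_\ad\to N^n_\ad .$$
(Directly: $L^n_\ad\to M^n_\ad$ is the restriction of the injection $L^n\hookrightarrow M^n$, and a vector of $M^n_\ad$ mapping to $0$ in $N^n$ comes from a vector of $L^n$ which, as $L\to M$ is injective, already satisfies the admissibility relations $(2)$, hence lies in $L^n_\ad$.)

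These two sequences fit into a commutative diagram
$$\begin{array}{ccccccccc}
0 & \to & A^n_\ad\otimes_A L & \xrightarrow{\ \alpha\ } & A^n_\ad\otimes_A M & \to & A^n_\ad\otimes_A N & \to & 0\\
  &     & {\scriptstyle\phi_L}\big\downarrow &  & {\scriptstyle\phi_M}\big\downarrow &  & {\scriptstyle\phi_N}\big\downarrow &  & \\
0 & \to & L^n_\ad & \xrightarrow{\ \alpha'\ } & M^n_\ad & \to & N^n_\ad & &
\end{array}$$
with exact rows (the bottom one exact at its first two terms), and $\phi_L,\phi_N$ surjective by hypothesis. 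Now chase: given $y\in M^n_\ad$ with image $\bar y\in N^n_\ad$, lift $\bar y$ through $\phi_N$ to some $z\in A^n_\ad\otimes_A N$, and lift $z$ through the (surjective) top map to some $w\in A^n_\ad\otimes_A M$. Then $\phi_M(w)$ and $y$ have the same image $\bar y$ in $N^n_\ad$, so $y-\phi_M(w)$ lies in the kernel of $M^n_\ad\to N^n_\ad$, which by exactness equals $\alpha'(L^n_\ad)$; write $y-\phi_M(w)=\alpha'(u)$ with $u\in L^n_\ad$ and lift $u$ through $\phi_L$ to some $v\in A^n_\ad\otimes_A L$. By commutativity of the left square, $\phi_M(\alpha(v))=\alpha'(\phi_L(v))=\alpha'(u)=y-\phi_M(w)$, whence $y=\phi_M\bigl(w+\alpha(v)\bigr)\in\mathrm{im}\,\phi_M$. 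As $y\in M^n_\ad$ was arbitrary, $\phi_M$ is surjective, i.e. $M$ is vector-realisable.

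The argument has no real obstacle; the only points needing a word are the functoriality in $M$ of the kernel descriptions in Remarks 2.2 and 2.4 (clear, as the equations $(2)$, $(4)$ are $A$-linear in the entries), and the flatness of $A^n_\ad$ and $\Mat_d(A)_\ad$ (immediate: torsion-free submodules of free modules over a Dedekind domain). It is essential --- and harmless --- that the bottom row need not be right exact: only the ``surjective four lemma'' is used, which requires exactness of the bottom row at $L^n_\ad$ and $M^n_\ad$, exactness of the top row at its two rightmost spots, and surjectivity of $\phi_L,\phi_N$.
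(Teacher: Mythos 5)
Your proof is correct and is essentially the paper's proof, just repackaged: the paper's explicit element chase (lift the realisation of the image in $N$, subtract, observe the difference is an admissible matrix with entries in $L$, realise that, combine) is exactly the diagram chase you carry out, with the functorial kernel description of Remark 2.2/2.4 supplying the left-exactness of $M\mapsto M^n_\ad$ and right-exactness of $\otimes_A$ supplying the top row. The appeal to flatness of $A^n_\ad$ is harmless but not actually used in your chase (injectivity of $A^n_\ad\otimes_A L\to A^n_\ad\otimes_A M$ plays no role), so the argument runs the same whether or not one records that fact.
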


\begin{proof} We give the matrix version and leave the vector version to the reader. We may assume that $L$ is a submodule of $M$ and $N=M/L$.  Let $(m_{st})\in \Mat_d(M)_\ad$.  We set ${\overline  m}_{st}=m_{st}+L\in M/L$, $1\leq s,t\leq d$.  Then $(\barm_{st})\in \Mat_d(N)_\ad$.  Hence there exist a positive integer $u$, $(a^i_{st})\in \Mat_d(A)_\ad$ and $n_i\in N$, for $1\leq i\leq u$, such that $\barm_{st}=\sum_{i=1}^u a^i_{st}n_i$, for all $s,t$.  We write $n_i=m_i+L$, for some $m_i\in M$, $1\leq i\leq u$.  Then we have $m_{st}\equiv \sum_{i=1}^u a^i_{st}m_i$ (mod $L$). Putting $l_{st}=m_{st} -  \sum_{i=1}^u a^i_{st}m_i$, we have that $(l_{st})\in \Mat_d(N)_\ad$ and hence there exists a positive integer $v$, $(b^j_{st})\in \Mat_d(A)_\ad$ and $l_j\in L$, for $1\leq j\leq v$, such that $l_{st}=\sum_{j=1}^v b^j_{st}l_j$, for all $s,t$.  Then we have $m_{st}=  \sum_{i=1}^u a^i_{st}m_i+\sum_{j=1}^v b^j_{st}l_j$, which shows that $(m_{st})$ is realisable.
\end{proof}

\q The constructions  introduced above behave well with respect to localisation.  Let $S$ be a multiplicatively closed subset of $A$ (containing $1$). Then we have the $S^{-1}A$-algebra  $S^{-1}\Lambda$, with   
$S^{-1}A$ basis $\frac{\lambda_i}{1}$, 
$1\leq i\leq n$,  and the $S^{-1}P$-module with $S^{-1}A$-basis   $\frac{p_s}{1}$,  $1\leq s\leq d$.  We consider admissible vectors and matrices for  an $S^{-1}A$-module with respect to these bases and the   structure constants $\frac{c^t_{is}}{1}$.

\begin{lemma} Let $S$ be a multiplicatively closed subset of $A$  and let $M$ be an $A$-module.

(i) The  inclusions of  $M^n_\ad$ and   $M^n_\real$ into $M^n$  induce isomorphisms\\
 $$S^{-1} M^n_\ad\to (S^{-1}M)^n_\ad    \   \hbox{ and }$$
 $$   S^{-1}M^n_\real \to (S^{-1}M)^n_\real. $$

(ii) The  inclusions of  $\Mat_d(M)_\ad$ and   $\Mat_d(M)_\real$ into  $\Mat_d(M)$  induce isomorphisms\\
 $$S^{-1}\Mat_d(M)_\ad\to \Mat_d(S^{-1}M)_\ad    \   \hbox{ and }$$
 $$   S^{-1}\Mat_d(M)_\real \to \Mat_d(S^{-1}M)_\real. $$

(iii) If $M$ is a vector-realisable (\resp matrix-realisable)    $A$-module  then $S^{-1}M$ is a vector-realisable (\resp matrix-realisable)    $S^{-1}A$-module.
\end{lemma}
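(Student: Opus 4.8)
The plan is to prove all three parts by the standard device of exhibiting each of these submodules as a kernel (or image) of a map between free/finitely-presented $A$-modules, and then invoking exactness of localisation.

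First I would handle $M^n_\ad$ and $\Mat_d(M)_\ad$. By Remark 2.2, $M^n_\ad$ is the kernel of the natural map $\theta_M:\Lambda\otimes_A M\to \End_A(P)\otimes_A M$; since $\Lambda$ and $\End_A(P)$ are free of finite rank over $A$, this map is obtained by applying $-\otimes_A M$ to the $A$-linear map $\theta_A:\Lambda\to\End_A(P)$, i.e. it is "multiplication by a fixed matrix of structure constants". Because $S^{-1}A$ is flat over $A$, the functor $S^{-1}(-)=S^{-1}A\otimes_A(-)$ is exact and commutes with $\otimes_A M$, so $S^{-1}(\ker\theta_M)=\ker(S^{-1}\theta_M)$, and $S^{-1}\theta_M$ is exactly the corresponding map built from $S^{-1}\Lambda$, $S^{-1}\End_A(P)$, $S^{-1}M$ with structure constants $\tfrac{c^t_{is}}{1}$. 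This gives the first isomorphism in (i); the $\Mat_d$ case in (ii) is identical, using Remark 2.4 which identifies $\Mat_d(M)_\ad$ with $H^0(\Lambda,\End_A(P)\otimes_A M)=\ker(\delta_M)$ for the appropriate $A$-linear $\delta_M$ (the "$\lambda$ acts on the left minus $\lambda$ acts on the right" map), again a base change of a fixed map of finite free $A$-modules. Alternatively, and perhaps more cleanly, one just observes directly from the defining equations $(2)$ and $(4)$ that $M^n_\ad$ and $\Mat_d(M)_\ad$ are kernels of explicit $A$-linear maps $M^n\to M^{d^2}$ and $\Mat_d(M)\to M^{nd^2}$ whose matrices have entries among the $c^t_{is}$, and that these maps commute with localisation.

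Next, for the realisable submodules, recall that $M^n_\real$ is by definition the image of $\phi_M:A^n_\ad\otimes_A M\to M^n_\ad\subseteq M^n$. Applying the exact functor $S^{-1}(-)$ to this map, and using that localisation commutes with $\otimes_A$ and with the already-established isomorphism $S^{-1}(A^n_\ad)\cong(S^{-1}A)^n_\ad$, we get that $S^{-1}M^n_\real$ is the image of $\phi_{S^{-1}M}$, i.e. $(S^{-1}M)^n_\real$; one should note that $S^{-1}A^n_\ad$ being a pure submodule of $S^{-1}A^n$ (which holds since purity, being equivalent to flatness of the quotient here, is preserved) keeps $\phi_{S^{-1}M}$ injective, consistent with the set-up but not actually needed for the image computation. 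The $\Mat_d$ version in (ii) is the same argument with $\psi_M$ in place of $\phi_M$. Part (iii) is then immediate: if $M$ is vector-realisable then $M^n_\ad=M^n_\real$, localise both sides and use (i) to conclude $(S^{-1}M)^n_\ad=(S^{-1}M)^n_\real$; likewise for matrix-realisable via (ii).

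The only real point requiring a little care — and the step I expect to be the main (minor) obstacle — is checking that localisation genuinely commutes with all the functors in sight in a compatible way: that $S^{-1}A\otimes_A(\Lambda\otimes_A M)\cong(S^{-1}\Lambda)\otimes_{S^{-1}A}(S^{-1}M)$ naturally, that under this identification the localised structure map is the one built from $\tfrac{c^t_{is}}{1}$, and that the inclusion-induced maps in the statement are the ones coming from these natural isomorphisms. All of this is routine commutative algebra (flatness of $S^{-1}A$, the standard natural isomorphism $S^{-1}(M\otimes_A N)\cong S^{-1}M\otimes_{S^{-1}A}S^{-1}N$, and exactness of localisation), so the proof is essentially: "kernels and images commute with the exact functor $S^{-1}(-)$, and these submodules are kernels and images of maps defined over $A$." I would write it out for the matrix case and leave the vector case to the reader, exactly as in the proof of Lemma 2.5.
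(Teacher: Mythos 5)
The paper states Lemma 2.7 without proof, evidently regarding it as routine; your argument supplies the missing details correctly and in the way the surrounding text (in particular Remarks 2.2 and 2.4 and the definitions of $\phi_M$, $\psi_M$) is clearly set up to support. The key observations — that $M^n_\ad$ and $\Mat_d(M)_\ad$ are kernels of $A$-linear maps $M^n\to M^{d^2}$ and $\Mat_d(M)\to M^{nd^2}$ with matrices built from the structure constants $c^t_{is}$, that $M^n_\real$ and $\Mat_d(M)_\real$ are by definition images of $\phi_M$ and $\psi_M$, and that the exact functor $S^{-1}(-)$ commutes with kernels, images, and $\otimes_A$ — are exactly what is needed, and your use of the first part of (i) (resp.\ (ii)) with $M=A$ to identify $S^{-1}\phi_M$ with $\phi_{S^{-1}M}$ (resp.\ $S^{-1}\psi_M$ with $\psi_{S^{-1}M}$) closes the loop. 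Your aside about purity of $S^{-1}A^n_\ad$ in $S^{-1}A^n$ is correct and, as you say, not needed for the image computation; and part (iii) is indeed immediate from (i) and (ii). One trivial slip: the proof whose phrasing you are echoing (``we give the matrix version and leave the vector version to the reader'') is that of Lemma 2.6, not 2.5.
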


\q For an $A$-module $M$ and a maximal ideal $\M$ of $A$  we write $M_\M$ for the localisation $S^{-1}M$, where $S=A\backslash \M$.

\begin{proposition} Let $M$ be an $A$-module.    Then   $M$ is a vector-realisable (\resp matrix-realisable)  $A$-module  if and only if $M_\M$ is a vector-realisable (\resp matrix-realisable)  $A_\M$-module for every maximal ideal $\M$ of $A$.
\end{proposition}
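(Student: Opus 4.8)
The plan is to reduce the global statement to a purely local one using the standard principle that a morphism of modules over a commutative ring is an isomorphism if and only if it is so after localising at every maximal ideal. Concretely, consider the map $\phi_M : A^n_\ad \otimes_A M \to M^n_\ad$ (for the vector case; the matrix case uses $\psi_M$ identically). By Lemma 2.6, for any multiplicatively closed $S$ the inclusions induce natural isomorphisms $S^{-1}(A^n_\ad \otimes_A M) \cong S^{-1}M^n_\ad \xrightarrow{\sim} (S^{-1}M)^n_\ad$ and, since $A^n_\ad$ is finitely generated, $S^{-1}(A^n_\ad \otimes_A M) \cong (S^{-1}A^n_\ad) \otimes_{S^{-1}A} S^{-1}M \cong (S^{-1}A)^n_\ad \otimes_{S^{-1}A} S^{-1}M$. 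Under these identifications $S^{-1}\phi_M$ is precisely the map $\phi_{S^{-1}M}$ attached to the $S^{-1}A$-module $S^{-1}M$ with respect to the localised basis and structure constants $\tfrac{c^t_{is}}{1}$.

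The forward implication is immediate: if $M$ is vector-realisable then $\phi_M$ is surjective, hence $S^{-1}\phi_M = \phi_{M_\M}$ is surjective for $S = A \setminus \M$, so $M_\M$ is vector-realisable — this is exactly part (iii) of Lemma 2.6 applied to $S = A\setminus\M$. For the converse, suppose $M_\M$ is vector-realisable for every maximal ideal $\M$, so $\phi_{M_\M}$ is surjective for all $\M$. Let $C = \Coker(\phi_M) = M^n_\ad / M^n_\real$. Localising the right-exact sequence $A^n_\ad \otimes_A M \to M^n_\ad \to C \to 0$ at $\M$ and using the identification above, we get $C_\M = \Coker(\phi_{M_\M}) = 0$ for every maximal ideal $\M$. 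A module all of whose localisations at maximal ideals vanish is itself zero, so $C = 0$, i.e. $\phi_M$ is surjective and $M$ is vector-realisable. The matrix-realisable case is word-for-word the same with $A^n_\ad$, $M^n_\ad$, $\phi_M$ replaced by $\Mat_d(A)_\ad$, $\Mat_d(M)_\ad$, $\psi_M$, invoking parts (ii) and (iii) of Lemma 2.6.

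The only point requiring any care — and the step I would flag as the main obstacle — is checking that localisation commutes with the tensor product $A^n_\ad \otimes_A M$ in the way claimed, i.e. that $S^{-1}(A^n_\ad \otimes_A M) \cong (S^{-1}A^n_\ad) \otimes_{S^{-1}A} (S^{-1}M)$ and that this identifies $S^{-1}\phi_M$ with $\phi_{S^{-1}M}$. The first isomorphism is the standard compatibility of localisation with tensor products (localisation being itself a tensor/base-change functor, or directly because $A^n_\ad$ is finitely presented over the Noetherian ring $A$ when $A$ is a Dedekind domain, and trivially so when $A$ is a field). The identification of the localised map with $\phi_{S^{-1}M}$ is a routine diagram chase through the definitions: both are induced from the canonical isomorphism $A^n \otimes_A M \cong M^n$ by restriction, and localisation is compatible with this restriction precisely because $A^n_\ad$ is a pure (hence flat-quotient) submodule of $A^n$, which is what makes $\phi_M$ injective in the first place and what Lemma 2.6(i)-(ii) records. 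Everything else is formal.
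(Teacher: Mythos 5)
Your proof is correct and follows the same route as the paper: the forward direction is exactly the localisation lemma (the paper's Lemma 2.7(iii), which you cite as 2.6), and the converse proceeds by forming the quotient $M^n_\ad/M^n_\real$ (equivalently $\Coker\phi_M$), localising at each $\M$ via the same lemma's parts (i)–(ii), observing it vanishes locally, and concluding $N=0$ by the standard local-global principle. The extra worry you flag about localisation commuting with $A^n_\ad\otimes_A M$ is already packaged into Lemma 2.7 and in any case holds for all modules (localisation is base change, hence commutes with tensor), so no finite-presentation hypothesis is needed.
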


\begin{proof}   We give the matrix version.   

\q  If $M$ is matrix-realisable $A$-module and $\M$ is a maximal ideal of $A$  then $M_\M$ is a matrix-realisable $A_\M$-module by Lemma  2.7(iii).

\q Now suppose that for every maximal ideal $\M$ of $A$  the $A_\M$-module $M_\M$ is matrix-realisable.  Let $N=\Mat_d(M)_\ad/\Mat_d(M)_\real$. The short exact sequence 
$$0\to \Mat_d(M)_\real \to \Mat_d(M)_\ad \to N\to 0$$
 gives rise to  an exact sequence 
 $$0\to  (\Mat_d(M)_\real)_\M  \to (\Mat_d(M)_\ad)_\M\to N_\M\to 0$$
  and hence, by Lemma 2.7(ii),  an  exact sequence 
$$0\to \Mat_d(M_\M)_\real\to \Mat_d(M_\M)_\ad \to N_\M\to 0.$$
 Thus we have $N_\M=0$ for every maximal ideal $\M$ of $A$ and hence, by \cite[Proposition 3.8]{AM},  $N=0$, i.e. $\Mat_d(M)_\ad=\Mat_d(M)_\real$.
\end{proof}

\q We obtain the following criterion for all $A$-modules to be  realisable in terms of residue fields.

\begin{proposition}  If   $A/\M$ is a  vector-realisable (\resp matrix-realisable)  $A$-module, for every maximal ideal $\M$ of $A$,  then every $A$-module is vector-realisable (\resp  matrix-realisable).
\end{proposition}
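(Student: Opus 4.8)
The plan is to reduce the general statement to the local case already handled by Proposition 2.9. Since by Proposition 2.9 it suffices to show that each localisation $M_\M$ is vector-realisable (resp. matrix-realisable) over $A_\M$, I will first observe that $A_\M$ is a discrete valuation ring (being the localisation of a Dedekind domain at a maximal ideal, or a field), and that the hypothesis passes to the localisation: by Lemma 2.7(iii) applied to the $A$-module $A/\M$, the $A_\M$-module $(A/\M)_\M \cong A_\M/\M A_\M$ is vector-realisable (resp. matrix-realisable). Thus I would be reduced to proving the following: if $A$ is a discrete valuation ring whose residue field $A/\M$ is realisable, then every $A$-module is realisable.

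Next, over a discrete valuation ring every finitely generated module is a direct sum of copies of $A$ and copies of $A/\M^k$ for various $k\geq 1$. By Remark 2.6 direct sums of realisable modules are realisable, and the regular module $A$ is realisable, so it suffices to show each $A/\M^k$ is realisable. I would do this by induction on $k$: the case $k=1$ is the hypothesis, and for the inductive step I would use the short exact sequence
$$0\to \M^{k-1}/\M^k \to A/\M^k \to A/\M^{k-1}\to 0,$$
noting that $\M^{k-1}/\M^k \cong A/\M$ (as $A$ is a DVR, $\M^{k-1}$ is principal generated by $\pi^{k-1}$ and multiplication by $\pi^{k-1}$ gives such an isomorphism), so the left term is realisable by hypothesis and the right term by induction; then Lemma 2.5 gives that $A/\M^k$ is realisable. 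This settles all finitely generated $A$-modules.

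For a general (not necessarily finitely generated) $A$-module $M$, I would argue that $M$ is the directed union of its finitely generated submodules $M_\alpha$. An admissible vector (resp. matrix) over $M$ has only finitely many entries, each lying in some finitely generated submodule, so it is an admissible vector (resp. matrix) over some $M_\alpha$; since $M_\alpha$ is realisable, it lies in the image of $\phi_{M_\alpha}$ (resp. $\psi_{M_\alpha}$), and functoriality of $\phi$ (resp. $\psi$) in the module argument then shows it is realisable over $M$. Equivalently, one observes that $\phi$ and $\psi$ commute with directed colimits in $M$ because tensor product does and $A^n_\ad$, $\Mat_d(A)_\ad$ are fixed, while the admissible-vector (resp. admissible-matrix) functor also commutes with directed colimits (being a finite limit of the colimit-preserving functor $M\mapsto M^n$, resp. $M\mapsto\Mat_d(M)$); hence surjectivity of $\phi$, $\psi$ on all finitely generated modules forces it on all modules.

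The main obstacle is the passage from finitely generated to arbitrary modules over a discrete valuation ring: one must be careful that the structure theorem only classifies finitely generated modules, so the colimit argument of the last paragraph is essential rather than cosmetic. A minor secondary point to verify carefully is that localisation genuinely commutes with the $(-)_\ad$ and $(-)_\real$ constructions (Lemma 2.7), which we are free to invoke, and that $A_\M/\M A_\M$ really is the residue field appearing in the hypothesis — this is immediate but should be stated. Apart from these, the argument is a routine assembly of Lemma 2.5, Remark 2.6, Lemma 2.7, and Proposition 2.9.
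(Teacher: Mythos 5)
Your argument is correct, but it takes a genuinely different route from the paper's once the outer reductions are done. Both proofs use the localisation criterion (Proposition 2.8) to reduce to $A$ a discrete valuation ring, and both reduce to finitely generated modules --- you by a directed-colimit argument, the paper by passing to the submodule generated by the entries of the admissible matrix under consideration, which is the same idea. From there the paper avoids the structure theorem: it splits off the torsion-free (hence free) part, filters the torsion submodule by powers of $\M$, reduces to an arbitrary finite-dimensional $A/\M$-vector space $M$, and handles that case by a two-step argument, first expressing an admissible $A$-matrix over $M$ in terms of $F$-admissible matrices (using that every module over the field $F=A/\M$ is realisable over $F$) and then rewriting those via the hypothesis that $F$ is realisable over $A$. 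You instead apply the structure theorem for finitely generated modules over a PID to write $M$ as a direct sum of copies of $A$ and of $A/\M^k$, observe that the regular module is realisable, and induct on $k$ using the short exact sequence $0\to\M^{k-1}/\M^k\to A/\M^k\to A/\M^{k-1}\to 0$ together with the isomorphism $\M^{k-1}/\M^k\cong A/\M$. Your version is shorter once the structure theorem is taken as given; the paper's version stays inside its own elementary realisability toolkit and its base-case argument applies to $A/\M$-vector spaces of arbitrary dimension, though that generality is not needed here. A small bookkeeping note: the short-exact-sequence lemma you invoke is Lemma 2.6 in the paper and the direct-sum/regular-module observation is Remark 2.5; your citations have these two swapped, though the content is correct.
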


\begin{proof}    As usual we give the matrix version.  By Proposition 2.8 and Remark 2.5,  we may assume that $A$ is local, i.e. $A$ is a discrete valuation ring, with maximal ideal $\M$, say.   Let $M$ be an $A$-module and let 
$(m_{st})\in \Mat_d(M)_\ad$. We let $M_0$ be the $A$-submodule generated by the elements $m_{st}$, $1\leq s,t\leq d$. If $M_0$ is  matrix-realisable then the matrix  $(m_{st})$ is realisable. Thus we may assume that $M$ is finitely generated.

\q Let $T(M)$ be the torsion submodule of $M$. Then $M/T(M)$ is a free $A$-module and so is  matrix-realisable by Remark 2.5.  Hence, by Lemma 2.6, it suffices to prove that $T(M)$ is matrix-realisable, i.e. we may assume that $M$ is a finitely generated  torsion module.

\q We choose a positive integer  $k$ such that $\M^k M=0$.  If $\M^{i-1}M /\M^i M$ is  matrix-realisable, for $1\leq i\leq k$, then so is $M$, by Lemma 2.6.  So we may assume $\M M=0$, and so $M$ is an $A/\M$ vector space.  Hence $M$ is matrix-realisable as an $F=A/\M$-module.  

\q Consider $(m_{st})\in \Mat_d(M)_\ad$.   Then, for some $u$ there exists admissible matrices  $(f^i_{st})$, with entries in $F$,  and $m_i\in M$, for $1\leq i\leq u$, such that   $m_{st}=\sum_{i=1}^u f^i_{st} m_i$, for all $1\leq s,t\leq d$. By hypothesis $F$ is a matrix-realisable  $A$-module, so that, for $1\leq i\leq r$, there exists an  integer $v_i$ and admissible matrices  $(a^{ij}_{st})$ (with entries in $A$)  and $g_{ij}\in F$, for $1\leq j\leq v_i$, such that $f^i_{st}=\sum_{j=1}^{v_i}  a^{ij}_{st} g_{ij}$,  for $1\leq i\leq r$. Hence, for all $1\leq s,t\leq d$,  we have
  $$m_{st}=\sum_{i=1}^u f^i_{st} m_i=\sum_{i,j} a^{ij}_{st}g_{ij}m_i=\sum_{i,j} a^{ij}_{st} x_{ij}$$
  where $x_{ij}=g_{ij} m_i$, which shows that $(m_{st})$ is realisable, and completes the proof.
 \end{proof}

 \begin{remark}  Suppose that $\phi:A\to F$ is a homomorphism from $A$ to a field $F$  and $K$ is a field extension.  We note that $F$ is vector-realisable (\resp 
 matrix-realisable) if and only if $K$ is.  We shall deal with the matrix version.  We choose a basis $b_y$, $y\in Y$, of $K$ as an $F$-space, such that $b_{y_0}=1$ for some $y_0\in Y$.  
 
 \q Suppose that $F$ is matrix realisable and let $(k_{st})\in \Mat_d(K)_\ad$.  Then  for some finite subset $Y_1$ of $Y$ we have, for all $1\leq s,t\leq d$,  equations
 $$k_{st}=\sum_{y\in Y_1} f_{st}^y b_y$$
 for some $f_{st}^y\in F$. Moreover, for each $y\in Y_1$, the matrix $(f^y_{st})$ belongs to $\Mat_d(F)_\ad$.  Hence, for each $y\in Y_1$, there exists a positive integer $h_y$, elements $\alpha^{yi}_{st}\in F$, such that, for fixed $y$ and $i$ the matrix $(\alpha^{yi}_{st})$ belongs to $\Mat_d(F)_\ad$,  and elements $x_{yi} \in K$, for $1\leq i\leq h_y$, such that 
 $$f^y_{st}=\sum_{i=1}^{h_y} \alpha^{yi}_{st} x_{yi}.$$
 Hence we have 
 $$k_{st}=\sum_{y\in Y_1, 1\leq i\leq h_y}\alpha^{yi}_{st} x_{yi} b_y=\sum_{y\in Y_1, 1\leq i\leq h_y} \alpha^{yi}_{st} t_{yi}$$
 where $t_{yi}=x_{yi}b_y$,  which shows that $(k_{st})$ is realisable, and hence $K$ is a matrix-realisable  $A$-module. 
 
 \q  Conversely suppose  that $K$ is a realisable  $A$-module. We consider $(f_{st})\in \Mat_d(F)_\ad$. Then $(f_{st})\in \Mat_d(K)_\ad$.  Hence there exists a positive integer $h$, elements $(a_{st}^i)$ of $\Mat_d(A)_\ad$, and $k_1,\ldots,k_h\in K$ such that 
 $$f_{st}=\sum_{i=1}^h a^i_{st} k_i$$
 for $1\leq s,t\leq d$.  We write $k_i=\sum_{y\in Y} \beta_{iy} b_y$, as an $F$-linear combination of the basis elements $b_y$, for $1\leq i\leq h$.  Thus  we get 
  $$f_{st}=\sum_{i=1,\ldots,h,y\in Y} a^i_{st} \beta_{iy} b_y$$
  and so, equating coefficients of $y_{b_0}=1$ we get 
 $$f_{st}=\sum_{i=1}^h a^i_{st} \beta_{iy_0}$$
 for $1\leq s,t\leq d$, which shows that $(f_{st})$ is realisable and hence $F$ is a realisable $A$-module. 
 
 \q It follows that, in the context of Proposition 2.9, every $A$-module is vector-realisable (\resp matrix-realisable) provided that, for each maximal ideal $\M$ of $A$ there is a field $F$ containing $A/\M$ which is vector-realisable (\resp matrix realisable) as an $A$-module.
 \end{remark}

 \begin{remark} Suppose that $A=F[t,t^{-1}]$ is the  ring of Laurent polynomials over a field $F$. Let $K$ be a field extension of $F$. Then we obtain, by base change $A_K=K[t,t^{-1}]$, the $A_K$-algebra $\Lambda_K=K\otimes_F\Lambda$ and the $\Lambda_K$-module $P_K=K\otimes_F P$.  Then  by arguments similar to those of the above remark, we see that an $A$-module $M$ is  vector-realisable (\resp matrix realisable) if and only if   the $A_K$-module $M_K=K\otimes_F M$   is a vector-realisable (\resp matrix realisable) $A_K$-module.  We leave the details to the interested reader. 
 \end{remark}

 \begin{remark}   Let $B$ be an $A$-algebra.  The natural map 
 $$\Phi_B: B\otimes_A \Ann_\Lambda(P)\to \Ann_{\Lambda_B}(P_B)$$
 admits a factorisation of $A$-module maps 
 $$B\otimes_A \Ann_\Lambda(P)  \xrightarrow{\alpha}    A^n_\ad\otimes_A B \xrightarrow{\beta} B^n_\ad \xrightarrow{\gamma}  \Ann_{\Lambda_B}(P_B)$$
 where $\alpha:B\otimes_A \Ann_\Lambda(P)  \to    A^n_\ad\otimes_A B$ satisfies \\ 
 $\alpha(b\otimes\sum_{i=1}^n a_i\lambda_i)=(a_1,\ldots,a_n)\otimes b$, for $b\in B$, $\sum_i a_i\lambda_i \in \Ann_\Lambda(P)$, where \\
 $\beta: A^n_\ad\otimes_A B \to  B^n_\ad$ satisfies $\beta((a_1,\ldots,a_n)\otimes b)=(a_1b,\ldots,a_nb)$, for $(a_1,\ldots,a_n)\in A^n_\ad$, $b\in B$, and where 
 $\gamma:B^n_\ad  \to  \Ann_{\Lambda_B}(P_B)$ satisfies $\gamma(b_1,\ldots,b_n)=\sum_{i=1}^n b_i\otimes \lambda_i$, for $(b_1,\ldots,b_n)\in B^n_\ad$.
 
 \q Moreover, the maps $\alpha$ and $\gamma$ are isomorphisms so that $\Phi_B$ is an isomorphism if and only if  $\beta$ is an isomorphism i.e.  if and only if $B$ is a vector-realisable $A$-module.

 \q Likewise, the natural map 
 $$\Psi_B:B^\op \otimes_A \End_\Lambda(P)\to \End_{\Lambda_B}(P_B)$$
  admits a factorisation of $A$-module maps 
  $$B^\op \otimes_A \End_\Lambda(P)\xrightarrow{\delta} \Mat_d(A)_\ad\otimes_A B \xrightarrow{\ep} \Mat_d(B)_\ad \xrightarrow{\eta} \End_{\Lambda_B}(P_B)$$
 where $\de:  B^\op\otimes_A \End_A(P)\to \Mat_d(A)_\ad\otimes_A B$ satisfies $\de(b\otimes \sigma)=(\sigma_{st})\otimes b$, for $b\in B$, $\sigma=(\sigma_{st})\in  \End_A(P)$, where $\ep: 
  \Mat_d(A)_\ad\otimes_A B \to \Mat_d(B)_\ad$ satisfies $\ep((\sigma_{st})\otimes b)=(\sigma_{st}b)$,  for $(\sigma_{st})\in \Mat_d(A)_\ad$, $b\in B$, and where $\eta:
  \Mat_d(B)_\ad \to  \End_{\Lambda_B}(P_B)$ satisfies $\eta((b_{st}))(b\otimes p_u)=\sum_t bb_{tu}\otimes p_t$,  for $(b_{st})\in \Mat_d(B)_\ad$, $b\in B$, $1\leq u\leq d$.
  
   \q Moreover, the maps $\delta$ and $\eta$ are isomorphisms so that $\Psi_B$ is an isomorphism if and only if $\ep$ is, i.e.  if and only if $B$ is a matrix realisable $A$-module.
  \end{remark}
  
  \q From Proposition 2.9 and Remark 2.10 we thus get the following result.

 \begin{corollary}  Suppose that $\Lambda$ is an $A$-algebra which is free of finite rank over $A$ and  that $P$ is a $\Lambda$-module which is free of finite rank over $A$. 
 
 (i) Suppose that for every maximal idea $\M$ there exists a homomorphism from $A$ to a field  $F$ with kernel $\M$ such that  natural map 
 $F\otimes_A \Ann_\Lambda(P)   \to \Ann_{\Lambda_F}(P_F)$ is surjective. Then for any $A$-algebra $B$ the natural map
 $$\Phi_B: B\otimes_A \Ann_\Lambda(P)\to \Ann_{\Lambda_B}(P_B)$$
 is an isomorphism.

 (ii)  Suppose that for every maximal idea $\M$ there exists a homomorphism from $A$ to a field  $F$ with kernel $\M$ such that  natural map  $F\otimes_A \End_\Lambda(P)\to \End_{ \Lambda_F}(P_F)$ is surjective. Then for any $A$-algebra $B$ the natural map
 $$\Psi_B:B^\op\otimes_A \End_\Lambda(P)\to \End_{\Lambda_B}(P_B)$$
 is an isomorphism.
 \end{corollary}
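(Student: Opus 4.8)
The plan is to obtain the Corollary by combining the factorisations of $\Phi_B$ and $\Psi_B$ from Remark 2.12 with the residue-field criterion of Proposition 2.9, together with the transfer observation at the end of Remark 2.10. Cases (i) and (ii) are formally identical — with $A^n_\ad$, $\phi_M$, $\beta$ in (i) playing the roles of $\Mat_d(A)_\ad$, $\psi_M$, $\ep$ in (ii) — so I describe them in parallel, writing ``realisable'' for ``vector-realisable'' in (i) and for ``matrix-realisable'' in (ii); note that in (ii) no transpose difficulty arises because $F^\op = F$ for a field $F$.

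First I would unpack the hypothesis. Fix a maximal ideal $\M$ and let $A\to F$ be the given homomorphism with kernel $\M$ for which $\Phi_F$ (resp. $\Psi_F$) is surjective. By Remark 2.12 this natural map factors as $\gamma\circ\beta\circ\alpha$ (resp. $\eta\circ\ep\circ\de$) in which the outer two maps are isomorphisms, and the middle map $\beta$ (resp. $\ep$) — which is the map $\phi_F$ (resp. $\psi_F$) — is injective because $A^n_\ad$ (resp. $\Mat_d(A)_\ad$) is a pure submodule of $A^n$ (resp. $\Mat_d(A)$). Hence surjectivity of $\Phi_F$ (resp. $\Psi_F$) forces the middle map to be an isomorphism, which says precisely that $F$ is a realisable $A$-module.

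Next I would globalise. Since $A\to F$ has kernel $\M$, the field $F$ is an extension of $A/\M$; as this holds for every maximal ideal, the concluding statement of Remark 2.10 (which rests on Proposition 2.9 and the fact that realisability passes between a field and any of its extensions) shows that every $A$-module is realisable. In particular the $A$-algebra $B$ is a realisable $A$-module. Finally, applying Remark 2.12 once more with $B$ in place of $F$, the middle map of the factorisation of $\Phi_B$ (resp. $\Psi_B$) is an isomorphism, and since the outer maps always are, $\Phi_B$ (resp. $\Psi_B$) is an isomorphism.

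I do not expect a genuine obstacle: the corollary is a bookkeeping synthesis of the machinery already in place. The two points worth a sentence of care are that surjectivity of $\Phi_F$ / $\Psi_F$ already yields \emph{realisability} of $F$ (because injectivity of the relevant middle map is automatic from purity, so ``onto'' upgrades to ``bijective''), and that the hypothesis is stated for an auxiliary field $F$ rather than for $A/\M$ itself, which is why Remark 2.10 is invoked to descend realisability from $F$ to $A/\M$ before Proposition 2.9 can be applied.
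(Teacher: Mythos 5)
Your proposal is correct and is essentially the same argument the paper intends: the paper states that Corollary 2.13 follows from Proposition 2.9 and Remark 2.10 via the factorisations of Remark 2.12, and you spell out exactly that chain, including the small but necessary observation that surjectivity of $\Phi_F$ (resp.\ $\Psi_F$) upgrades to realisability of $F$ because the middle map $\phi_F$ (resp.\ $\psi_F$) is already injective by purity.
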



\section{Double Centralisers}

\q  We continue to assume that $A$ is a Dedekind domain or a field,  that $R$ is an  $A$-algebra and  that $P$ an $R$ module.

\bs

\it Hypotheses:   \rm We shall require the following properties: 

(i)   the $A$-algebra $R$  and the $R$-module $P$ are free of finite rank over $A$;

(ii)  $\End_R(P)$ is  free of finite rank over $A$; and 

(iii)   $\End_R(P)$  has the base change property on endomorphism algebras, i.e. the map 
$\Psi_B: B^\op \otimes_A \End_R(P) \to \End_{R_B}(P_B)$
 is an isomorphism, for every $A$-algebra $B$.
 
 \bs
 
 \q When these properties hold we shall say that the triple $(A,R,P)$ satisfies the standard hypotheses.

\q Note that, by Corollary 2.13,  given (i) and (ii), the final property (iii)   holds provided that it holds over residue fields,  i.e.  provided that   for each residue field $F$   the natural map $F\otimes_A \End_R(P)\to \End_{R_F}(P_F)$ is surjective, and therefore an isomorphism.

\begin{example} (i)  Let $G$ be a finite group and let $X$ be a  finite $G$-set.    Let $S$ be  a commutative ring,  Then we have the  permutation module  $SX$. For an orbit $\O$ of $G$ on $X\times X$ we have the $SG$-module endomorphism $\alpha_\O$ of $SX$ satisfying
$\alpha_\O(x)=\sum_{(y,x)\in \O} y$,  and these elements form an $S$-basis of $\End_{SG}(SX)$.   In particular $\Lambda=\End_{\zed G}(\zed X)$ is free of finite rank over $\zed$.  It follows that the $\zed G$-module $\zed X$ has the base change property on endomorphism algebras and that $(\zed,\zed G,\zed X)$ satisfies the standard hypotheses.
\end{example}

\begin{example} Let $F$ be a field and $A=F[t,t^{-1}]$.  Let $n$ be a positive integer and $R=\Hec(n)_{A,t}$ be the corresponding Hecke algebra of type $A$.  By  work of  Dipper and James, \cite{DJ1}, Theorem 3.3(i),   if $P$ is a  (quantised) Young permutation module for $R$, then $(A,R,P)$ satisfies the standard hypotheses. This is a key example for us  and we shall give more details, in Section 5,  when we have more notation available.
\end{example}

\q We will assume until the end of this section that the triple $(A,R,P)$ satisfies the standard hypotheses.

\q Applying  Corollary 2.13  with $\Lambda=\End_R(P)$  we get the following.

\begin{proposition}  Suppose that for every residue field $F$ of $A$ the natural map $F\otimes_A \DEnd_R(P)\to \DEnd_{R_F}(P_F)$ is surjective. Then for any $A$-algebra $B$ the natural map
 $$B^\op\otimes_A \DEnd_R(P)\to \DEnd_{R_B}(P_B)$$
 is an isomorphism.

\end{proposition}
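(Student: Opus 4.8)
The plan is to deduce this directly from Corollary 2.13(ii) applied with $\Lambda = \End_R(P)$, treating $P$ as a $\Lambda$-module. First I would check that the triple $(A, \Lambda, P)$ with $\Lambda = \End_R(P)$ satisfies the input hypotheses of Corollary 2.13. By the standing standard hypotheses, $R$ and $P$ are free of finite rank over $A$ (hypothesis (i)), and $\End_R(P)$ is free of finite rank over $A$ (hypothesis (ii)); so $\Lambda$ is free of finite rank over $A$ and $P$ is a $\Lambda$-module free of finite rank over $A$. Thus Corollary 2.13 is applicable to $(A, \Lambda, P)$, and $\DEnd_R(P) = \End_\Lambda(P)$ by definition of the double centraliser. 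Hence $\DEnd_{R_B}(P_B) = \End_{\Lambda_B}(P_B)$ — but here one must be careful: a priori the relevant $\Lambda$ for $P_B$ is $\End_{R_B}(P_B)$, not $\Lambda_B = B \otimes_A \End_R(P)$. This is exactly where standard hypothesis (iii) enters: it asserts $\Psi_B : B^\op \otimes_A \End_R(P) \to \End_{R_B}(P_B)$ is an isomorphism, so the two candidate algebras $\Lambda_B$ and $\End_{R_B}(P_B)$ are identified (as algebras, via $\Psi_B$), and consequently their centralisers on $P_B$ coincide: $\End_{\Lambda_B}(P_B) = \End_{\End_{R_B}(P_B)}(P_B) = \DEnd_{R_B}(P_B)$.

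With this identification in place, the statement becomes: if $F \otimes_A \End_\Lambda(P) \to \End_{\Lambda_F}(P_F)$ is surjective for every residue field $F$, then $B^\op \otimes_A \End_\Lambda(P) \to \End_{\Lambda_B}(P_B)$ is an isomorphism for every $A$-algebra $B$. That is precisely Corollary 2.13(ii) for the triple $(A, \Lambda, P)$. So I would simply invoke it. One should note that the hypothesis given in the Proposition — surjectivity of $F \otimes_A \DEnd_R(P) \to \DEnd_{R_F}(P_F)$ — matches the hypothesis of Corollary 2.13(ii) for $(A,\Lambda,P)$ only after the identification $\DEnd_{R_F}(P_F) = \End_{\Lambda_F}(P_F)$, which again follows from standard hypothesis (iii) applied with $B = F$.

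I expect the main obstacle to be purely bookkeeping: making sure the identification $\Lambda_B \cong \End_{R_B}(P_B)$ supplied by $\Psi_B$ is compatible with the $\Lambda$-module (equivalently $\End_{R_B}(P_B)$-module) structures on $P_B$, so that "commuting with all of $\Lambda_B$" and "commuting with all of $\End_{R_B}(P_B)$" genuinely define the same subset of $\End_A(P_B)$. Concretely, one checks that for $b \otimes \alpha \in B^\op \otimes_A \End_R(P)$ the action of $\Psi_B(b \otimes \alpha)$ on $P_B$ agrees with the action of $b \otimes \alpha$ coming from the $\Lambda_B = B \otimes_A \Lambda$ module structure on $P_B = B \otimes_A P$; this is immediate from the formula $\Psi_B(b \otimes \alpha)(b' \otimes p) = b'b \otimes \alpha(p)$ recorded in the introduction, together with the observation that $\End_R(P)$ acts on $P$ on the left while $B^\op$ is used so that the tensor-product action is well defined. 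Once this compatibility is noted, the Proposition is a formal consequence of Corollary 2.13(ii), and no further computation is required.
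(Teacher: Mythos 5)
Your proof is correct and takes essentially the same approach as the paper, whose entire argument is the one-line remark preceding Proposition~3.3: ``Applying Corollary~2.13 with $\Lambda=\End_R(P)$ we get the following.'' Your more careful unpacking of the identifications $\DEnd_{R_B}(P_B)=\End_{\Lambda_B}(P_B)$ via standard hypothesis~(iii) is exactly the bookkeeping the paper leaves implicit.
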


\q Let $K$ be the field of fractions of $A$.

\begin{lemma} Let $F$ be a residue field of $A$.

(i) We have $\dim_F \Ann_{R_F}(P_F)\geq \dim_K \Ann_{R_K}(P_K)$ with equality if and only if the natural map $F\otimes_A \Ann_R(P)\to \Ann_{R_F}(P_F)$ is an isomorphism.

(ii) We have $\dim_F \DEnd_{R_F}(P_F)\geq \dim_K \DEnd_{R_F}(P_F)$ with equality if and only if the natural map $F\otimes_A \DEnd_R(P) \to  \DEnd_{R_F}(P_F)$ is an isomorphism.
\end{lemma}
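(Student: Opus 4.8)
The plan is to treat parts (i) and (ii) uniformly by means of the following general fact: if $N$ is a finitely generated module over a Dedekind domain $A$ with field of fractions $K$, and $N$ is torsion-free (equivalently, embeds in a free module, equivalently is $A$-flat), then for every residue field $F = A/\M$ one has $\dim_F (F \otimes_A N) = \dim_K (K \otimes_A N) = \operatorname{rank}_A N$; whereas for a general finitely generated $N'$ one only has $\dim_F(F \otimes_A N') \geq \operatorname{rank}_A N'$, with equality for all residue fields precisely when the torsion submodule $T(N')$ vanishes. The subtlety here is that we do not want to apply this to $\Ann_R(P)$ directly — it need not be $A$-pure in $R$ — so instead I would apply it to the \emph{quotient} $Q = R / \Ann_R(P)$, which embeds into $\End_A(P)$ via the representation $\rho$, hence is torsion-free and therefore free of finite rank over $A$. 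The point is that base change is well-behaved for the purely embedded quotient, and one then reads off the behaviour of the annihilator by a dimension count.

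Concretely, for part (i): since $Q = R/\Ann_R(P)$ is $A$-free, the short exact sequence $0 \to \Ann_R(P) \to R \to Q \to 0$ stays exact after applying $F \otimes_A -$ (as $\operatorname{Tor}_1^A(F, Q) = 0$), giving $0 \to F \otimes_A \Ann_R(P) \to R_F \to Q_F \to 0$ with $Q_F$ of dimension $\operatorname{rank}_A Q = \operatorname{rank}_A R - \operatorname{rank}_A \Ann_R(P) = \dim_K R_K - \dim_K \Ann_{R_K}(P_K)$. On the other hand the natural surjection $F \otimes_A \Ann_R(P) \twoheadrightarrow \Ann_{R_F}(P_F)$ composed with the inclusion $\Ann_{R_F}(P_F) \hookrightarrow R_F$ has image exactly the kernel of $R_F \to \End_F(P_F)$; but this kernel contains $F\otimes_A\Ann_R(P)$, and the induced map $Q_F \to \End_F(P_F)$ need not be injective, so $\dim_F \Ann_{R_F}(P_F) = \dim_F R_F - \operatorname{rank}(\rho_F) \geq \dim_F R_F - \dim_F Q_F = \dim_K \Ann_{R_K}(P_K)$, using that the rank of $\rho_F : R_F \to \End_F(P_F)$ is at most $\dim_F Q_F$ and equals $\dim_K \operatorname{im}(\rho_K) = \dim_K Q_K$ over $K$. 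Equality $\dim_F \Ann_{R_F}(P_F) = \dim_K \Ann_{R_K}(P_K)$ thus holds iff $\rho_F$ has rank $\dim_F Q_F$, i.e. iff $Q_F \to \End_F(P_F)$ is injective, i.e. iff $F \otimes_A \Ann_R(P) \to \Ann_{R_F}(P_F)$ is an isomorphism (it is always surjective, and this forces the kernel to be zero by comparing dimensions). This is where the standard hypotheses are not even needed; (i) is essentially formal.

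For part (ii), the analogous argument needs the input that $\End_R(P)$ is $A$-free (hypothesis (ii)) and that $\DEnd_R(P) = \End_{\End_R(P)}(P)$ can be computed via the matrix description of Section 2. I would set $\Lambda = \End_R(P)$ and recall from Remark 2.4 and Corollary 2.13 that $\DEnd_{R_F}(P_F) = \End_{\Lambda_F}(P_F)$ is naturally identified with $\Mat_d(F)_\ad$ and that the natural map $F \otimes_A \DEnd_R(P) \to \DEnd_{R_F}(P_F)$ is the injective map $\psi_F : \Mat_d(A)_\ad \otimes_A F \to \Mat_d(F)_\ad$ of that section; likewise over $K$. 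Since $\Mat_d(A)_\ad$ is an $A$-pure submodule of $\Mat_d(A)$ and $A$ is a Dedekind domain, $\Mat_d(A)_\ad$ is $A$-free, so $\dim_F(\Mat_d(A)_\ad \otimes_A F) = \operatorname{rank}_A \Mat_d(A)_\ad = \dim_K \Mat_d(K)_\ad = \dim_K \DEnd_{R_K}(P_K)$. Hence $\dim_F \DEnd_{R_F}(P_F) = \dim_F \Mat_d(F)_\ad \geq \dim_F(\Mat_d(A)_\ad \otimes_A F) = \dim_K \DEnd_{R_K}(P_K)$, with equality iff $\psi_F$ is surjective, which (being already injective) is exactly the statement that $F \otimes_A \DEnd_R(P) \to \DEnd_{R_F}(P_F)$ is an isomorphism. (Here I read "$\dim_K\DEnd_{R_F}(P_F)$" in the statement as the evident typo for $\dim_K\DEnd_{R_K}(P_K)$.)

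I expect the main obstacle to be purely bookkeeping: making sure the identifications of $\DEnd_{R_F}(P_F)$ with $\Mat_d(F)_\ad$ and of the base-change map with $\psi_F$ are correctly invoked from Remark 2.4, and confirming that $\Mat_d(A)_\ad$ and $R/\Ann_R(P)$ are genuinely $A$-free (rather than merely torsion-free) — which holds because finitely generated torsion-free modules over a Dedekind domain are projective, hence locally free, and we only ever need the rank, which localises. Once these structural facts are in place the inequalities and the equality conditions drop out of a dimension count and the injectivity of $\phi_F$, $\psi_F$ already established in Section 2.
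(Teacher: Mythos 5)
Your proof is substantively correct and uses essentially the same core facts as the paper's (very terse) proof: $\Ann_R(P)$ and $\DEnd_R(P)$ are $A$-submodules of free modules, hence finitely generated torsion-free over the Dedekind domain $A$, hence projective with rank equal to the corresponding $K$-dimension; and the natural base-change maps to the $F$-versions are \emph{injective}, from which the inequalities and the equality criteria fall out. For (i) you take a somewhat more circuitous route through the quotient $Q=R/\Ann_R(P)$ where the paper simply reads off the rank of $\Ann_R(P)$ directly, but either way works; for (ii) (where the paper only says ``Similar'') your route through the explicit identifications $\DEnd_R(P)\cong\Mat_d(A)_\ad$ and $\DEnd_{R_F}(P_F)\cong\Mat_d(F)_\ad$ with the injective map $\psi_F$ of Section~2 is a perfectly valid way to make ``similar'' precise, though the identification $\DEnd_{R_F}(P_F)=\End_{\Lambda_F}(P_F)$ that you need is really coming from hypothesis~(iii) of the standard hypotheses (the base-change isomorphism $\End_{R_F}(P_F)\cong\Lambda_F$), not from Remark~2.4 or Corollary~2.13 alone.

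There is one genuine error in the write-up of (i) that you should fix: you twice call the natural map $F\otimes_A\Ann_R(P)\to\Ann_{R_F}(P_F)$ a ``surjection'' and assert ``it is always surjective, and this forces the kernel to be zero by comparing dimensions.'' This is exactly backwards. Since $R/\Ann_R(P)$ embeds in $\End_A(P)$ it is torsion-free, hence $A$-flat, so $\operatorname{Tor}_1^A(F,R/\Ann_R(P))=0$ and the map $F\otimes_A\Ann_R(P)\to R_F$ is \emph{injective}, with image contained in (but not a priori equal to) $\Ann_{R_F}(P_F)$. What the dimension count gives you is that equality $\dim_F\Ann_{R_F}(P_F)=\dim_K\Ann_{R_K}(P_K)$ forces this injection to be onto, hence an isomorphism. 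Surjectivity is precisely the base-change property being studied, not something given; if it were always surjective, the equivalence in the lemma would be vacuous. The earlier sentence claiming the composite has image ``exactly'' the kernel of $R_F\to\End_F(P_F)$ is also wrong for the same reason (the image is only contained in that kernel), and you in fact contradict it in the very next clause. The underlying inequality $\dim_F\Ann_{R_F}(P_F)\geq h$ that you derive is correct and uses the correct (injectivity) direction, so the argument is salvageable, but as written the logic at the equality step is reversed.
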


\begin{proof} (i) Let $h=\dim_K \Ann_{R_K}(P_K)$.   We have the isomorphism  $K\otimes_A \Ann_R(P) \to \Ann_{R_K}(P_K)$   so the torsion free rank of $\Ann_R(P)$ is $h$. Now we have the injective map $F\otimes_A \Ann_R(P)\to \Ann_{R_F}(P_F)$ so that \\
$\dim_F \Ann_{R_F}(P_F)\geq h$ with equality if and only  $F\otimes_A \Ann_R(P)\to \Ann_{R_F}(P_F)$ is an isomorphism.

(ii) Similar.
\end{proof}

\begin{lemma}   Suppose that $R_K$ is semisimple. Let $F$ be a residue field of $A$.  If $P_F$ has the double centraliser property then the natural maps
$$F\otimes_A \Ann_R(P) \to \Ann_{R_F}(P_F)  \q  \hbox{and} \q    F\otimes_A \DEnd_R(P)\to \DEnd_{R_F}(P_F)$$
are isomorphisms.
\end{lemma}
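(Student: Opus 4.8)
The plan is to combine a dimension count over the field of fractions $K$ with the semisimplicity of $R_K$, and to pass to $F$ via the base change maps already set up in Section 2. First I would observe that since $R_K$ is semisimple, the $R_K$-module $P_K$ is semisimple, and hence automatically has the double centraliser property (by the classical Wedderburn/Artin double centraliser theorem for semisimple algebras). This gives an exact sequence $0\to \Ann_{R_K}(P_K)\to R_K\to \DEnd_{R_K}(P_K)\to 0$, so that
\[
\dim_K R_K = \dim_K \Ann_{R_K}(P_K) + \dim_K \DEnd_{R_K}(P_K).
\]
By hypothesis $P_F$ also has the double centraliser property, so the analogous exact sequence over $F$ gives
\[
\dim_F R_F = \dim_F \Ann_{R_F}(P_F) + \dim_F \DEnd_{R_F}(P_F).
\]
Since $R$ is free of finite rank over $A$ (standard hypothesis (i)), we have $\dim_F R_F = \dim_K R_K = \operatorname{rank}_A R$, so the two right-hand sides are equal.

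Next I would invoke Lemma 3.5: part (i) gives $\dim_F \Ann_{R_F}(P_F)\geq \dim_K \Ann_{R_K}(P_K)$, and part (ii) gives $\dim_F \DEnd_{R_F}(P_F)\geq \dim_K \DEnd_{R_K}(P_K)$. (Here one should note that the statement of Lemma 3.5(ii) as printed reads ``$\dim_K \DEnd_{R_F}(P_F)$'' on the right, which must be a typo for $\dim_K \DEnd_{R_K}(P_K)$; I would use the evidently intended inequality.) Adding these two inequalities and using the two displayed equalities above forces both of them to be equalities:
\[
\dim_F \Ann_{R_F}(P_F) = \dim_K \Ann_{R_K}(P_K), \qquad
\dim_F \DEnd_{R_F}(P_F) = \dim_K \DEnd_{R_K}(P_K).
\]
Indeed, if either inequality were strict, the sum of the left-hand sides would strictly exceed the sum of the right-hand sides, contradicting $\dim_F R_F = \dim_K R_K$.

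Finally, by the ``equality'' clauses of Lemma 3.5(i) and (ii), these equalities are exactly equivalent to the assertions that the natural maps $F\otimes_A \Ann_R(P)\to \Ann_{R_F}(P_F)$ and $F\otimes_A \DEnd_R(P)\to \DEnd_{R_F}(P_F)$ are isomorphisms, which is what we wanted. I do not expect a serious obstacle here: the only point requiring care is that one must have the double centraliser exact sequence available over both $K$ and $F$ — over $K$ it comes for free from semisimplicity of $R_K$, and over $F$ it is the hypothesis. One should also check that $\DEnd$ is computed with respect to the \emph{same} endomorphism algebra after base change, i.e. that $\End_{R_F}(P_F)\cong F\otimes_A\End_R(P)$; but this is precisely standard hypothesis (iii), so the double centraliser over $F$ is genuinely $\DEnd_{R_F}(P_F)$ and the bookkeeping is consistent.
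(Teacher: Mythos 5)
Your proof is correct and follows essentially the same route as the paper's: semisimplicity of $R_K$ gives the double centraliser property for $P_K$ (the paper cites the Jacobson Density Lemma, which is the same fact), the two exact sequences give matching dimension totals, and the inequalities of the preceding lemma (which is Lemma 3.4 in the paper, not 3.5 as you cite — you are proving Lemma 3.5) force equalities, whose equality clauses finish the argument. Your parenthetical remark about the typo in Lemma 3.4(ii) and your bookkeeping note about standard hypothesis (iii) ensuring $\DEnd_{R_F}(P_F)$ is computed against the correct endomorphism algebra are both well taken.
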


\begin{proof}  Let $n$ be the rank of $R$ as $A$-module. Since  $R_K$ is semisimple,  $P_K$ has the double centraliser property by the Jacobson Density Lemma, see e.g., \cite[XVII,\S3, Theorem 1]{Lang}.  Hence we have the short exact sequence
$$0\to \Ann_{R_K}(P_K)\to R_K\to \DEnd_{R_K}(P_K)\to 0$$
in particular 
$$n=\dim_K \Ann_{R_K}(P_K)+ \dim_K \DEnd_{R_K}(P_K)\eqno{(1)}.$$
By  assumption,  we also have the short exact sequence
$$0\to \Ann_{R_F}(P_F)\to R_F \to \DEnd_{R_F}(P_F)\to 0.$$
In particular we have
$$n=\dim_F \Ann_{R_F}(P_F)+ \dim_F \DEnd_{R_F}(P_F)\eqno{(2)}.$$
But by Lemma 3.4,  $\dim_F \Ann_{R_F}(P_F)\geq \dim_K \Ann_{R_K}(P_K)$ and \\
$\dim_F \DEnd_{R_F}(P_F)\geq \dim_K \DEnd_{R_K}(P_K)$ so that (1) and (2) imply that $\dim_F \Ann_{R_F}(P_F)=\dim_K \Ann_{R_K}(P_K)$
and  \\
$\dim_F \DEnd_{R_F}(P_F)= \dim_K \DEnd_{R_K}(P_K)$.  Another application of  Lemma 3.4  completes the proof.
\end{proof}

\q We write ${\Bbb F}_p$  for the field of prime order $p$.

\begin{proposition} Suppose that $R_K$ is semisimple. If, for each  residue field  $F$, the $R_F$-module $P_K$ has the double centraliser property, then the $R$-module $P$ has the double centraliser property.

\q In particular if $G$ is a  finite group and $X$ a finite $G$-set then the permutation module $\zed X$ for $\zed G$ has the double centraliser property provided that for each prime $p$ the ${\Bbb F}_p G$ permutation module ${\Bbb F}_p X$ has the double centraliser property.
\end{proposition}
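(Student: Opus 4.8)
The plan is to deduce the double centraliser property for $P$ over $A$ from the hypothesis over residue fields, using the dimension-counting machinery of Lemmas 3.4 and 3.6 together with Proposition 3.3. First I would recall that, by the standard hypotheses, $(A,R,P)$ has the base change property on endomorphism algebras, so Corollary 2.13(ii) (applied with $\Lambda=\End_R(P)$, which is free of finite rank over $A$ by hypothesis (ii)) tells us that it suffices to prove that for each residue field $F$ the natural map $F\otimes_A \DEnd_R(P)\to \DEnd_{R_F}(P_F)$ is surjective; this is precisely the input required by Proposition 3.3. So the entire task reduces to establishing that surjectivity for every residue field $F$, and then Proposition 3.3 upgrades it to an isomorphism over every $A$-algebra $B$, in particular over $A$ itself, whence exactness of $0\to \Ann_R(P)\to R\to \DEnd_R(P)\to 0$ follows once we also control the annihilator.

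Next I would feed in the semisimplicity of $R_K$. By Lemma 3.6, for each residue field $F$, the assumed double centraliser property of $P_F$ forces the natural maps $F\otimes_A \Ann_R(P)\to \Ann_{R_F}(P_F)$ and $F\otimes_A \DEnd_R(P)\to \DEnd_{R_F}(P_F)$ to be isomorphisms. In particular the second of these is surjective for every residue field $F$, which is exactly the hypothesis of Proposition 3.3. Applying Proposition 3.3 now gives that $B^\op\otimes_A \DEnd_R(P)\to \DEnd_{R_B}(P_B)$ is an isomorphism for all $A$-algebras $B$, and taking $B=A$ identifies $\DEnd_R(P)$ with the base change from a residue field up to the usual dimension bookkeeping. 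To finish, I would assemble the exact sequence: over $K$ we have $0\to \Ann_{R_K}(P_K)\to R_K\to \DEnd_{R_K}(P_K)\to 0$ by the Jacobson density lemma (as in the proof of Lemma 3.6), and the two isomorphisms from Lemma 3.6 (valid over every $F$, hence over $K$ as the fraction field too, or directly by tracking torsion-free ranks) show that the complex $0\to \Ann_R(P)\to R\to \DEnd_R(P)\to 0$ has the same ranks and is exact on the nose, since $R$ is free over $A$ and the cokernel of $R\to \DEnd_R(P)$ is a finitely generated torsion module that vanishes after tensoring with each residue field.

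For the second, more concrete assertion, I would simply specialise: take $A=\zed$, $R=\zed G$, $P=\zed X$. Example 3.1(i) shows $(\zed,\zed G,\zed X)$ satisfies the standard hypotheses, and $R_K=\que G$ is semisimple by Maschke's theorem. The residue fields of $\zed$ are exactly the $\efp$ for $p$ prime (together with $\que$, which is the fraction field, not a residue field), so the hypothesis "$\efp X$ has the double centraliser property for every prime $p$" is precisely the hypothesis of the first part, and the conclusion follows.

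The main obstacle is not really a difficulty so much as a matter of care: one must verify that the cokernel of $\nu: R\to \DEnd_R(P)$ is a finitely generated torsion $A$-module (so that vanishing at every maximal ideal forces it to vanish), and that the dimension inequalities of Lemma 3.4 combine correctly across all residue fields simultaneously to pin down the ranks. Concretely, the delicate point is ensuring that $\dim_F \DEnd_{R_F}(P_F)$ is independent of $F$ and equals $\dim_K \DEnd_{R_K}(P_K)$ for every residue field $F$ — but this is exactly what the equality case of Lemma 3.4(ii) delivers once Lemma 3.6 is in hand, so the argument closes without further input.
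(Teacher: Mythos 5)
Your proof is correct, and it takes a genuinely different route from the paper's. The paper first reduces to the case where $A$ is a discrete valuation ring (by localising at each maximal ideal and then invoking \cite[Proposition 3.9]{AM}), and in that local setting it runs a valuation-theoretic ``index'' argument on each $\sigma\in\DEnd_R(P)$: using the \emph{annihilator} isomorphism from Lemma 3.5 — i.e.\ that $\Ann_{R_F}(P_F)=\Ann_R(P)+\pi R$ — it shows that the smallest $k$ with $\pi^k\sigma\in\nu(R)$ must be $0$. Your argument instead uses the \emph{double centraliser} isomorphism from Lemma 3.5: since $F\otimes_A\DEnd_R(P)\to\DEnd_{R_F}(P_F)$ is an isomorphism and $\nu_F:R_F\to\DEnd_{R_F}(P_F)$ is surjective by hypothesis, right-exactness of $-\otimes_A F$ gives $C\otimes_A F=0$ for $C=\Coker(\nu)$, whence $C=0$ by Nakayama at each localisation together with \cite[Proposition 3.8]{AM}. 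Both approaches are of comparable length; yours is more homological while the paper's is more hands-on. A couple of small remarks: your references to ``Lemma 3.6'' should read Lemma 3.5 (Proposition 3.6 is the statement itself); the detour through Proposition 3.3 is dispensable, since for the cokernel-vanishing step one only needs the residue-field base change of $\DEnd$, which Lemma 3.5 already supplies; and the observation that $C$ is torsion (from $C_K=0$, using Jacobson density over $K$) is true but not needed once you have $C/\M C=0$ for every maximal ideal $\M$ — finite generation of $\DEnd_R(P)$, hence of $C$, over the Noetherian ring $A$ plus Nakayama suffices.
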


\begin{proof}  First suppose that $A$ is a discrete valuation ring. Let $\pi$ be a generator of the maximal ideal.  We need to show that for each $\sigma\in \DEnd_R(P)$ there exists $w\in R$ satisfying $\sigma(p)=wp$ for all $p \in P$. The  map ${\tilde\sigma}\in \End_{R_K}(P_K)$, obtained from $\sigma$  by extending coefficients, lies in $\DEnd_{R_K}(P_K)$ so there is some ${\tilde w}\in R_K$ such that ${\tilde \sigma}(p)={\tilde w}p$, for all $p\in P$. Now we have $\pi^k {\tilde w}\in R$ for some positive integer $k$. So we get that, for some $w\in R$, $\pi^k\sigma(p)=wp$, for all $p\in P$.  We shall say that $\sigma$ has index $k$ if $k$ is the smallest non-negative integer such that there exists $w\in R$ such that $\pi^k\sigma(p)=wp$, for all $p\in P$.

\q Suppose that the proposition  is false and that $\sigma$ is an element of \\
$\DEnd_R(P)$ with smallest possible positive index. Suppose first that $\sigma$ has index $1$. Then, for some $w\in R$,  we have $\pi\sigma(p)=wp$, for all $p\in P$. Let ${\overline w}$ be the image of $w$ in $R_F=R/\pi R$. Then ${\overline  w}$ acts as zero on $P_F$ so that ${\overline  w}\in \Ann_{R_F}(P_F)$, which by  the hypothesis and Lemma 3.5, is   $\Ann_R(P)+\pi R$. Thus we have $w=t+\pi w'$, for some $t\in \Ann_R(P)$ and $w'\in R$.  Hence we have $\pi\sigma(p)=\pi w'p$, for all $p\in P$, and therefore $\sigma(p)=w'p$, for all $p\in P$ and $\sigma$ has index $0$, a contradiction.  So suppose $\sigma$ has index $k>1$. Then $\pi\sigma$ has index $k-1$ so, by the minimality of $k$, there exists $w\in R$ such that $\pi \sigma(p)=wp$, for all $p\in P$. But then $\sigma$ has index $1$, and this case has already been ruled out. 

\q We now  allow $A$ to be an arbitrary Dedekind domain. We consider  the natural map $\nu:R \to \DEnd_R(P)$.  Let $\M$ be a maximal ideal of $A$ then we have the localised map  
$\nu_\M:R_\M  \to \DEnd_R(P)_\M= \DEnd_{P_\M}(P_\M)$, and this we know to be an epimorphism by the case already considered,  since $A_\M$ is a discrete valuation ring.  Hence $\nu_\M$ is an epimorphism, for each maximal ideal $\M$ and so $\nu$ is an epimorphism, by \cite[Proposition 3.9]{AM}.
\end{proof}   

\begin{example} We give an example to show that base change does not always holds on annihilators of permutation modules. Let $A$ be a commutative ring.   Let $G$ be a finite group and $T$ a subgroup. We consider the $G$-set $X=G/T$.  The permutation module $A X$ may be realised as the left ideal of $A G$ generated by $\sum_{t\in T} t$.  Consider a function $f:G\to A$. The 
 element $d=\sum_{g\in G} f(g)g$ belongs to the annihilator of  the $AG$-module $AX$  if and only if $\sum_{g\in G, t\in T}  f(g)gzt=0$ for all $z\in G$.  Now the coefficient of $u\in G$ in  
 $\sum_{g\in G, t\in T}  f(g)gzt$ is $\sum_{t\in T}f(ut^{-1}z^{-1})$, so that the condition for $d$ to belong to the annihilator is $\sum_{t\in T} f(ut^{-1}z^{-1})=0$ for all $u,z\in G$, or 
$$\sum_{t\in T} f(xty^{-1})=0$$
for all $x,y\in G$.  

\q Putting $H=G\times G$, we have an action of $H$ on $G$ by $h\cdot g=xgy^{-1}$, for $h=(x,y)$, $g\in G$. Hence the condition is
$$\sum_{t\in T} f(h\cdot t)=0 \eqno{(3)}$$
for all $h\in H$. We write $C(G,T,A)$ for the $A$-module  of all functions $f:G\to A$ satisfying (3).  Now suppose that $t$ is an element of order $2$ and $T=\{1,t\}$.   We  can make a graph $\Gamma(G,T)$  out of this data,  with vertex set $G$ and edges $\{h\cdot 1, h\cdot t\}$, $h\in H$. Now $C(G,T,A)$ is the $A$-module of all $A$-valued functions  on $G$ satisfying 
$f(g_1)+f(g_2)=0$, whenever $\{g_1,g_2\}$ is an edge.

\q We consider an arbitrary finite simple graph $\Gamma=(V,E)$ and the $A$-module of functions $C(\Gamma,A)$ of all functions $f:V\to A$ such that $f(v)+f(w)=0$, whenever $\{v,w\}$ is an edge. Then $C(\Gamma,A)=\bigoplus_{i=1}^r C(\Gamma_i,A)$, where $\Gamma_i=(V_i,E_i)$, $1\leq i\leq r$, are the connected components of  $\Gamma$. 

\q Let $F$ be a field. If $F$ has characteristic $2$ then $C(\Gamma_i,F)$ consists of the constant  functions on $V_i$ , so that $\dim_F C(\Gamma_i,F)=1$ and $\dim_F C(\Gamma,F)=r$. 

\q  However,  if the characteristic of $F$ is not $2$ then $C(\Gamma_i,F)$ is $0$ if $\Gamma_i$ contains and odd cycle and otherwise is one dimensional. Hence $\dim_F C(\Gamma,F)$ is the number of components without odd cycles and in particular 
$\dim_F C(\Gamma,F)=r$  if $\Gamma$ contains no odd cycles and $\dim_F C(\Gamma,F)<r$ if $\Gamma$ contains an odd cycle. Thus if $\Gamma$ contains an odd cycle then  $\dim_F C(\Gamma,F)>\dim_K C(\Gamma,K)$, for a field $F$ of characteristic $2$ and a field $K$ of characteristic different from $2$.

\q Applying this to $\Gamma(G,T)$ we have that if this graph contains an odd cycle then   $\dim_F C(G,T,F)>\dim_K C(G,T,K)$, for a field $F$ of characteristic $2$ and a field $K$ of characteristic different from $2$. In particular if $A$ is a Dedekind domain with  characteristic $0$ field of fractions $K$ and a residue field $F$ of characteristic $2$ then the $AG$-module $AX$ does not have the base change property on annihilators.

\q One may give concrete examples as follows. Let $m$ be a positive integer divisible by $4$. Let $G$ be the symmetric group $\Sym(m)$ of degree $m$ and let $t$ be the regular involution  $(13)(24)(57)(68)\ldots$, let $x=(123)(567)(9,10,11)\ldots$ and let $y$   be the inverse of $(234)(678)(10,11,12)\ldots$.  Let $h=(x,y)$.  Then the vertices $1$, $h \cdot 1=t$ and $h\cdot t=h^2\cdot 1=x^2y^{-2}$ belong to a cycle of length $3$.  Hence $\dim_F \Ann_{FG}(FX)> \dim_K \Ann_{KG}(KX)$. 
Hence by Lemma 3.4, for $X=H/T$, the permutation module $\zed X$ does not have base change on annihilators.  Moreover, by Lemma 3.5, the permutation module  ${\Bbb F}_2X$  for ${\Bbb F}_2 G$ does not have the double centraliser property.
\end{example}

\bs\bs\bs\bs





\centerline{\bf \LARGE   Part II: Schur algebras, Hecke algebras   }

\centerline{\bf \LARGE    and    symmetric groups }

\bs\bs

\section{The double centraliser property for  some modules for the quantised Schur algebras}

\q We fix a field $F$ and $0\neq q\in F$.  In treating quantum groups over $F$ at parameter $q$, and their representations, we suppress $q$ from the notation whenever possible, in the interests of simplicity. 

\q For a positive integer $n$ we write $G(n)$ for the corresponding quantum general linear group, as in \cite{q-Donk}.  We write $X(n)$ for $\zed^n$ and $X^+(n)$ for the set of elements $\lambda=(\lambda_1,\ldots,\lambda_n)\in X(n)$ such that $\lambda_1\geq \cdots\geq \lambda_n$. As in \cite{EGS}, we write  $\Lambda(n)$ for the set of $n$-tuples of non-negative integers and $\Lambda^+(n)$ for the set of partitions with at most $n$ parts, i.e $\Lambda^+(n)=X^+(n)\bigcap \Lambda(n)$.  By the degree $\deg(\alpha)$ of a finite string of non-negative integers $\alpha=(\alpha_1,\alpha_2,\cdots)$ we mean the sum $\alpha_1+\alpha_2+\cdots$.   For $r\geq 0$ we write $\Lambda(n,r)$ for the set of elements of $\Lambda(n)$ that have degree $r$ and write $\Lambda^+(n,r)$ for $X^+(n)\bigcap \Lambda(n,r)$.  We have the usual (dominance) partial order on $\Lambda(n)$. Thus $\alpha=(\alpha_1,\ldots,\alpha_n)\leq \beta=(\beta_1,\ldots,\beta_n)$ if $\alpha$ and $\beta$ have the same degree and $\alpha_1+\cdots+\alpha_i\leq \beta_1+\cdots+\beta_i$, for all $1\leq i\leq n$.

\q We have the Borel subgroup $B(n)$ and maximal torus $T(n)$ as in \cite{q-Donk}. Weights of $G(n)$-module and $B(n)$-modules (and $T(n)$-modules) are computed with respect to $T(n)$. 
For each $\lambda\in X^+(n)$ we have a simple $G(n)$-module $L(\lambda)$ with highest weight $\lambda$ and the modules $L(\lambda)$, $\lambda\in X^+(n)$, form a complete set of pairwise non-isomorphic simple  $G(n)$-modules.

\q  For  $\lambda\in \Lambda(n)$ we have a one dimensional $B(n)$-module $F_\lambda$ with weight $\lambda$, and the induced module $\ind_{B(n)}^{G(n)} F_\lambda$,  as in  \cite[0.21]{q-Donk}. For $\lambda\in X^+(n)$ the induced module $\ind_{B(n)}^{G(n)} F_\lambda$ is a non-zero  finite dimensional  module, which we denote $\nabla(\lambda)$. For $\lambda\in \Lambda^+(n)$ the module $\nabla(\lambda)$ is polynomial of degree $\deg(\lambda)$.

\q Recall that a finite dimensional $G(n)$-module $M$  is called a tilting module if  both  $M$ and the dual module module $M^*$ admit  filtrations with sections of the form $\nabla(\lambda)$, $\lambda\in X^+(n)$.  For $\lambda\in X^+(n)$ there is an indecomposable tilting module $T(\lambda)$, with highest weight $\lambda$, and the modules $T(\lambda)$, $\lambda\in X^+(n)$, form a complete set of indecomposable tilting modules.  For $\lambda \in \Lambda^+(n)$, the module $T(\lambda)$ is polynomial of degree $\deg(\lambda)$.

\q The category of  (left) $G(n)$-module which are polynomial of degree $r$ is naturally equivalent to the category of (left) modules for the (quantised)  Schur algebra $S(n,r)$.    The   Schur algebra $S(n,r)$ has the structure of a quasi-hereditary algebra with respect to the natural partial order on $\Lambda^+(n,r)$ and  labelling of simple modules $L(\lambda)$,   with costandard modules $\nabla(\lambda)$,  and tilting modules $T(\lambda)$,  $\lambda\in \Lambda^+(n,r)$ (for details see for example \cite[Appendix]{q-Donk}).

 \q We shall say that a subset $\sigma$ of $\Lambda^+(n,r)$ is {\it saturated}  (\resp  {\it co-saturated})  if whenever $\mu\in \sigma$, $\lambda\in \Lambda^+(n,r)$ and $\lambda\leq \mu$ (\resp  $\lambda\geq \mu$)  then $\lambda\in \sigma$.

 \q  For a finite dimensional algebra $S$ over $F$ a finite dimensional indecomposable module $U$ and a finite dimensional $S$-module $V$ we write $(V \vert U)$ for the number of components isomorphic to $U$ in a decomposition of $V$ as a direct sum of indecomposable modules.  
 
 \q For a tilting module $T$ polynomial of degree $r$ we set 
 $$\pi_s(T)=\{\lambda\in \Lambda^+(n,r) \vert (T\vert T(\lambda))\neq 0\}$$
   (the subscript $s$ indicates that the tilting module corresponds to the signed Young modules that appear in Section 5).   
 
\q  By \cite[Proposition 2.1]{LieModules} we have the following.

\begin{proposition}  If $\pi_s(T)$ is saturated then $T$, as an  $S(n,r)$-module, 
 has the double centraliser property.  
\end{proposition}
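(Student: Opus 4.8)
The plan is to reduce the statement to a known result about quasi-hereditary algebras, namely that a module $T$ over a quasi-hereditary algebra $S$ has the double centraliser property provided $T$ is a full tilting module for a Serre subcategory (equivalently, for a poset ideal of the weight poset) — the point being that over a quasi-hereditary algebra, a tilting module $T$ is a faithful double-centraliser module precisely when its summands exhaust the indecomposable tilting modules, and more generally a partial tilting module controls the double centraliser over the quotient algebra corresponding to $\pi_s(T)$ when that set is saturated. Concretely, first I would recall that $S(n,r)$ is quasi-hereditary with respect to the dominance order on $\Lambda^+(n,r)$, with standard modules $\Delta(\lambda)$, costandard modules $\nabla(\lambda)$, and indecomposable tilting modules $T(\lambda)$, and that for a saturated subset $\sigma$ of $\Lambda^+(n,r)$ the quotient $S(n,r)/J_\sigma$ by the ideal $J_\sigma$ associated to $\sigma$ is again quasi-hereditary, with tilting modules exactly the $T(\lambda)$ for $\lambda\in\sigma$; this is precisely where saturatedness is used.

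Next I would observe that since $\pi_s(T)$ is saturated, every indecomposable summand $T(\lambda)$ of $T$ has $\lambda\in\pi_s(T)$, and conversely by definition of $\pi_s(T)$ each such $T(\lambda)$ occurs; moreover the action of $S(n,r)$ on $T$ factors through $S(n,r)/J_{\pi_s(T)}$ because $J_{\pi_s(T)}$ annihilates every $\nabla(\mu)$ and every $T(\lambda)$ with $\lambda\in\pi_s(T)$ — here one uses the good filtration of $T$ together with the fact that $J_\sigma$ kills $\nabla(\mu)$ for $\mu\in\sigma$. Thus $T$ is a faithful $S(n,r)/J_{\pi_s(T)}$-module which, as just noted, contains every indecomposable tilting module of that quotient algebra as a summand. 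Then the cited result \cite[Proposition 2.1]{LieModules} (or the classical theorem that a faithful full tilting module over a quasi-hereditary algebra has the double centraliser property — Ringel's theorem, in the guise used there) applies to give that $T$ has the double centraliser property as an $S(n,r)/J_{\pi_s(T)}$-module.

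Finally I would check that the double centraliser property as an $S(n,r)/J_{\pi_s(T)}$-module transfers back to an $S(n,r)$-module statement. This is the only slightly delicate bookkeeping step: since the $S(n,r)$-action on $T$ factors through the surjection $S(n,r)\twoheadrightarrow S(n,r)/J_{\pi_s(T)}$, we have $\End_{S(n,r)}(T)=\End_{S(n,r)/J_{\pi_s(T)}}(T)$, hence also $\DEnd_{S(n,r)}(T)=\DEnd_{S(n,r)/J_{\pi_s(T)}}(T)$, and surjectivity of $S(n,r)/J_{\pi_s(T)}\to\DEnd(T)$ composed with the surjection from $S(n,r)$ gives surjectivity of $S(n,r)\to\DEnd_{S(n,r)}(T)$, i.e. the double centraliser property over $S(n,r)$ itself. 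The main obstacle is the middle step — verifying carefully that the action factors through exactly $S(n,r)/J_{\pi_s(T)}$ and that this quotient's tilting theory is governed by the saturated set $\pi_s(T)$ — but all of this is encapsulated in the invoked \cite[Proposition 2.1]{LieModules}, so at the level of this paper the proof is essentially a direct citation once the setup is in place.
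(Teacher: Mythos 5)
Your proposal is correct and takes essentially the same route as the paper: Donkin gives no argument at all here beyond citing \cite[Proposition 2.1]{LieModules}, and your write-up amounts to unpacking what that proposition says (full tilting module over the quasi-hereditary quotient corresponding to the saturated set $\pi_s(T)$) before invoking the same citation. The extra bookkeeping you supply about the action factoring through $S(n,r)/J_{\pi_s(T)}$ and the double centraliser property transferring back along a surjection is accurate, but it is precisely the content packaged in the cited result, so the proofs coincide.
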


\q We write $E$ for the natural $G(n)$-module. For $r\geq 0$ we  have  the $r$th (quantised) exterior power $\tbw^rE$,  as in \cite[1.2]{q-Donk} and, for a finite string of non-negative integers  $\alpha=(\alpha_1,\alpha_2,\ldots)$ the polynomial module \\
$\tbw^\alpha E=\tbw^{\alpha_1}E\otimes \tbw^{\alpha_2}E \otimes \cdots$. We shall use that if $\beta$ is a string obtained by reordering the parts of $\alpha$ then $\tbw^\beta E$ is isomorphic to $\tbw^\alpha E$,  see \cite[Remark (i) following   3.3  (1)]{q-Donk}. 
 
 \q The transpose of a partition $\lambda$ will be denoted $\lambda'$.  For  $\lambda\in \Lambda^+(n)$   we have
$$\tbw^{\lambda'} E=T(\lambda)\oplus C\eqno{(1)}$$
where $C$ is a direct sum of tiling modules  $T(\mu)$ with $\mu$ smaller than $\lambda$ in the  dominance order.

\q  We are interested in conditions that ensure that a direct sum of modules of the form $\tbw^\alpha  E$ has the double centraliser property.  

\q Let $r\geq 0$. We write $\Par(r)$ for the set of partitions of $r$.  For a subset $\sigma$ of $\Par(r)$ we write $\sigma'$ for the set $\{\lambda'\vert\lambda \in \sigma\}$. For $\lambda\in \Par(r)$ the module  $\tbw^\lambda E$ is non-zero if and only if $\lambda\in \Lambda^+(n,r)'$. Moreover, $\tbw^\lambda E$ has highest weight $\lambda'$, for $\lambda\in \Lambda^+(n,r)'$.  Now if   $M=\oplus_{\lambda\in \Lambda(n,r)'} (\tbw^\lambda E)^{(d_\lambda)}$ and if  $\mu$ is minimal such that $d_\mu\neq 0$ then $\mu'$ is a highest weight weight of $M$ and occurs with multiplicity $d_\mu$.  It follows that if we can also write $M=\oplus_{\lambda\in \Lambda(n,r)'} (\tbw^\lambda E)^{(e_\lambda)}$ then $d_\mu=\e_\mu$. Thus $M=N\oplus (\tbw^\mu E)^{(d_\mu)}$ where  
$$N=\bigoplus_{\lambda\in \Lambda(n,r)'\backslash\{\mu\}} (\tbw^\lambda E)^{(d_\lambda)}=\bigoplus_{\lambda\in \Lambda(n,r)'\backslash\{\mu\}} (\tbw^\lambda E)^{(e_\lambda)}$$
so we get by induction that $d_\lambda=e_\lambda$, for all $\lambda\in \Lambda^+(n,r)'$.  So we have:

\begin{lemma} If the finite dimensional  $S(n,r)$-module $T$ is a direct sum of modules of the form $\tbw^\lambda E$, $\lambda\in \Lambda^+(n,r)'$,  then the multiplicities $d_\lambda$ in an expression 
$$T=\bigoplus_{\lambda\in \Lambda^+(n,r)'} ( \tbw^\lambda E)^{(d_\lambda)}$$
are uniquely determined.
\end{lemma}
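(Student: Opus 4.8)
The plan is to show that the formal characters $\ch\tbw^\lambda E$, $\lambda\in\Lambda^+(n,r)'$, are linearly independent over $\zed$; since two expressions $T=\bigoplus_\lambda(\tbw^\lambda E)^{(d_\lambda)}=\bigoplus_\lambda(\tbw^\lambda E)^{(e_\lambda)}$ give $\sum_\lambda d_\lambda\,\ch\tbw^\lambda E=\ch T=\sum_\lambda e_\lambda\,\ch\tbw^\lambda E$, linear independence forces $d_\lambda=e_\lambda$ for all $\lambda$. Linear independence is in turn immediate from the following triangularity property of the building blocks: for $\lambda\in\Lambda^+(n,r)'$ the nonzero module $\tbw^\lambda E=\tbw^{\lambda_1}E\otimes\tbw^{\lambda_2}E\otimes\cdots$ has highest weight $\lambda'$, this weight space is one dimensional, and every weight of $\tbw^\lambda E$ is $\leq\lambda'$ in the dominance order.

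For the triangularity: every weight of $\tbw^k E$ has the form $\varepsilon_{j_1}+\cdots+\varepsilon_{j_k}$ with $j_1<\cdots<j_k$, and comparing partial sums shows it is $\leq\omega_k:=\varepsilon_1+\cdots+\varepsilon_k$; adding over the tensor factors, every weight of $\tbw^\lambda E$ is $\leq\sum_i\omega_{\lambda_i}=\lambda'$. For the one-dimensionality, a weight vector of weight $\lambda'$ is a linear combination of pure tensors $v_1\otimes v_2\otimes\cdots$ with $v_i=e_{j_1}\wedge\cdots\wedge e_{j_{\lambda_i}}$ ($j_1<\cdots<j_{\lambda_i}$) and $\sum_i{\rm wt}(v_i)=\lambda'$; since $\lambda'_1$ equals the number of nonzero parts of $\lambda$, comparing first coordinates forces every $v_i$ to begin with $e_1$, then comparing second coordinates forces every $v_i$ of length $\geq2$ to continue with $e_2$, and so on, so the only such tensor is $(e_1\wedge\cdots\wedge e_{\lambda_1})\otimes(e_1\wedge\cdots\wedge e_{\lambda_2})\otimes\cdots$.

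Granting this, write $\ch\tbw^\lambda E=\e(\lambda')+(\text{a sum of }\e(\nu)\text{ with }\nu<\lambda')$, where $\e$ denotes the standard basis of the group ring of the weight lattice; transposition is an injection on $\Lambda^+(n,r)'$, so distinct $\lambda$ give distinct leading terms $\e(\lambda')$. If $\sum_\lambda d_\lambda\,\ch\tbw^\lambda E=\sum_\lambda e_\lambda\,\ch\tbw^\lambda E$ with not all $d_\lambda=e_\lambda$, choose $\mu$ with $\mu'$ maximal among $\{\lambda':d_\lambda\neq e_\lambda\}$. For $\lambda\neq\mu$ the monomial $\e(\mu')$ occurs in $\ch\tbw^\lambda E$ only if $\mu'<\lambda'$, in which case $d_\lambda=e_\lambda$ by maximality of $\mu'$; hence, equating the coefficients of $\e(\mu')$ on the two sides, all contributions from $\lambda\neq\mu$ cancel and we are left with $d_\mu=e_\mu$, a contradiction. (Equivalently one runs this leading-term argument directly in the split Grothendieck group, peeling off a $\mu$ minimal in the dominance order among the indices with $d_\mu\neq0$ and cancelling $(\tbw^\mu E)^{(d_\mu)}$ by Krull--Schmidt.) The one genuinely substantive step is the triangularity property — in particular the one-dimensionality of the top weight space of $\tbw^\lambda E$; it cannot be bypassed by invoking Krull--Schmidt on indecomposable summands, since $\tbw^\lambda E$ is typically decomposable (by $(1)$, $\tbw^{\lambda'}E=T(\lambda)\oplus C$) and distinct building blocks may share indecomposable constituents.
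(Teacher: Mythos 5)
Your proof is correct and follows essentially the same strategy as the paper: show that $\tbw^\lambda E$ has unique highest weight $\lambda'$ (with all other weights strictly smaller in dominance), then peel off the maximal leading term and induct. The only difference is that the paper reads the triangularity and one-dimensionality of the top weight space directly off the tilting-module decomposition $\tbw^{\lambda'}E=T(\lambda)\oplus C$ stated in (1), whereas you reprove these facts from scratch by an explicit weight computation on the basis of pure wedge-tensors; both are fine, and your closing remark that Krull--Schmidt cannot be applied naively to the (generally decomposable) $\tbw^\lambda E$ correctly identifies why the highest-weight input is unavoidable.
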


\q For a module $T$ of the form $T=\bigoplus_{\lambda\in \Lambda^+(n,r)'} ( \tbw^\lambda E)^{(d_\lambda)}$ we write  $\zeta_s(T)$ for  the set of $\lambda\in \Lambda^+(n,r)'$ such that $d_\lambda\neq 0$.

\begin{lemma} Assume $r\leq n$. Let $T$ be a finite direct sum of modules of the form $\tbw^\lambda E$, $\lambda\in \Par(r)$.  Then $\zeta_s(T)'\subseteq \pi_s(T)$, with equality if $\zeta_s(T)$ is co-saturated.

\end{lemma}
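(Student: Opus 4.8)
The plan is to analyze how the indecomposable tilting summands of $T$ arise from the building blocks $\tbw^\lambda E$, using the key structural fact (1) preceding the proof of Lemma 4.5. Write $T = \bigoplus_{\lambda \in \zeta_s(T)} (\tbw^\lambda E)^{(d_\lambda)}$ with all $d_\lambda \neq 0$. By (1), for each $\lambda \in \Par(r)$ we have $\tbw^{\lambda'} E = T(\lambda) \oplus C_\lambda$ where $C_\lambda$ is a direct sum of $T(\mu)$ with $\mu < \lambda$ in the dominance order; equivalently, $\tbw^\lambda E = T(\lambda') \oplus C$ with $C$ a sum of $T(\nu)$ for $\nu < \lambda'$. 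The first step is to show $\zeta_s(T)' \subseteq \pi_s(T)$: take $\lambda \in \zeta_s(T)$ and argue that $\lambda'$ must appear among the indecomposable tilting summands of $T$. For this I would choose $\lambda$ maximal in $\zeta_s(T)$ subject to the property that $\lambda'$ fails to occur; then $T(\lambda')$ occurs in $\tbw^\lambda E$ with multiplicity $d_\lambda \geq 1$, and by maximality any cancellation of $T(\lambda')$ would have to come from some $\tbw^\mu E$ with $\mu \in \zeta_s(T)$, $\mu \neq \lambda$, contributing $T(\lambda')$ — but $\tbw^\mu E$ only contributes $T(\nu)$ with $\nu \leq \mu'$, forcing $\lambda' \leq \mu'$, i.e. $\mu \leq \lambda$; a Krull–Schmidt count of total multiplicity of $T(\lambda')$ across all summands then shows it cannot vanish. (The cleanest phrasing: pick $\lambda$ minimal in $\zeta_s(T)$; then $\lambda'$ is maximal among the relevant transposes, so $T(\lambda')$ appears in $\tbw^\lambda E$ and in no other $\tbw^\mu E$ with $\mu \in \zeta_s(T)$, hence $(T \mid T(\lambda')) = d_\lambda \neq 0$; remove $(\tbw^\lambda E)^{(d_\lambda)}$ and induct.)

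For the reverse inclusion under the hypothesis that $\zeta_s(T)$ is co-saturated, I would show $\pi_s(T) \subseteq \zeta_s(T)'$, i.e. every indecomposable tilting summand $T(\kappa)$ of $T$ has $\kappa' \in \zeta_s(T)$. Expanding $T = \bigoplus_\lambda (\tbw^\lambda E)^{(d_\lambda)} = \bigoplus_\lambda (T(\lambda') \oplus C_\lambda)^{(d_\lambda)}$ and collecting, $T(\kappa)$ occurs in $T$ only if $\kappa$ occurs in some $\tbw^\lambda E$ with $\lambda \in \zeta_s(T)$; but by (1) every tilting constituent $T(\nu)$ of $\tbw^\lambda E$ satisfies $\nu \leq \lambda'$, so $\kappa \leq \lambda'$ for some $\lambda \in \zeta_s(T)$, i.e. $\kappa' \geq \lambda$. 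Co-saturation of $\zeta_s(T)$ means $\kappa' \in \zeta_s(T)$, and combined with the $\lambda = \kappa'$ case giving a genuine occurrence (multiplicity exactly $d_{\kappa'}$ from the leading term, since lower $\tbw^\mu E$ with $\mu > \kappa'$ can only contribute $T(\nu)$ with $\nu \leq \mu' < \kappa$... here I must be slightly careful about the direction), this gives $\kappa' \in \zeta_s(T)$ hence $\kappa \in \zeta_s(T)'$, and together with the first inclusion, $\pi_s(T) = \zeta_s(T)'$.

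The assumption $r \leq n$ is used precisely to guarantee that all partitions of $r$ have at most $n$ parts, so that $\tbw^\lambda E \neq 0$ and has highest weight $\lambda'$ for every $\lambda \in \Par(r)$ — this is what makes the indexing by $\Lambda^+(n,r)' = \Par(r)$ unambiguous and lets Lemma 4.5 apply.

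The main obstacle is bookkeeping the multiplicities correctly when several $\tbw^\lambda E$ contribute to the same $T(\kappa)$: I need to run an induction on $\zeta_s(T)$ ordered by dominance (peeling off either a minimal or maximal element) so that at each stage the "leading" tilting summand is contributed by a unique block, making its multiplicity visibly nonzero. The co-saturation hypothesis is exactly the closure condition that prevents a tilting summand $T(\kappa)$ of $T$ from having its transpose $\kappa'$ fall outside $\zeta_s(T)$; without it the inclusion $\zeta_s(T)' \subseteq \pi_s(T)$ can be strict, and I should phrase the argument so that this point is transparent.
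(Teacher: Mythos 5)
Your overall approach is the paper's: decompose via the key fact (1) that $\tbw^{\lambda'}E = T(\lambda)\oplus C$ with $C$ a sum of $T(\mu)$, $\mu<\lambda$, and chase the dominance partial order. The chain of reasoning in your ``cleanest phrasing'' and in the co-saturated case is essentially what the paper does. But you significantly over-engineer the forward inclusion: there is no ``cancellation'' to guard against. In the Krull--Schmidt setting a direct summand of a direct summand is a direct summand, and multiplicities of indecomposable summands only add across a direct sum decomposition, never subtract. So if $\lambda\in\zeta_s(T)$ then $\tbw^\lambda E$ is a direct summand of $T$, and by (1) $T(\lambda')$ is a direct summand of $\tbw^\lambda E$, hence of $T$; that single sentence is the entire proof of $\zeta_s(T)'\subseteq\pi_s(T)$. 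No induction, no extremality argument, no multiplicity count is needed, and the worry about ``any cancellation of $T(\lambda')$'' misreads what a direct sum decomposition can do.

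On the reverse inclusion, your main argument (since $T(\kappa)$ is a direct summand of $T$ it is a direct summand of some $\tbw^\mu E$ with $\mu\in\zeta_s(T)$; by (1) $\kappa\leq\mu'$, so $\kappa'\geq\mu$, and co-saturation gives $\kappa'\in\zeta_s(T)$) is exactly the paper's and is correct. The parenthetical claim that $T(\kappa)$ occurs in $T$ with multiplicity exactly $d_{\kappa'}$ is false in general and should be dropped: summands $\tbw^\mu E$ with $\mu<\kappa'$ (so $\mu'>\kappa$) may also contribute copies of $T(\kappa)$, so the multiplicity is only bounded below by $d_{\kappa'}$, not pinned. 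You already sense a problem there (``I must be slightly careful about the direction''); fortunately the claim plays no role in the proof, since to establish $\pi_s(T)\subseteq\zeta_s(T)'$ you only need that $T(\kappa)$ occurs in $T$, which is the hypothesis. Your identification of the role of $r\leq n$ is correct.
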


\begin{proof} Let $\lambda\in \zeta_s(T)$. Then $\tbw^\lambda E$ is a direct summand of $T$ and, by (1), $T(\lambda')$ is a direct summand of $\tbw^\lambda E$, and hence of $T$, so that $\lambda'\in \pi_s(T)$.  

\q Now suppose that $\zeta_s(T)$ is co-saturated. Let  $\lambda\in \pi_s(T)$.  Then $T(\lambda)$ is a direct summand of $T$ and hence of some summand  $\tbw^\mu E$ of $T$.  Then  $\mu\in \zeta_s(T)$ and, by (1), we have $\lambda\leq \mu'$. Hence $\lambda'\geq \mu$ and,  since $\zeta_s(T)$ is co-saturated, we have $\lambda'\in \zeta_s(T)$, i.e. $\lambda\in \zeta_s(T)'$.
\end{proof}

    \q  Note that if $T$ is as in the above lemma with $\zeta_s(T)$ cosaturated  then $\pi_s(T)$ is saturated and so $T$, as a module for $S(n,r)$, has the double centraliser property by Proposition 4.1.    We can improve on this basic situation somewhat.  
    
    \q We recall, \cite[I, Section 6]{Mac}, the notion of a refinement of a partition.  If $\lambda$ and $\mu$ are partitions then we write $\lambda\cup \mu$ for the partition made from all the parts of $\lambda$ and $\mu$ by arranging them in descending order.  Let $\mu=(\mu_1,\mu_2,\ldots)$ be a partition. A partition $\lambda$ is a {\it refinement} of $\mu$ if it is possible to write $\lambda=\cup_{i\geq 1} \lambda^{(i)}$, where each $\lambda^{(i)}$ is partition of $\mu_i$. In this situation we shall also say that $\mu$ is a {\it coarsening} of $\lambda$. We shall say that $\mu$ is a {\it simple coarsening} of $\lambda$ if the parts of $\mu$ are $\lambda_1,\ldots,\lambda_{i-1},\lambda_i+\lambda_j,\lambda_{i+1},\ldots,\lambda_{j-1},\lambda_{j+1},\lambda_{j+2},\ldots$ (arranged in descending order) for some $1\leq i<j$.
    
   \q Note that $\mu$ is coarsening of $\lambda$ if and only if, for some positive integer $m$,  there exists a sequence of partitions $\mu=\mu(1),\mu(2),\ldots,\mu(m)=\lambda$, such that $\mu(k)$ is a simple coarsening of $\mu(k+1)$, for $1\leq k <m$.  Note that it $\mu$ is a coarsening of $\lambda$ then $\mu\geq \lambda$.
 
 \q For a subset $\sigma$ of $\Lambda^+(n,r)$ we write $\hatsigma$ for the coarsening of $\sigma$, i.e. the set of all $\mu\in \Lambda^+(n,r)$ such that $\mu$ is a coarsening of some $\lambda\in \sigma$.

\begin{proposition} Assume $n\geq r$. Let $T$ be a module of the form \\
$\bigoplus_{\lambda\in \Lambda^+(n,r)} (\tbw^\lambda E)^{(d_\lambda)}$, for some non-negative integers $d_\lambda$. If $\hatzeta_s(T)$ is co-saturated   then $T$ has the double centraliser property.
  \end{proposition}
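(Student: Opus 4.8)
The plan is to reduce the statement to Proposition 4.1 by showing that $\pi_s(T)$ is saturated, but now we cannot simply invoke Lemma 4.4 because $\zeta_s(T)$ itself need not be co-saturated --- only its coarsening $\hatzeta_s(T)$ is. So the first step is to replace $T$ by a module $T'$ with the same indecomposable tilting summands (hence the same $\pi_s$, hence the same double centraliser behaviour) for which Lemma 4.4 does apply. Concretely, I would build $T'$ as a direct sum of modules $\tbw^\mu E$ with $\mu$ ranging over $\hatzeta_s(T)$, choosing multiplicities so that $\zeta_s(T')=\hatzeta_s(T)$; since $\hatzeta_s(T)$ is by hypothesis co-saturated, Lemma 4.4 gives $\pi_s(T')=\zeta_s(T')' = \hatzeta_s(T)'$, which is a saturated subset of $\Lambda^+(n,r)$ (it is the transpose of a co-saturated set, and transposition reverses dominance). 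Then Proposition 4.1 applies to $T'$.

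The key step --- and the main obstacle --- is the claim that $T$ and $T'$ have the same set of indecomposable tilting summands, i.e. $\pi_s(T)=\pi_s(T')$. One inclusion is easy: every $\tbw^\lambda E$ appearing in $T$ has, by (1), indecomposable tilting summands $T(\mu)$ only for $\mu\leq\lambda'$, and $T(\lambda')$ itself with multiplicity one; so $\pi_s(T)\subseteq\{\mu : \mu\leq\lambda' \text{ for some }\lambda\in\zeta_s(T)\}$. The point is that this set equals $\hatzeta_s(T)'$: if $\mu\leq\lambda'$ then $\mu'\geq\lambda$, and one checks that for partitions $\mu'\geq\lambda$ is equivalent to $\mu'$ being a coarsening of $\lambda$ is \emph{not} true in general --- so here I need to be more careful. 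The correct replacement is: by Lemma 4.4 applied to each summand $\tbw^\lambda E$ individually (whose $\zeta_s$ is the singleton $\{\lambda\}$, which is not co-saturated, so only the inclusion holds), together with the explicit decomposition of $\tbw^\lambda E$ into tilting modules via the combinatorics of \cite[I, Section 6]{Mac}: the indecomposable tilting summands $T(\mu)$ of $\tbw^\lambda E$ are governed by $\mu'$ being a refinement of... Actually the cleanest route is to use that $\tbw^\lambda E$, as a tensor product $\tbw^{\lambda_1}E\otimes\tbw^{\lambda_2}E\otimes\cdots$, is a tilting module whose summands $T(\nu)$ all satisfy: $\nu'$ is a partition obtained from $\lambda$ by a sequence of coarsenings, i.e. $\nu'\in\widehat{\{\lambda\}}$. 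Granting this, we get $\pi_s(\tbw^\lambda E)'\subseteq\widehat{\{\lambda\}}$, hence $\pi_s(T)'\subseteq\hatzeta_s(T)$, hence $\pi_s(T)\subseteq\hatzeta_s(T)' = \pi_s(T')$; and the reverse inclusion $\hatzeta_s(T)'\subseteq\pi_s(T)$ follows because for $\mu\in\zeta_s(T)$ and any coarsening $\nu\geq\mu$, the module $T(\nu')$ occurs as a summand of $\tbw^\mu E$ (this is the nontrivial direction, using that $\tbw^\nu E$ is a summand of $\tbw^\mu E$ when $\nu$ is a coarsening of $\mu$, followed by (1)).

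So the structure is: (a) establish $\pi_s(T)=\hatzeta_s(T)'$ using (1), the refinement/coarsening combinatorics, and the fact that $\tbw^\nu E \mid \tbw^\mu E$ when $\nu$ coarsens $\mu$; (b) observe $\hatzeta_s(T)'$ is saturated since $\hatzeta_s(T)$ is co-saturated and transposition is order-reversing; (c) conclude by Proposition 4.1. I expect step (a), specifically pinning down exactly which $T(\nu)$ appear in a tensor product $\tbw^\lambda E$ of exterior powers and matching this to the coarsening order, to be the delicate part --- it requires either a careful induction on the number of parts of $\lambda$ using the known decomposition $\tbw^a E\otimes\tbw^b E\cong\bigoplus \tbw^{(a+b-i,i)}E$-type formulas, or a direct highest-weight argument combined with (1) at each stage. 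Everything else is bookkeeping with the dominance order and transposition.
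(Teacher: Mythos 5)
The proposal has a genuine gap, and it lies exactly where you flagged the argument as ``the delicate part.'' Your plan hinges on the claim that $\tbw^\nu E$ is a \emph{direct summand} of $\tbw^\mu E$ whenever $\nu$ is a coarsening of $\mu$, so that $\pi_s(T)=\hatzeta_s(T)'$ and Proposition~4.1 can be invoked. This claim is false in general, and consequently so is the identity $\pi_s(T)=\hatzeta_s(T)'$. Take $\mu=(1,1)$, $\nu=(2)$ in characteristic~$2$ with $n\geq 2$. Then $\tbw^\mu E=E\otimes E$ and $\tbw^\nu E=\tbw^2 E=T(1,1)$. The composite of the canonical maps $\tbw^2 E\hookrightarrow E\otimes E \twoheadrightarrow \tbw^2 E$ is multiplication by $\binom{2}{1}=2=0$, so there is no splitting; in fact $E\otimes E$ is uniserial with composition factors $L(1,1),L(2,0),L(1,1)$ and equals the single indecomposable tilting module $T(2,0)$. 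Hence $\pi_s(E\otimes E)=\{(2,0)\}$, which is co-saturated but not saturated, whereas $\hatzeta_s(E\otimes E)'=\{(2),(1,1)\}$. So $\pi_s(T)\ne\hatzeta_s(T)'$ and Proposition~4.1 is not applicable to $T=E\otimes E$ (even though $E\otimes E$ does have the double centraliser property -- this is ordinary Schur--Weyl duality). Saturation of $\pi_s(T)$ is a sufficient but not a necessary condition, and here it simply fails.

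The paper's proof avoids this obstruction entirely: it never tries to show $\pi_s(T)$ is saturated. Instead it observes that when $\mu$ is a simple coarsening of $\lambda$ one only has a surjection $\tbw^\lambda E\to\tbw^\mu E$ (the multiplication map on the two adjacent exterior-power factors), not a splitting. It then invokes \cite[Lemma 3.1]{LieModules}, which says that $T$ has the double centraliser property if and only if $T\oplus\tbw^\mu E$ does, when $\tbw^\mu E$ is an epimorphic image of a summand of $T$. Repeating this, one enlarges $T$ until $\zeta_s$ becomes co-saturated without changing the answer; at that point Lemma~4.3 gives that $\pi_s$ of the enlarged module is saturated, and Proposition~4.1 applies to the base case. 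So the missing ingredient in your proposal is precisely the lemma that lets one pass from ``quotient'' to ``direct summand up to DCP-equivalence.'' Without it, the reduction to Proposition~4.1 does not go through, and the example above shows no amount of refinement/coarsening combinatorics can rescue the identity $\pi_s(T)=\hatzeta_s(T)'$.
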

  
  \begin{proof}  We argue by induction on $|\hatzeta_s(T)\backslash \zeta_s(T)|$. If this is $0$ then $\zeta_s(T)$ is co-saturated and so $T$ is a double centraliser module by the remarks preceding the lemma.
  Now suppose that $h=|\hatzeta_s(T)\backslash \zeta_s(T)|>0$ and that the result holds for all direct sums $U$ of modules of the form $\tbw^\lambda E$, with $\hatzeta_s(U)$ co-saturated and    $|\hatzeta_s(U)\backslash \zeta_s(U)|<h$.  
  
  \q We choose $\mu\in \hatzeta_s(T)\backslash \zeta_s(T)$.  Then there exists $\lambda\in \zeta_s(T)$ and partitions $\mu(1),\ldots,\mu(m)$ such that $\mu=\mu(1), \lambda=\mu(m)$ and $\mu(k)$ is a simple coarsening of $\mu(k+1)$, for $1\leq k<m$. 
  We choose $k$ maximal such that \\
  $\mu(k)\not\in \zeta_s(T)$ and replace $\mu$ by $\mu(k)$ and $\lambda$ by $\mu(k+1)$.  Thus we may assume $\mu$ is a simple coarsening of $\lambda$. Writing  $\lambda =(\lambda_1,\lambda_2,\cdots)$,   the partition  $\mu$ has parts 
  $\lambda_1,\ldots,\lambda_{i-1},\lambda_i+\lambda_j,\lambda_{i+1}\ldots, \lambda_{j-1},\lambda_{j+1},\ldots$ for some $i<j$. 
  
  \q Thus  $\tbw^{\lambda}E$ is isomorphic to $ X\otimes   \tbw^{\lambda_i}E\otimes \tbw^{\lambda_j} E \otimes Y$ and 
  $\tbw^\mu E$ to \\
  $X\otimes \tbw^{\lambda_i+\lambda_j}E\otimes Y$, for suitable tensor products of exterior powers $X$ and $Y$. We have an epimorphism $\tbw^{\lambda_i}E \otimes \tbw^{\lambda_j}E\to  \tbw^{\lambda_i+\lambda_j}E$ given by the product and hence an epimorphism  $\tbw^{\lambda}E\to \tbw^\mu E$. By \cite[Lemma 3.1]{LieModules}, $T$ has the double centraliser property if and only if $U=T\oplus \tbw^\mu E$ has the double centraliser property. However, we have $\zeta_s(U)=\zeta_s(T)\cup \{\mu\}$,  the coarsening $\hatzeta_s(U)$ of $\zeta_s(U)$ is equal to 
  $\hatzeta_s(T)$,  and 
  $$|\hatzeta_s(U)\backslash \zeta_s(U)|=|\hatzeta_s(T)\backslash \zeta_s(T)|-1=h-1.$$
    Hence $U$ has the double centraliser property and so too  does $T$.
  \end{proof}

 \begin{example}
  We note the following special case.   Suppose that $r=a+b\leq n$.  We consider the coarsening  $\sigma$ of $\sigma=\{(a,1^b)\}$.  We claim that 
 $$\hatsigma=\{\lambda=(\lambda_1,\lambda_2,\ldots) \in \Lambda^+(n,r) \vert \lambda_1\geq a\}.$$
  Let $\lambda\in \hatsigma$. Then $\lambda\geq (a,1^b)$ so $\lambda_1\geq 1$.    If $\lambda_1=a$ then $(\lambda_2,\lambda_3,\ldots)$ is a partition of $b$.  Clearly any partition of $b$ is a coarsening of $1^b$ and so $\lambda=(a,\lambda_2,\lambda_3,\ldots)$ is a coarsening of $(a,1^b)$. If $\lambda_1>a$ then for some $i$ we have $\lambda_i>a$, $\lambda_{i+1} \leq a$.  We may assume inductively that $\mu=(\lambda_1,\ldots,\lambda_{i-1},\lambda_i-1,\lambda_{i+1},\ldots)$ is a coarsening of $(a,1^{b-1})$. Hence $\mu \cup \{1\}$ is a coarsening of $(a,1^b)$ and $\lambda$ is a coarsening of $\mu\cup  \{1\}$  and hence of $(a,1^b)$.
  \end{example}

    \begin{corollary}    Let $1  \leq r\leq  n$ and let $a_1,\ldots,a_m$ be positive integers and $ b_1,\ldots,b_m$ be non-negarive integers such that $a_i+b_i=r$ for $1\leq i\leq m$. Then the tilting module 
    $$T=\bigoplus_{i=1}^m \tbw^{a_i}E \otimes E^{\otimes b_i}$$
    for $S(n,r)$,   has the double centraliser property.
  \end{corollary}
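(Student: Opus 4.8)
The plan is to rewrite $T$ as a direct sum of modules $\tbw^\lambda E$ with $\lambda$ a partition of $r$, identify $\zeta_s(T)$, compute its coarsening using Example 4.5, and then apply Proposition 4.4.

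First I would note that $\tbw^1 E=E$, so $E^{\otimes b_i}$ is the tensor product of $b_i$ copies of $\tbw^1 E$, and hence $\tbw^{a_i}E\otimes E^{\otimes b_i}\cong\tbw^{(a_i,1^{b_i})}E$. Since $a_i\ge 1$, the sequence $(a_i,1^{b_i})$ is a partition of $a_i+b_i=r$; as $r\le n$, this partition and its transpose $(1+b_i,1^{a_i-1})$ have at most $n$ parts, so both lie in $\Lambda^+(n,r)$ and $\tbw^{(a_i,1^{b_i})}E\ne 0$. Therefore $T\cong\bigoplus_{i=1}^m\tbw^{(a_i,1^{b_i})}E$, and grouping together equal partitions puts $T$ in the form $\bigoplus_{\lambda\in\Lambda^+(n,r)}(\tbw^\lambda E)^{(d_\lambda)}$ to which Proposition 4.4 applies; by Lemma 4.2 the multiplicities $d_\lambda$ are well defined, and $\zeta_s(T)=\{(a_i,1^{b_i}):1\le i\le m\}$ (in particular $T$ is a tilting module).

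Next I would compute the coarsening $\hatzeta_s(T)$. A partition is a coarsening of an element of a union of sets exactly when it is a coarsening of an element of one of those sets, so $\hatzeta_s(T)$ is the union over $i$ of the coarsenings of the singletons $\{(a_i,1^{b_i})\}$. By Example 4.5, applied with $a=a_i$, $b=b_i$ (and noting $a_i+b_i=r\le n$), the coarsening of $\{(a_i,1^{b_i})\}$ is $\{\lambda\in\Lambda^+(n,r):\lambda_1\ge a_i\}$. Hence $\hatzeta_s(T)=\{\lambda\in\Lambda^+(n,r):\lambda_1\ge a\}$, where $a=\min_{1\le i\le m}a_i$.

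Finally I would check that this set is co-saturated. Suppose $\mu\in\hatzeta_s(T)$, so $\mu_1\ge a$, and let $\lambda\in\Lambda^+(n,r)$ with $\lambda\ge\mu$ in the dominance order. Taking $i=1$ in the defining inequalities of the dominance order gives $\lambda_1\ge\mu_1\ge a$, so $\lambda\in\hatzeta_s(T)$ as well; thus $\hatzeta_s(T)$ is co-saturated, and Proposition 4.4 gives that $T$ has the double centraliser property. I do not anticipate a genuine obstacle here: the only points requiring any care are the identification of $\tbw^{a_i}E\otimes E^{\otimes b_i}$ with $\tbw^{(a_i,1^{b_i})}E$, the elementary fact that coarsening commutes with unions, and the bookkeeping around the hypothesis $r\le n$, which is precisely what guarantees that every partition of $r$ in sight — each $(a_i,1^{b_i})$ and its transpose — lies in $\Lambda^+(n,r)$, so that the relevant modules are nonzero and both Proposition 4.4 and Example 4.5 are applicable.
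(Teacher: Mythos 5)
Your proof is correct and follows essentially the same route as the paper: identify $\zeta_s(T)=\{(a_i,1^{b_i})\}$, compute $\hatzeta_s(T)=\{\lambda:\lambda_1\ge\min a_i\}$ via Example 4.5, observe this is co-saturated, and apply Proposition 4.4. You spell out a few steps the paper leaves implicit (the identification $\tbw^{a_i}E\otimes E^{\otimes b_i}\cong\tbw^{(a_i,1^{b_i})}E$ and the explicit dominance-order check of co-saturation), but the argument is the same.
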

  
  \begin{proof}   Let  $\sigma_i=\{(a_i,1^{b_i})\}$. Then  $\hatsigma_i=\{\lambda\in \Par(r) \vert \lambda_1\geq a_i\}$ so that
  \begin{align*}\hatzeta_s(T)&=\bigcup_{i=1}^m \hatsigma_i=\bigcup_{i=1}^m \{\lambda\in \Par(r) \vert \lambda_1\geq a_i\}\cr
  &=\{\lambda\in \Par(r) \vert \lambda_1\geq a\}
  \end{align*}
  where $a$ is the minimum of $a_1,\ldots,a_m$, by the example,  and this is a co-saturated set so $T$ has the double centraliser property, by Proposition 4.4.
   \end{proof}

 \begin{remark} Let $\sigma$ be a finite set of finite  strings of non-negative integers of fixed degree $r$. It would be interesting to have a precise combinatorial criterion on $\sigma$ for the  $S(n,r)$-module $\bigoplus_{\lambda\in \sigma}  \tbw^\lambda E$, to have the double centraliser property.  Some examples in which this fails are given in \cite[Section 6]{LieModules}. We shall also see, in Section 7 below,  that it fails for the $S(4,4)$-module $\tbw^2 E \otimes \tbw^2E$ in characteristic $2$.
 \end{remark}


 \section{The double centraliser property for  some modules for the Hecke algebra}

\q We will be working with quantised   Young permutation  modules over a Hecke algebra. But first we record the following in the general context.  It will be used to twist with the sign representation. We leave the details to the reader.

  \begin{remark}  Let $R$ be a finite dimensional algebra over a field $F$ and let $\alpha$ be an algebra automorphism of $R$.   For a finite dimensional module $V$, affording representation $\rho:R\to \End_F(V)$,  we denote by $V^\alpha$ the corresponding twisted module, i.e. the vector space $V$ regarded as an $R$-module via the representation $\rho\circ \alpha: R\to \End_F(V)$. The annihilator $\Ann_R{V^\alpha}$ is equal to $\alpha^{-1}(\Ann_R(V))$.  Further $\DEnd_R(V^\alpha)=\DEnd_R(V)$ and   $V$ has the double centraliser property if and only if $V^\alpha$ has.
\end{remark}

\q We now make a general remark on   double centralisers.  Let $S$ be a finite dimensional algebra over a field $F$. Let $e$ be a non-zero idempotent in $S$. Thus we have the algebra $eSe$.  
The argument given below is  taken from the proof of \cite[Proposition 7.1]{LieModules}  (but since the context is slightly different we give it again).

\begin{proposition}  Suppose $T$ is a finite dimensional module such that the natural map $\End_S(T)\to \End_{eSe}(eT)$ is an isomorphism. If $T$ is a double centraliser module for $S$ then $eT$ is a double centraliser module for $eSe$.
\end{proposition}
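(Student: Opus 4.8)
The plan is to exploit the double centraliser property of $T$ as an $S$-module, transport everything to $eSe$, and use the hypothesis that $\End_S(T)\to\End_{eSe}(eT)$ is an isomorphism together with the standard compatibility of the Schur functor $V\mapsto eV$ with endomorphism rings. First I would set $\Gamma=\End_S(T)$ and note that, by hypothesis, the natural map $\Gamma\to\End_{eSe}(eT)$ is an isomorphism; via this isomorphism I regard $eT$ as a $\Gamma$-module in the obvious way, compatibly with its $eSe$-module structure. The assumption that $T$ is a double centraliser module for $S$ says precisely that the structure map $S\to\End_\Gamma(T)$ is surjective. Applying the exact functor $e(-)e$ (i.e. left-multiply and pre/post-compose appropriately) one gets a commutative square relating $S\to\End_\Gamma(T)$ and $eSe\to\End_\Gamma(eT)$.

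The key step is to show that the natural map $eSe\to\DEnd_{eSe}(eT)=\End_\Gamma(eT)$ is surjective. Here I would argue as follows: take any $\phi\in\End_\Gamma(eT)$. Using the idempotent $e$, extend $\phi$ to an $F$-linear endomorphism $\tilde\phi$ of $T$ by declaring it to be $\phi$ on $eT$ and, say, $0$ on a $\Gamma$-complement — but that is not canonical, so instead I would follow the $\text{\cite[Proposition 7.1]{LieModules}}$ argument directly: since $\Gamma=\End_S(T)$ and every element of $\Gamma$ restricts to an endomorphism of $eT$ (because $e$ is central in $\Gamma$ — indeed $e$ acts on $T$ through $S$, hence commutes with $\Gamma$ only if $e\in$ center, which need not hold, so one should be careful here). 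The clean way is: $eT$ is a $\Gamma$-module with $\Gamma$ acting by restriction of the $S$-endomorphisms (this uses that $e\in S$ and $\Gamma$-maps are $S$-linear, so they preserve $eT$), and then $\End_\Gamma(eT)$ receives a map from $\End_\Gamma(T)$ by restriction. Since $T$ is an $S$-$\Gamma$-bimodule that is a double centraliser module, $S\twoheadrightarrow\End_\Gamma(T)$; composing with restriction to $eT$ and observing that this restriction map factors through $eSe$ (because $eSe$ is exactly the image of $S$ acting on $eT$-type data) gives that $eSe\to\End_\Gamma(eT)$ is surjective, and by the hypothesis $\End_\Gamma(eT)=\End_{eSe}(eT)$ identifies this with $\DEnd_{eSe}(eT)$.

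The main obstacle is making the middle identification rigorous, namely that $\End_{\End_S(T)}(eT)=\End_{\End_{eSe}(eT)}(eT)=\DEnd_{eSe}(eT)$: this is where the isomorphism hypothesis $\End_S(T)\cong\End_{eSe}(eT)$ is used, since it guarantees the $\Gamma$-action and the $\End_{eSe}(eT)$-action on $eT$ coincide, so the two double centralisers agree. I would also need to verify that the restriction map $\End_\Gamma(T)\to\End_\Gamma(eT)$ is surjective onto the relevant subalgebra; for this I would use that $eT$ is a direct summand of $T$ as an $\Gamma$-module (it is the image of the idempotent endomorphism ``multiplication by $e$'', which lies in $\End_S(T)$ provided $e$ is central in $S$ — in the applications $e$ is a central idempotent, or at least one may replace the statement by the appropriate Schur-functor version), so restriction to a direct summand of a module is always surjective on endomorphisms that respect the summand. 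Putting these together yields that $eSe\to\DEnd_{eSe}(eT)$ is surjective, i.e. $eT$ is a double centraliser module for $eSe$, completing the proof.
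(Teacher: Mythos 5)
There is a genuine confusion in your proposal that would sink the argument as written, even though the underlying idea is sound. You repeatedly worry that ``multiplication by $e$'' is only a $\Gamma$-endomorphism of $T$ if $e$ is central in $S$, and you end up hedging by assuming centrality --- but in the paper's intended application ($e$ the Schur-functor idempotent in $S(n,n)$) $e$ is \emph{not} central, so that hedge is not available. The point you are missing is that multiplication by $e$ need not (and should not) lie in $\Gamma=\End_S(T)$; it lies in $\End_\Gamma(T)=\DEnd_S(T)$, and this requires \emph{no} centrality at all: for any $g\in\Gamma$ one has $g(ex)=eg(x)$ simply because $g$ is $S$-linear and $e\in S$. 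Thus $\bar e:=\rho(e)$ is an idempotent in $\End_\Gamma(T)$, so $T=eT\oplus(1-e)T$ as $\Gamma$-modules, automatically. You appear to conflate $\End_S(T)$ with $\End_\Gamma(T)$ at exactly this point, which is what produces the false obstacle. Relatedly, the ``restriction map $\End_\Gamma(T)\to\End_\Gamma(eT)$'' you invoke is not well-defined: a general $h\in\End_\Gamma(T)$ need not preserve $eT$ (since $\End_\Gamma(T)$ is noncommutative and $h$ need not commute with $\bar e$). The correct object is the corner $\bar e\,\End_\Gamma(T)\,\bar e\cong\End_\Gamma(eT)$, which is the image of $eSe$ under the surjection $\rho\colon S\to\End_\Gamma(T)$; composing with the isomorphism $\DEnd_{eSe}(eT)=\End_{\End_{eSe}(eT)}(eT)=\End_\Gamma(eT)$ coming from the stated hypothesis then finishes the argument.

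Once these confusions are cleared, your route is essentially the same as the paper's, which works directly with the extension $\hatphi$ of $\phi\in\DEnd_{eSe}(eT)$ defined to be $\phi$ on $eT$ and $0$ on $(1-e)T$ --- exactly the ``non-canonical'' extension you rejected, except that $(1-e)T$ is perfectly canonical once $e$ is chosen, and is a $\Gamma$-submodule by the $S$-linearity observation above. The paper checks by hand that $\hatphi$ commutes with every $g\in\End_S(T)$ (on $eT$ because $g|_{eT}\in\End_{eSe}(eT)$ commutes with $\phi$; on $(1-e)T$ trivially), so $\hatphi\in\DEnd_S(T)=\rho(S)$, and then reads off $\phi=\rho_e(ese)$. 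Notice also that in that direct computation the isomorphism hypothesis is not actually used to show $\hatphi\in\DEnd_S(T)$ --- one only needs that restrictions $g|_{eT}$ of elements of $\End_S(T)$ land in $\End_{eSe}(eT)$, which is automatic --- whereas your corner-algebra route does genuinely need the hypothesis to identify $\End_\Gamma(eT)$ with $\DEnd_{eSe}(eT)$.
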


\begin{proof}  Let $\phi\in \DEnd_{eSe}(eT)$. We define $\hatphi\in \End_F(T)$ by 
$$\hatphi(x)=\begin{cases} \phi(x), & \hbox{ if } x\in eT;\cr
0, & \hbox{ if } x\in (1-e)T.
\end{cases}$$

Let $g\in \End_S(T)$ and let $f=g\vert_{eT}\in \End_{eSe}(eT)$.  Then for $x\in eT$, we have
$$\hatphi\circ g(x)=\phi(f(x))=f(\phi(x))=g\circ \hatphi(x)$$
and, for $x\in (1-e)T$, we have 
$$\hatphi\circ g(x)=\hatphi(g(1-e)x)=\hatphi((1-e)g(x))=0$$
and $g\hatphi(x)=0$ (since $x\in (1-e)T$). 
Hence we have  $\hatphi \circ g=g\circ\hatphi$ and therefore $\hatphi\in \DEnd_S(T)$. 

\q  Since $T$ has the double centraliser property for some $s\in S$  we have $\hatphi(x)=\rho(s)(x)$, for all $x\in T$,  where $\rho:S\to \End_F(T)$ is the representation afforded by $T$. Now for $x\in eT$ we have $\phi(x)=e\phi(x)=e\hatphi(x)=esx=esex$, so we have $\phi=\rho_e(ese)$, where $\rho_e:eSe\to \End_F(eT)$ is the representation afforded by the $eSe$-module $eT$. Hence $eT$ has the double centraliser property.
\end{proof}

  \q We shall apply the results of Section 4   to the representation theory of Hecke algebras.   We fix a positive integer $n$.  As usual we take $A$ to be a Dedekind domain or a field. Let $q$ be a unit in $A$. We have the Hecke algebra $\Hec(n)_{A,q}$, with the usual generators $T_i=T_{i,A,q}$, $1\leq i < n$, and basis $T_w=T_{w,A,q}$, $w\in \Sym(n)$.

\q   We have the involutory  automorphism $\sharp=\sharp_{A,q}:\Hec(n)_{A,q}\to \Hec(n)_{A,q}$ given on generators by $\sharp(T_i)=-T_i +(q-1)1$, $1\leq i<n$. For a $\Hec(n)_{A,q}$-module $U$ affording the representation $\rho:\Hec(n)_{A,q}\to \End_A(U)$,  write $U^\sharp$ for the $\Hec(n)_{A,q}$ module on vector space $U$ with action $\rho\circ \sharp$.  For $\alpha=(\alpha_1,\alpha_2,\ldots)\in \Lambda(n,n)$ we have the corresponding Young subgroup $\Sym(\alpha)\cong \Sym(\alpha_1)\times \Sym(\alpha_2)\cdots$ of $\Sym(n)$ and corresponding elements  
  $$x(\alpha)=x(\alpha)_{A,q}=\sum_{w\in \Sym(\alpha)} T_{w,A,q}$$
  and
  $$y(\alpha)=\sum_{w\in \Sym(\alpha)} (-q)^{N-l(w)} T_{w,A,q}$$
  
  of $\Hec(n)_{A,q}$, where $N={n\choose 2}$ and where $l(w)$ denotes the length of an element $w$ of $\Sym(n)$.

  \q We define a subset  $J(\alpha)$ of $\{1,\ldots,n-1\}$ as follows.   We define $J_1(\alpha)$ to be the set of $i$ such that $1\leq i\leq \alpha_1$ and 
  , for $r>1$ define $J_r(\alpha)$ to be the set of $i$  such that $\alpha_1+\cdots+\alpha_{r-1} < i < \alpha_1+\cdots+\alpha_r$.  We put $J(\alpha)=\bigcup_{r=1}^n J_r(\alpha)$. We have the Young subalgebra $\Hec(\alpha)_{A,q}$, generated by all $T_i$, with $i\in J(\alpha)$.  We write simply  $A$ for the regular $A$-module regarded as a $\Hec(n)_{A,q}$-module on which each $T_i$ acts as multiplication by $q$, and write $A_s$ for the  regular $A$-module regarded as a $\Hec(n)_{A,q}$-module on which each $T_i$ acts as multiplication by $-1$.   For a partition $\lambda$ of $n$  we have the  \lq\lq permutation module" 
  $M(\lambda)_{A,q}=\Hec(n)_{A,q}\otimes_{\Hec(\lambda)_{A,q}}  A$ and the  \lq\lq signed permutation module"  $M_s(\lambda)_{A,q}=\Hec(n)_{A,q}\otimes_{\Hec(\lambda)_{A,q}} A_s$. The module $M(\lambda)_{A,q}$ may be realised as the left ideal of $\Hec(n)_{A,q}$ generated by $x(\lambda)_{A,q}$ and the module $M_s(\lambda)_{A,q}$ may be realised as the left ideal of $\Hec(n)_{A,q}$ generated by $y(\alpha)_{A,q}$. 
  
  \q By a  (quantised) Young permutation module for $\Hec(n)_{A,q}$ we mean a finite direct sum of copies of modules of the form $M(\lambda)_{A,q}$, $\lambda\in \Lambda^+(n,n)$  and by a (quantised) signed Young permutation module $\Hec(n)_{A,q}$ we mean a finite direct sum of copies of modules of the form $M_s(\lambda)_{A,q}$, $\lambda\in \Lambda^+(n,n)$. 
  
  \q  Let $\lambda$ be a partition of $n$.   A left coset $w\Sym(\lambda)$ of $w\in \Sym(n)$ contains a unique element of minimal length (see \cite[Lemma 1.1]{DJ1}). We write ${\mathcal D}_\lambda$ for the set of minimal length elements of left cosets of $\Sym(\lambda)$ in $\Sym(n)$.   Then  $M(\lambda)_{A,q}$ has $A$-basis $T_{w,A,q}x(\lambda)_{A,q}$, $x\in {\mathcal D}_\lambda$ and $M_s(\lambda)_{A,q}$ has $A$-basis $T_{w,A,q}y(\lambda)_{A,q}$, $x\in {\mathcal D}_\lambda$.

  \q We shall need a result of Dipper and James,  \cite[Theorem 3.4]{DJ1}, on the invariance of spaces of homomorphisms between  Young permutation modules.   Suppose that  $\phi:A\to \barA$ is a  ring homomorphism, where $\barA$ is also a Dedekind domain or a field. Let $\barq=\phi(q)$. Then we have the induced $A$-algebra homomorphism $\Hec(n)_{A,q}\to \Hec(n)_{\barA,\barq}$. We identify $\barA\otimes_A \Hec(n)_{A,q}$ with $\Hec(n)_{\barA,\barq}$ via the induced isomorphism $\barA\otimes_A \Hec(n)_{A,q}\to \Hec(n)_{\barA,\barq}$. Thus, if $M$ is a $\Hec(n)_{A,q}$-module then we have the $\Hec(n)_{\barA,\barq}$-module $M_{\barA}=\barA\otimes_A M$ obtained by base change. In particular we have $\barA \otimes_A M(\lambda)_{A,q}=M(\lambda)_{\barA,\barq}$, for $\lambda\in \Par(n)$.
  
  \begin{proposition} Let $M,N$ be Young permutation modules (\resp signed Young permutation modules) for $\Hec(n)_{A,q}$. Then the natural map
  $$\barA\otimes_A \Hom_{\Hec(n)_{A,q}}(M,N)\to \Hom_{\Hec(n)_{\barA,\barq}}(M_{\barA},N_{\barA})$$
  is an isomorphism.
   \end{proposition}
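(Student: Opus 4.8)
The plan is to reduce the statement to the case of a single pair of permutation modules, invoke the Dipper--James basis theorem for the unsigned case, and then deduce the signed case by twisting with the involution $\sharp$. Since the natural map is additive in $M$ and in $N$, and both $\Hom_{\Hec(n)_{A,q}}(-,-)$ and the base-change functor $\barA\otimes_A-$ commute with finite direct sums, it suffices to treat the case $M=M(\lambda)_{A,q}$, $N=M(\mu)_{A,q}$ (\resp $M=M_s(\lambda)_{A,q}$, $N=M_s(\mu)_{A,q}$) for partitions $\lambda,\mu$ of $n$.

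For such a pair in the unsigned case I would appeal directly to \cite[Theorem 3.4]{DJ1}: that result supplies an $A$-basis of $\Hom_{\Hec(n)_{A,q}}(M(\lambda)_{A,q},M(\mu)_{A,q})$ by explicit homomorphisms indexed by a combinatorial set that depends only on $\lambda$ and $\mu$, with defining formulas (in the standard bases of the permutation modules) that make sense over any base ring with invertible parameter and that specialise along $\phi\colon A\to\barA$ to the analogous basis of $\Hom_{\Hec(n)_{\barA,\barq}}(M(\lambda)_{\barA,\barq},M(\mu)_{\barA,\barq})$. Granting this, both sides of the natural map are free modules whose bases correspond under the map, so the map is an isomorphism. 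This citation is doing all the real work; the only point requiring care is that the DJ1 basis is literally ``the same'' over $A$ and over $\barA$ in a way compatible with specialisation, which is precisely what prevents the possible failure of base change for $\Hom$ of non-projective modules.

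For the signed case I would reduce to the unsigned one by twisting with $\sharp=\sharp_{A,q}$. Because $\sharp(T_i)=-T_i+(q-1)1$ lies in the subalgebra generated by $T_i$, the automorphism $\sharp$ maps each Young subalgebra $\Hec(\lambda)_{A,q}$ onto itself and so restricts to an involutory automorphism of it, under which the module $A$ (with $T_i\mapsto q$) becomes $A_s$ (with $T_i\mapsto-1$). Induction from a subalgebra commutes with twisting by an automorphism of the ambient algebra that preserves the subalgebra, so $M_s(\lambda)_{A,q}\cong M(\lambda)_{A,q}^\sharp$, and likewise over $\barA$. Twisting by $\sharp$ is a self-equivalence of the category of $\Hec(n)_{A,q}$-modules, hence identifies $\Hom_{\Hec(n)_{A,q}}(M_s(\lambda)_{A,q},M_s(\mu)_{A,q})$ with $\Hom_{\Hec(n)_{A,q}}(M(\lambda)_{A,q},M(\mu)_{A,q})$; and since $\sharp_{A,q}$ and $\sharp_{\barA,\barq}$ are given by the same formula and are therefore intertwined by $\phi$, all these identifications respect base change, so the signed case follows from the unsigned one. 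The main obstacle is really just a bookkeeping one: pinning down that the Dipper--James basis is canonical enough to commute with the homomorphism $\phi\colon A\to\barA$; everything else is formal.
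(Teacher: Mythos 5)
Your proof is correct and follows essentially the same route as the paper: reduce by additivity to a single pair $M(\lambda)_{A,q}$, $M(\mu)_{A,q}$, then invoke the Dipper--James explicit basis \cite[Theorem 3.4]{DJ1} of the $\Hom$ space, which is compatible with specialisation. The only difference is that the paper disposes of the signed case with ``the other case is similar,'' while you make this precise by twisting with $\sharp$ (using $M_s(\lambda)_{A,q}\cong M(\lambda)_{A,q}^\sharp$, which the paper itself records later via \cite[4.4,(10) and 2.1,(20)]{q-Donk}); this is a clean way to formalise the paper's shorthand and is consistent with its framework.
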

   
   \begin{proof} We assume that $M,N$ are Young modules (the other case is similar). It is enough to consider the case in which $M=M(\lambda)_{A,q}$, $N=M(\mu)_{A,q}$, for some $\lambda,\mu\in \Par(n)$, and  the result holds in this case by the explicit description of spaces of homomorphisms given by Dipper and James in \cite[Theorem 3.4]{DJ1}. Thus by Corollary 2.13 we have the following.
   \end{proof}

   \begin{corollary}   Let $P$ be a   Young permutation module or a signed permutation module.    For any $A$-algebra $B$ the natural map
$$\Psi_B: B^\op\otimes_A \End_R(P)\to \End_{R_B}(P_B)$$
  is an isomorphism. 
\end{corollary}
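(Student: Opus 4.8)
The plan is to obtain this as an immediate application of Corollary 2.13(ii), taken with the $A$-algebra $\Lambda$ appearing there specialised to $R=\Hec(n)_{A,q}$ and the $\Lambda$-module $P$ specialised to our $P$ viewed as an $R$-module; then $\End_\Lambda(P)$ becomes $\End_R(P)$ and the map of Corollary 2.13(ii) is exactly $\Psi_B$.

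First I would verify the standing hypotheses of Corollary 2.13, namely that $R$ and $P$ are free of finite rank over $A$. For $R$ this is clear, since $\Hec(n)_{A,q}$ has $A$-basis $\{T_{w,A,q}\mid w\in\Sym(n)\}$. For $P$, recall that by definition $P$ is a finite direct sum of modules $M(\lambda)_{A,q}$ (resp.\ $M_s(\lambda)_{A,q}$), $\lambda\in\Lambda^+(n,n)$, and that $M(\lambda)_{A,q}$ is $A$-free on $\{T_{w,A,q}\,x(\lambda)_{A,q}\mid w\in{\mathcal D}_\lambda\}$ (resp.\ $M_s(\lambda)_{A,q}$ is $A$-free on $\{T_{w,A,q}\,y(\lambda)_{A,q}\mid w\in{\mathcal D}_\lambda\}$); hence $P$ is free of finite rank over $A$.

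The one substantive point is the residue-field condition in Corollary 2.13(ii). Fix a maximal ideal $\M$ of $A$ and put $F=A/\M$, with $\phi\colon A\to F$ the quotient homomorphism; its kernel is $\M$ and its target is a field, in particular a Dedekind domain or a field, so Proposition 5.3 applies with $\barA=F$ and $\barq=\phi(q)$. Since $P$ is itself a Young permutation module (resp.\ a signed Young permutation module) over $\Hec(n)_{A,q}$ and $P_F=F\otimes_A P$ is the corresponding Young (resp.\ signed Young) permutation module over $\Hec(n)_{F,\barq}=R_F$, Proposition 5.3 applied with $M=N=P$ shows that the natural map $F\otimes_A\End_R(P)\to\End_{R_F}(P_F)$ is an isomorphism, hence in particular surjective. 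So the hypothesis of Corollary 2.13(ii) holds at every maximal ideal, and the corollary gives that $\Psi_B$ is an isomorphism for every $A$-algebra $B$. I do not anticipate any genuine obstacle here: all the real work is already contained in the Dipper--James description of homomorphism spaces between Young permutation modules (which underlies Proposition 5.3) and in the localisation arguments of Section 2; the remaining points are the bookkeeping checks above, together with the observation that a residue field is a legitimate base for Proposition 5.3. (One could alternatively reduce the signed case to the unsigned one by twisting with the automorphism $\sharp$ and using Remark 5.1, but this is unnecessary since Proposition 5.3 treats both cases directly.)
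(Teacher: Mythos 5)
Your proposal is correct and matches the paper's own argument: the paper derives Corollary 5.4 from Corollary 2.13(ii), using Proposition 5.3 (whose content is the Dipper--James description of $\Hom$-spaces between quantised Young permutation modules) to supply the residue-field surjectivity hypothesis. You have merely made explicit the freeness bookkeeping that the paper leaves implicit.
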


  \q We now focus on  the case in which the coefficient ring is a field $F$ and write simply $\Hec(n)$ for $\Hec(n)_{F,q}$, write   $M(\lambda)$ for $M(\lambda)_{F,q}$ and write $M_s(\lambda)$ for $M_s(\lambda)_{F,q}$, $\lambda\in \Par(n)$. 
  
    \q We shall need  the symmetric powers of the natural module for the corresponding  quantum general linear group  $G(n)$.    For $a\geq 0$ we write $S^aE$ for the $a$th (quantised) symmetric power of $E$ and for a finite string $\alpha=(\alpha_1,\alpha_2,\ldots)$ of non-negative integers, write $S^\alpha E$ for the tensor product\\
   $S^{\alpha_1}E\otimes S^{\alpha_2}E \otimes \cdots$, as in \cite[Section 2.1]{q-Donk}.  For $\lambda\in \Lambda^+(n)$ we write $I(\lambda)$ for the injective envelope of $L(\lambda)$ in the polynomial category.  If $\lambda$ has degree $r$ then $I(\lambda)$ is polynomial of degree $r$.  If $r\leq n$ then for $\lambda\in \Lambda^+(n,r)$ the module $S^\lambda E$ is injective, in the polynomial category. 
   
   \q   We identify $\Hec(n)$ with  the algebra $eS(n,n)e$, for a certain idempotent $e\in S(n,n)$, as in \cite[Section 2.1]{q-Donk}   (or  \cite[Chapter 6]{EGS} in the classical case $q=1$).  We have the Schur functor $f:\mod(S(n,n))\to \mod(\Hec(n))$, as in \cite[Chapter 6]{EGS}. Then we    $M(\lambda)=f S^\lambda E$ and $M_s(\lambda)=f \tbw^\lambda E$, $\lambda\in \Par(n)$. We 
  have the module   $Y(\lambda)= fI(\lambda)$ and $\Sp(\lambda)=f\nabla(\lambda)$, the quantised  Young  module  and Specht module corresponding to $\lambda$, for $\lambda\in \Par(n)$.  We also have  signed versions $M_s(\lambda)=f \tbw^\lambda E$, $Y_s(\lambda)=f T(\lambda)$, of the Young permutation module and Young module, for $\lambda\in \Par(\lambda)$. 
  (The modules $Y_s(\lambda)$, $\lambda\in \Par(n)$, are, in the classical case,  special cases of the modules called signed Young modules in \cite{DonkSymmExt}. However the modules $Y_s(\lambda)$ are sufficient for our present purpose.)

  \q The modules $Y(\lambda)$ (\resp $Y_s(\lambda)$), for $\lambda\in \Par(n)$, are indecomposable and pairwise non-isomorphic.
  
  \q By a quantised Young module (\resp  signed quantised  Young module) we mean a finite direct sum of modules $Y(\lambda)$ (\resp $Y_s(\lambda)$),  $\lambda\in \Par(n)$.    By a  quantised  Young permutation module (\resp signed  quantised  Young permutation module) we mean a finite direct sum modules $M(\lambda)$  
   (\resp $M_s(\lambda)$), $\lambda\in \Par(n)$.  
   
   \q For a Young module $U$  we set $\pi(U)=\{\lambda\in \Par(n) \vert (U \vert Y(\lambda))\neq 0\}$. For a signed Young module $V$ we set 
$\pi_s(V)=\{\lambda\in \Par(n) \vert (V \vert Y_s(\lambda))\neq 0\}$. 

\begin{remark} We shall need to use the fact that if $T,U$ are tilting modules for $S(n,n)$ then the Schur functor induces an isomorphism \\
$\Hom_{S(n,n)}(T,U)\to  \Hom_{\Hec(n)}(fT,fU)$. It suffices to observe this in the case in which $U=T(\lambda)$, for some $\lambda\in \Par(n)$.  Moreover, since $T(\lambda)$ is a direct summand of $\tbw^{\lambda'} E$, it is enough to note this in the case $U=\tbw^{\lambda'}E$, and this is true by \cite[2.1,(16)((iii)]{q-Donk}.

\end{remark}

\q Our main interest is in the study of double centralisers and annihilator ideals of Young permutation modules and signed Young permutation modules. But for completeness we also consider   the situation for Young modules and signed Young modules.

  \q  By \cite[4.4, (10)]{q-Donk} and \cite[2.1,(20)]{q-Donk} we have $M(\lambda)^\sharp=M_s(\lambda)$, for $\lambda\in \Par(n)$.   A similar relationship exists  between, the Young modules and their signed analogues. This is not recorded in \cite{q-Donk}. However the result is
  $$Y_s(\lambda)=Y(\lambda')^\sharp \eqno{(1)}$$
  for $\lambda\in \Par(n)$, and this follows from the argument of \cite[(3.6) Lemma (ii)]{DTilt}, applied in the set-up of \cite[Section 4.4]{q-Donk}.
  
     \q We now consider  the double centraliser property for (quantised) Young permutation modules  and their signed versions.  For $\lambda\in \Par(n)$ we may write  $M(\lambda)=\bigoplus_{\mu\in \Par(n)} Y(\mu)^{(a_{\lambda\mu})}$  where $a_{\lambda\mu}=(M(\lambda)\vert  Y(\mu))$.  Moreover, we have $a_{\lambda\lambda}=1$ and $\lambda\geq\mu$ whenever $a_{\lambda\mu}\neq 0$.  Thus the matrix $(a_{\lambda\mu})_{\lambda,\mu\in \Par(n)}$ is invertible and this gives  the following for Young permutation modules in the case in which $A$ is a field.  The result then  follows in general by considering $M_F$, where $F$ is a residue field of $A$. The version for signed Young permutation modules may be obtained in a similar fashion.

  \begin{lemma}  Let $A$ be a Dedekind domain or a field and $q$ a unit in $A$.  If $M$ is a Young permutation module (\resp signed permutation module)  then the multiplicities $d_\lambda$ (\resp $e_\lambda$) , $\lambda\in \Par(n)$, in an expression $M=\bigoplus_{\lambda} M(\lambda)_{A,q}^{(d_\lambda)}$ (\resp   $M=\bigoplus_{\lambda} M_s(\lambda)_{A,q}^{(e_\lambda)}$) 
  are uniquely determined. 
  
 \end{lemma}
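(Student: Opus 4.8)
The plan is to reduce the statement over a Dedekind domain to the case where $A$ is a field, and then to extract the uniqueness from the unitriangularity of the change-of-basis matrix between Young permutation modules and Young modules. First I would observe that it suffices to prove uniqueness when $A$ is a field: if $M = \bigoplus_\lambda M(\lambda)_{A,q}^{(d_\lambda)}$ over a Dedekind domain $A$, then for any residue field $F$ of $A$ we obtain $M_F = \bigoplus_\lambda M(\lambda)_{F,q}^{(d_\lambda)}$ by base change (using $\barA \otimes_A M(\lambda)_{A,q} = M(\lambda)_{\barA,\barq}$), and by Krull--Schmidt over the field $F$ the multiplicities $d_\lambda$ are then pinned down, hence determined independently of the chosen decomposition over $A$. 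So the real content is over a field.

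Over the field $F$, the key input is the paragraph preceding the lemma: for $\lambda \in \Par(n)$ one has $M(\lambda) = \bigoplus_{\mu \in \Par(n)} Y(\mu)^{(a_{\lambda\mu})}$ with $a_{\lambda\lambda} = 1$ and $a_{\lambda\mu} \neq 0$ only when $\lambda \geq \mu$ in dominance order. Thus the matrix $(a_{\lambda\mu})_{\lambda,\mu \in \Par(n)}$, after fixing a linear extension of the dominance order, is lower unitriangular, hence invertible over $\zed$ (and over $F$). Now if $M = \bigoplus_\lambda M(\lambda)^{(d_\lambda)}$, then substituting gives $M = \bigoplus_\mu Y(\mu)^{(c_\mu)}$ where $c_\mu = \sum_\lambda d_\lambda a_{\lambda\mu}$. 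By Krull--Schmidt (the $Y(\mu)$ being indecomposable and pairwise non-isomorphic), the numbers $c_\mu$ are determined by $M$. Since the matrix $(a_{\lambda\mu})$ is invertible, the vector $(d_\lambda)$ is recovered as $(c_\mu)$ times the inverse matrix, so the $d_\lambda$ are determined by $M$. For the signed version, I would argue identically using $M_s(\lambda) = \bigoplus_\mu Y_s(\mu)^{(b_{\lambda\mu})}$; the required unitriangularity follows from the Young-module case via the twist $Y_s(\lambda) = Y(\lambda')^\sharp$ and $M_s(\lambda) = M(\lambda)^\sharp$ together with Remark~2.1-type behaviour of $\sharp$ under decompositions (the twist is an equivalence, so it carries the unitriangular decomposition of $M(\lambda)$ into a corresponding decomposition of $M_s(\lambda)$, with dominance order transported appropriately through transposition, which reverses the order but keeps the matrix triangular after relabelling).

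The only mild subtlety — and the step I would treat most carefully — is making sure the triangularity of $(b_{\lambda\mu})$ in the signed case is correctly deduced; transposing partitions reverses dominance, so one should phrase the argument so that after the relabelling $\lambda \mapsto \lambda'$ the matrix is again unitriangular with respect to the dominance order, which it is because $a_{\lambda\mu} \neq 0 \Rightarrow \lambda \geq \mu \Rightarrow \lambda' \leq \mu'$ and $a_{\lambda\lambda} = 1$. Everything else is a routine appeal to Krull--Schmidt and to the base-change identity for permutation modules already recorded in the excerpt. I do not expect any genuine obstacle; the proof is essentially bookkeeping once the unitriangular decomposition into Young modules is in hand.
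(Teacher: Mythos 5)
Your proof is correct and follows essentially the same route as the paper: reduce to a residue field by base change, then use the unitriangular decomposition $M(\lambda)=\bigoplus_{\mu}Y(\mu)^{(a_{\lambda\mu})}$ (with $a_{\lambda\lambda}=1$ and triangularity in dominance order) together with Krull--Schmidt for the Young modules, and handle the signed case by the $\sharp$-twist. The paper gives only a brief sketch in the paragraph preceding the lemma; your version spells out the Krull--Schmidt step and the order-reversal bookkeeping for the signed case, but the argument is the same.
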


 \q Let $A$ be a Dedekind domain or a field and $q$ a unit in $A$.   For a Young permutation module $M=\bigoplus_{\lambda} M(\lambda)_{A,q}^{(d_\lambda)}$ we set  $\zeta(M)=\{\lambda\in \Par(n) \vert d_\lambda\neq 0\}$.  For a signed Young permutation module $N=\bigoplus_{\lambda} M_s(\lambda)_{A,q}^{(e_\lambda)}$  we set $\zeta_s(N)=\{\lambda\in  \Par(n) \vert e_\lambda\neq 0\}$.  

\q Note that this notation is consistent with the earlier usage in the sense that if $T$ is tilting module for $S(n,n)$ then $\pi_s(T)=\pi_s(fT)$ and if $T$ is a tilting module of exterior type, for $S(n,n)$ then $\zeta_s(T)=\zeta_s(fT)$.

\begin{proposition} Let $V$ be a Young permutation module (\resp signed Young permutation module)  and suppose that $\hatzeta(V)$ (\resp $\hatzeta_s(V)$) is co-saturated. Then $V$ has the double centraliser property.
\end{proposition}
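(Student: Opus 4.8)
The plan is to reduce the statement for Young permutation modules over an arbitrary Dedekind domain $A$ to the corresponding statement for the $q$-Schur algebra $S(n,n)$ over a field, where it has already been established in Section~4. First I would reduce to the case in which $A$ is a field. By Corollary~5.4 the base change property on endomorphism algebras holds for Young permutation modules, so the triple $(A,\Hec(n)_{A,q},V)$ satisfies the standard hypotheses; moreover $\Hec(n)_{K,q}$ is semisimple for $K$ the field of fractions when $q$ is a unit (or one can simply work under the hypotheses of Proposition~3.6). Thus by Proposition~3.6 it suffices to check that $V_F$ has the double centraliser property for every residue field $F$ of $A$. Note also that forming $\hatzeta$ commutes with base change, since the decomposition multiplicities $d_\lambda$ are intrinsic by Lemma~5.8 and stable under base change (each $M(\lambda)_{A,q}$ base-changes to $M(\lambda)_{F,q}$), so the co-saturation hypothesis on $\hatzeta(V)$ passes to $V_F$.

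Now assume $A=F$ is a field. In the Young permutation case, $V=\bigoplus_\lambda M(\lambda)^{(d_\lambda)}$ with $M(\lambda)=fS^\lambda E$, where $f$ is the Schur functor for $S(n,n)=eS(n,n)e$ with $\Hec(n)=eS(n,n)e$. Apply the sign-twist: by $M(\lambda)^\sharp=M_s(\lambda)$ (the identity before Lemma~5.8) and Remark~5.1, $V$ has the double centraliser property if and only if $V^\sharp=\bigoplus_\lambda M_s(\lambda)^{(d_\lambda)}$ does, and $V^\sharp=f\bigl(\bigoplus_\lambda (\tbw^\lambda E)^{(d_\lambda)}\bigr)$ with $\zeta_s(V^\sharp)=\zeta(V)$ and hence $\hatzeta_s(V^\sharp)=\hatzeta(V)$ co-saturated. (In the signed Young permutation case one applies $\sharp$ in the other direction, or argues directly — the hypothesis is already phrased in terms of $\hatzeta_s$.) So it suffices to treat a module of the form $T'=\bigoplus_\lambda (\tbw^\lambda E)^{(d_\lambda)}$ for $S(n,n)$ with $\hatzeta_s(T')$ co-saturated, and prove that $fT'$ has the double centraliser property over $\Hec(n)$.

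For this I would invoke Proposition~4.4 (with $r=n$, so $n\geq r$ is satisfied): since $\hatzeta_s(T')$ is co-saturated, $T'$ has the double centraliser property as an $S(n,n)$-module. To transfer this along the Schur functor, apply Proposition~5.7 with $S=S(n,n)$ and the idempotent $e$: the hypothesis there is that the natural map $\End_{S(n,n)}(T')\to\End_{\Hec(n)}(eT')=\End_{\Hec(n)}(fT')$ is an isomorphism, which holds because $T'$ is a tilting module (a direct sum of $\tbw^\lambda E$'s) and, by Remark~5.6, the Schur functor induces an isomorphism on $\Hom$-spaces between tilting modules. Hence $fT'=eT'$ has the double centraliser property over $eS(n,n)e=\Hec(n)$, which is what we needed. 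The main obstacle is bookkeeping rather than anything deep: making sure the sign-twist correctly interchanges symmetric and exterior types and matches $\zeta$ with $\zeta_s$, and confirming that the co-saturation condition on $\hatzeta$ genuinely survives base change to each residue field so that Proposition~3.6 applies uniformly.
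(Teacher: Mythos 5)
Your field-case argument is exactly the paper's proof: twist by $\sharp$ to pass between Young and signed Young permutation modules, realize the signed module as the image under the Schur functor of a tilting module $T=\bigoplus_\lambda(\tbw^\lambda E)^{(d_\lambda)}$, observe $\hatzeta_s(T)$ is co-saturated, invoke Proposition~4.4 for the double centraliser property of $T$ over $S(n,n)$, and then transfer to $\Hec(n)=eS(n,n)e$ via Proposition~5.2 using the fact (Remark~5.5) that the Schur functor is faithful and full on $\Hom$-spaces between tilting modules. (Your internal references are shifted by one throughout --- the idempotent transfer result is Proposition~5.2 not 5.7, the $\Hom$-space remark is 5.5 not 5.6, the multiplicity lemma is 5.6 not 5.8.)

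One caveat worth flagging: the proposition, as the paper intends it (see the ``We now focus on the case in which the coefficient ring is a field $F$'' paragraph preceding Remark~5.5), is a statement over the fixed field $F$, so your opening Dedekind-domain-to-field reduction is extraneous here. It is also not quite right as written: the hypothesis of Proposition~3.6 requires $R_K$ semisimple, and $\Hec(n)_{K,q}$ need \emph{not} be semisimple over the fraction field $K$ when $q$ is a root of unity. This is precisely why the paper postpones the general-$A$ statement to Proposition~5.9 and Corollary~5.10 and there routes through the auxiliary coefficient ring $F[t,t^{-1}]$, whose fraction field $F(t)$ gives a semisimple Hecke algebra at parameter $t$. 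Since that preamble is not needed for the proposition as stated, it does not create a gap in your proof of the field-case assertion.
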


\begin{proof} We consider the signed version. (The other case follows from Remark 5.1).  Let $\sigma=\zeta_s(V)$. We may assume that $V=\bigoplus_{\lambda\in \sigma} M_s(\lambda)$, by \cite[(1.5)]{LieModules}.  Consider the $S(n,n)$-module 
$$T=\bigoplus_{\lambda\in \sigma} \tbw^{\lambda} E.$$
Since $\hatsigma$ is co-saturated the $S(n.n)$-module $T$ has the double centraliser property, by Proposition 4.4.  Applying the Schur functor we obtain that $V$ has the double centraliser property, by Proposition 5.2. 
\end{proof}

\begin{corollary} Let $V$ be a Young module (\resp signed Young module) and suppose that $\pi(V)$ (\resp $\pi_s(V)$) is co-saturated. Then $V$ has the double centraliser property.
\end{corollary}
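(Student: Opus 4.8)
The plan is to reduce both cases to the Schur algebra $S(n,n)$ via the Schur functor $f$, exactly along the lines of the proofs of Propositions 5.6 and 5.7, using Proposition 5.2, Remark 5.5, and the double centraliser results of Section 4; the $\sharp$-twist (Remark 5.1) together with the identity $Y(\mu)^\sharp=Y_s(\mu')$ lets one pass between the signed and unsigned cases, bearing in mind that transposition reverses the dominance order.

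For the signed Young module case, let $V$ be a signed Young module with $\pi_s(V)$ co-saturated. Since the double centraliser property depends only on the set of indecomposable summands (\cite[(1.5)]{LieModules}) I may assume $V=\bigoplus_{\lambda\in\pi_s(V)}Y_s(\lambda)$; as $Y_s(\lambda)=fT(\lambda)$, this is $V=fT$ with $T=\bigoplus_{\lambda\in\pi_s(V)}T(\lambda)$ a tilting $S(n,n)$-module and $\pi_s(T)=\pi_s(V)$. First I would establish that $T$ is a double centraliser module for $S(n,n)$ — this is where Section 4 enters, either directly from Proposition 4.1 (after matching up the dominance order), or by realising $T$ inside a direct sum of exterior‑power tilting modules $\tbw^{\lambda'}E$, using the decomposition $\tbw^{\lambda'}E=T(\lambda)\oplus C$, and checking that the coarsening of the relevant index set is co-saturated so that Proposition 4.4 applies. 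Granting this, Remark 5.5 supplies the isomorphism $\End_{S(n,n)}(T)\to\End_{\Hec(n)}(fT)$ needed to apply Proposition 5.2 with $S=S(n,n)$ and $e$ the idempotent with $eSe=\Hec(n)$, and I conclude that $V=fT$ is a double centraliser module.

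For the Young module case, let $V$ be a Young module with $\pi(V)$ co-saturated. Two routes are available. The first is to twist: by $Y(\mu)^\sharp=Y_s(\mu')$ the module $V^\sharp$ is a signed Young module with $\pi_s(V^\sharp)=\pi(V)'$, again of the type handled in the first part (transposition reversing dominance), and Remark 5.1 transfers the conclusion back to $V$. The second, more self-contained route uses that the transition matrix $(a_{\lambda\mu})$ in $M(\lambda)=\bigoplus_\mu Y(\mu)^{(a_{\lambda\mu})}$ is unitriangular with respect to dominance: when $\pi(V)$ is co-saturated, $\bigoplus_{\lambda\in\pi(V)}M(\lambda)$ has precisely the indecomposable summands $\{Y(\mu):\mu\in\pi(V)\}$, hence the same double centraliser status as $V$; and since passing to a coarsening only moves a partition up in the dominance order, its coarsening-closed index set is again the co-saturated set $\pi(V)$, so that Proposition 5.7 applies.

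The step I expect to be the main obstacle is the combinatorial bookkeeping: checking that ``$\pi_s(V)$ (resp. $\pi(V)$) co-saturated'' is exactly the hypothesis that feeds Propositions 4.1/4.4 (resp. 5.7) through the identifications $Y_s(\lambda)=fT(\lambda)$ and $M_s(\lambda)=f\tbw^\lambda E$ — which carry a transposition of partitions, since $\tbw^\lambda E$ has highest weight $\lambda'$ — and through the $\sharp$-twist. Once the interplay of ``saturated'', ``co-saturated'', coarsening, and transposition is pinned down, the double centraliser conclusion in each case is a formal consequence of Proposition 5.2, Remark 5.5, and the results of Section 4.
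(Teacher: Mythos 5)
Your second route for the Young‑module case is precisely the paper's argument: replace $V$ by $U=\bigoplus_{\lambda\in\pi(V)}M(\lambda)$, observe that $M(\lambda)$ has indecomposable summands only among the $Y(\mu)$ with $\mu\trianglerighteq\lambda$, so that co-saturation of $\pi(V)$ forces $U$ and $V$ to have the same set of indecomposable summands, and then apply Proposition 5.7 (noting $\hatzeta(U)=\pi(V)$ is still co-saturated) together with \cite[(1.5)]{LieModules}. This is exactly what the paper does; the paper only writes out the Young-module case.

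Your other two routes share a genuine gap, and it is the same gap in both: transposition reverses the dominance order, so it exchanges ``co-saturated'' with ``saturated'' rather than preserving them.

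In the signed Young case you want $T=\bigoplus_{\lambda\in\pi_s(V)}T(\lambda)$ to be a double-centraliser $S(n,n)$-module, with $\pi_s(T)=\pi_s(V)$ co-saturated. But Proposition 4.1 requires $\pi_s(T)$ to be \emph{saturated} (downward closed) — the opposite condition — and no transposition occurs in the identification $\pi_s(T)=\pi_s(fT)$, so there is no ``matching up of the dominance order'' available here. The exterior-power alternative does not close the gap either: $T'=\bigoplus_{\lambda\in\pi_s(V)}\tbw^{\lambda'}E$ does satisfy the hypothesis of Proposition 4.4 (indeed $(n)\in\pi_s(V)$ gives $(1^n)\in\zeta_s(T')$, so $\hatzeta_s(T')=\Par(n)$), but since $\tbw^{\lambda'}E=T(\lambda)\oplus C$ with $C$ a sum of $T(\mu)$, $\mu<\lambda$, and $\pi_s(V)$ is \emph{upward}, not downward, closed, those $T(\mu)$ generally do not lie in your index set. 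Hence $T'$ and $T$ have different indecomposable summands, and \cite[(1.5)]{LieModules} does not transfer the double centraliser property from $T'$ to $T$. For route 1 of the Young-module case the same reversal bites: $Y(\mu)^\sharp=Y_s(\mu')$ gives $\pi_s(V^\sharp)=\pi(V)'$, which is \emph{saturated} when $\pi(V)$ is co-saturated — exactly the wrong hypothesis to feed back into the signed case as you stated it. The $\sharp$-twist interchanges the two order conditions, so it does not let you deduce the Young case from the co-saturated signed case; this point needs to be confronted rather than parenthesised.
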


\begin{proof} We prove the version for Young modules.  By \cite[(1.5)]{LieModules} we may assume that $V=\bigoplus_{\lambda\in \pi(V)} Y(\lambda)$. Consider the Young permutation module $U=\bigoplus_{\lambda\in \pi(V)} M(\lambda)$.  Since $Y(\lambda)$ is a direct summand of $M(\lambda)$, for $\lambda\in \Par(n)$, the module $V$ is a direct summand of $U$. Moreover, an indecomposable summand of $M(\lambda)$ has the form $Y(\mu)$, for some $\mu\geq \lambda$ and, since $\pi(V)$ is co-saturated, each summand of $U$ has the form $Y(\mu)$ for some $\mu\in \pi(V)$.  Hence $V$ and $U$ have the same indecomposable summands (up to isomorphism) and hence $V$ has the double centraliser property if and only if $U$ has, by  \cite[(1.5)]{LieModules}. But $U$ has the double centraliser property by the Proposition, so we are done.
\end{proof}

 \q We now turn  our attention to the integral case.   Let $A$ be a Dedekind domain or field and let $q$ be a unit in $A$.  Let $K$ be the field of fractions of $A$. 
 
 \q Recall that the  Hecke algebra $\Hec(n)_{K,q}$ is semisimple if either $q$ is not a root of unity or if $q=1$ and $K$ has characteristic $0$.

    \q For $\lambda\in \Par(n)$ we write $\dim(\lambda)$ for the dimension of the Specht module $\Sp(\lambda)$. Thus $\dim(\lambda)$ is the dimension of the ordinary irreducible module corresponding to $\lambda$  for the symmetric group and the for the Hecke algebra at non-zero parameter which is not a root of unity.

\begin{proposition} Let $A$ be a Dedekind domain or a field and let $q$ be a unit in $A$. Let  $P$ be  a  $\Hec(n)_{A,q}$-module.  Suppose that either 
$P$ is a Young permutation module and that $\hatzeta(P)$  is co-saturated or 
$P$ is a signed Young permutation and $\hatzeta_s(P)$ is co-saturated. We write $\sigma$ for $\zeta(P)$ if $P$ is a Young permutation module and for $\zeta_s(P)$ if $P$ is a signed Young permutation module.

(i) If $A=F$ is a field then     
 $$\dim_F \Ann_{\Hec(n)_{F,q}}(P)=n!-\sum_{\lambda\in \hatsigma} \dim(\lambda)^2$$
 and 
 $$\dim_F \DEnd_{\Hec(n)_{F,q}}(P)=\sum_{\lambda\in \hatsigma} \dim(\lambda)^2.$$

 (ii) If $F$ is a residue field of $A$  and $\barq$ is the image of $q$ in $F$ then   the natural maps 
$$F\otimes_A \Ann_{\Hec(n)_{A,q}}(P)\to \Ann_{\Hec(n)_{F,\barq}}(P_F)$$
 and 
 $$F\otimes_A \DEnd_{\Hec(n)_{A,q}}(P)\to \DEnd_{\Hec(n)_{F,\barq}}(P_F)$$
 are isomorphisms.
 \end{proposition}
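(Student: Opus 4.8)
The plan is to prove part~(i) over an arbitrary field and then to deduce part~(ii) by localisation. Throughout I take $P$ to be a Young permutation module; the signed case reduces to this one by twisting with $\sharp$. Indeed $M_s(\lambda)^\sharp=M(\lambda)$, so if $P=\bigoplus_{\lambda\in\sigma}M_s(\lambda)^{(e_\lambda)}$ is a signed Young permutation module with $\sigma=\zeta_s(P)$, then $P^\sharp=\bigoplus_{\lambda\in\sigma}M(\lambda)^{(e_\lambda)}$ is a Young permutation module with $\zeta(P^\sharp)=\sigma$, hence with the same coarsening closure $\hatsigma$; by Remark~5.1 we have $\DEnd_R(P)=\DEnd_R(P^\sharp)$ and $\Ann_R(P)=\sharp^{-1}(\Ann_R(P^\sharp))$, and $\sharp$ is compatible with base change, so all the assertions transfer. (The standard hypotheses of Section~3 hold for Young permutation modules over $\Hec(n)_{A,q}$ by the results of Dipper and James underlying Corollary~5.4, so Lemma~3.4 is applicable.)

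For part~(i), write $\sigma=\zeta(P)$, so $P=\bigoplus_{\lambda\in\sigma}M(\lambda)_{F,q}^{(d_\lambda)}$ with all $d_\lambda\neq 0$. Since every member of $\hatsigma=\hatzeta(P)$ is a coarsening of, hence dominates, some member of $\sigma$, and since $\hatsigma$ is co-saturated, we have $\hatsigma=\{\mu\in\Par(n)\vert \mu\geq\lambda\text{ for some }\lambda\in\sigma\}$. By the double centraliser property for Young permutation modules with co-saturated coarsening closure (Proposition~5.7), $P$ has the double centraliser property over $F$, so $\dim_F\Ann_{\Hec(n)_{F,q}}(P)+\dim_F\DEnd_{\Hec(n)_{F,q}}(P)=\dim_F\Hec(n)_{F,q}=n!$. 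It therefore suffices to prove the two inequalities $\dim_F\Ann_{\Hec(n)_{F,q}}(P)\geq n!-\sum_{\mu\in\hatsigma}\dim(\mu)^2$ and $\dim_F\DEnd_{\Hec(n)_{F,q}}(P)\geq\sum_{\mu\in\hatsigma}\dim(\mu)^2$, for then both are equalities. To obtain them I pass to a discrete valuation ring with semisimple generic fibre: put $A=F[[u]]$, $K=F((u))$, and let $\tilde q=q(1+u)$; this is a unit of $A$ reducing to $q$ modulo $u$, and $(1+u)^N$ has degree $N\geq 1$ in $u$, so $\tilde q^N\neq 1$ for $N\geq 1$, whence $\tilde q$ is not a root of unity in $K$ and $\Hec(n)_{K,\tilde q}$ is semisimple. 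Set $\tilde P=\bigoplus_{\lambda\in\sigma}M(\lambda)_{A,\tilde q}^{(d_\lambda)}$, so $\tilde P_F=P$. Over the semisimple algebra $\Hec(n)_{K,\tilde q}$, Young's rule gives $M(\lambda)_{K,\tilde q}=\bigoplus_\mu\Sp(\mu)^{(K_{\mu\lambda})}$ with $K_{\mu\lambda}\neq 0$ precisely when $\mu\geq\lambda$; since the $\Sp(\mu)$, $\mu\in\Par(n)$, form a complete set of simple modules with $\dim\Sp(\mu)=\dim(\mu)$, the simple constituents of $\tilde P_K$ are exactly the $\Sp(\mu)$ with $\mu\in\hatsigma$, and the Jacobson density theorem gives $\dim_K\DEnd_{\Hec(n)_{K,\tilde q}}(\tilde P_K)=\sum_{\mu\in\hatsigma}\dim(\mu)^2$ and $\dim_K\Ann_{\Hec(n)_{K,\tilde q}}(\tilde P_K)=n!-\sum_{\mu\in\hatsigma}\dim(\mu)^2$. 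Applying Lemma~3.4 to the discrete valuation ring $A$, with $\Hec(n)_{A,\tilde q}$ and the module $\tilde P$ (whose reduction modulo $u$ is $P$), yields exactly the two required inequalities.

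For part~(ii), suppose first that $A$ is a discrete valuation ring with residue field $F$ and field of fractions $K$. Since $P=\bigoplus_{\lambda\in\sigma}M(\lambda)_{A,q}^{(d_\lambda)}$ is defined over $A$, both $P_F$ and $P_K$ are Young permutation modules with $\zeta$-set $\sigma$, hence with the same co-saturated coarsening closure $\hatsigma$; applying part~(i) over $F$ and over $K$ gives $\dim_F\Ann_{\Hec(n)_{F,\barq}}(P_F)=n!-\sum_{\mu\in\hatsigma}\dim(\mu)^2=\dim_K\Ann_{\Hec(n)_{K,q}}(P_K)$ and $\dim_F\DEnd_{\Hec(n)_{F,\barq}}(P_F)=\dim_K\DEnd_{\Hec(n)_{K,q}}(P_K)$. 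By the equality clauses of Lemma~3.4, the natural maps $F\otimes_A\Ann_{\Hec(n)_{A,q}}(P)\to\Ann_{\Hec(n)_{F,\barq}}(P_F)$ and $F\otimes_A\DEnd_{\Hec(n)_{A,q}}(P)\to\DEnd_{\Hec(n)_{F,\barq}}(P_F)$ are isomorphisms. For a general Dedekind domain $A$ and maximal ideal $\M$, these are maps of $A/\M$-modules, so it suffices to check they become isomorphisms after localising at $\M$; localisation at $\M$ reduces to the discrete valuation ring $A_\M$ just treated, using that annihilators and double centralisers commute with localisation (Lemma~2.7 and the discussion preceding Lemma~3.4). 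Hence the maps are isomorphisms.

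The crux is part~(i) for fields $F$ over which $\Hec(n)_{F,q}$ is not semisimple: there neither Lemma~3.5 nor Proposition~3.6 applies, and one has no direct control of $\dim_F\Ann_{\Hec(n)_{F,q}}(P)$ on its own. The device that makes it go through is the deformation $A=F[[u]]$, $\tilde q=q(1+u)$ (not a root of unity), which furnishes a discrete valuation ring with residue field $F$ and semisimple generic fibre; the two inequalities coming from Lemma~3.4 are then squeezed against the identity $\dim_F\Ann+\dim_F\DEnd=n!$ supplied by the double centraliser property over $F$, forcing both dimensions to their semisimple values, after which everything is a routine localisation argument.
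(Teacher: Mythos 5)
Your proof is correct and follows the same overall strategy as the paper: lift $P$ to a Hecke algebra over a Dedekind/valuation ring with semisimple generic fibre, compute the annihilator and double-centraliser dimensions over the fraction field via Young's rule and Jacobson density, and then use the double centraliser property over $F$ (Proposition~5.7) together with the dimension inequalities to pin down the dimensions over $F$. The two places you differ are cosmetic. First, you lift to $F[[u]]$ with deformed parameter $\tilde q = q(1+u)$, whereas the paper lifts to $F[t,t^{-1}]$ with the generic parameter $t$ and specialisation $t\mapsto q$; both give the needed integral form with residue field $F$ and semisimple generic fibre (the paper's choice is slightly more canonical in that it is the ``generic'' Hecke algebra, but yours is just as valid once one checks, as you did, that $q(1+u)$ is not a root of unity in $F((u))$ — though your remark about ``$(1+u)^N$ has degree $N$'' is a bit loose, since this is a power series; the clean argument in characteristic $p$ is that $(1+u)^{p^am}=(1+u^{p^a})^m$ has nonzero coefficient of $u^{p^a}$ when $p\nmid m$). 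Second, you apply Lemma~3.4 and run the squeeze explicitly against the identity $\dim\Ann + \dim\DEnd = n!$, whereas the paper cites Lemma~3.5 directly; since the paper's proof of Lemma~3.5 is exactly this squeeze, you have in effect re-derived that lemma inline. Your explicit handling of the signed case via the involution $\sharp$ and Remark~5.1 is also fine and slightly more detailed than the paper's ``the signed case is similar''.
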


 \begin{proof}  We treat the case of Young permutation modules. The signed case is similar.

 (i) Consider the Hecke algebra $\Hec(n)_{F[t,t^{-1}],t}$.  We write \\
 $P=\bigoplus_{\lambda\in \Par(n)} M(\lambda)_{F,q}^{(d_\lambda)}$, for certain non-negative integers $d_\lambda$. We define $\tilde P$ to be the  $\Hec(n)_{F[t,t^{-1}],t}$-module  
 $\bigoplus_{\lambda\in \Par(n)} M(\lambda)_{F[t,t^{-1}],q}^{(d_\lambda)}.$
 
    \q We regard $F$ as a reside field of  the ring of Laurent polynomials $F[t,t^{-1}]$  with corresponding ideal generated by $t-q$.  Thus we have ${\tilde P}_F=P$.   The field of fractions of $F[t,t^{-1}]$ is the field of rational functions $F(t)$ and $\Hec(n)_{F(t),t}$ is semisimple.  Moreover, $P$ has the double centraliser property, by Proposition 5.7, so that, by Lemma 3.5, the natural map
 $$F\otimes_{F[t,t^{-1}]} \Ann_{\Hec(n)_{F[t,t^{-1}],t}}(\tilde P)\to \Ann_{\Hec(n)_{F,q}}(P)\eqno{(1)}$$
 is an isomorphism.  Now the irreducible constituents of the semisimple $\Hec(n)_{F(t),t}$-module ${\tilde P}_{F(t)}$ are the Specht modules $\Sp(\lambda)_{F(t),t}$, with $\lambda\in \hatsigma$.  Hence 
 \begin{align*}\dim_{F(t)} \Ann_{\Hec(n)_{F(t),t}}({\tilde P}_{F(t)})&=\dim_{F(t)} \Hec(n)_{F(t),t} - \sum_{\lambda\in \hatsigma} \dim(\lambda)^2\cr
& =n!- \sum_{\lambda\in \hatsigma} \dim(\lambda)^2.
 \end{align*}
 Hence the rank of $\Ann_{\Hec(n)_{F[t,t^{-1}]}}(\tilde P)$ is $n!-\sum_{\lambda\in \hatsigma} \dim(\lambda)^2$ and by (1) this is also the $F$-dimension of $\Ann_{\Hec(n)_{F,q}}(P)$, giving the first assertion. The second assertion follows from the fact that we have the short exact sequence
 $$0\to \Ann_{\Hec(n)_{F,q}}(P)\to \Hec(n)_{F,q}\to \DEnd_{\Hec(n)_{F,q}}(P)\to 0.$$
 
 (ii)    Let $K$ be the field of fractions of $A$. By part (i) we have 
 $$\dim_K \Ann_{\Hec(n)_{K,q}}(P_K)=\dim_F \Ann_{\Hec(n)_{F,\barq}}(P_F)=n!-\sum_{\lambda\in \hatsigma} \dim(\lambda)^2$$
 and
 $$\dim_K \DEnd_{\Hec(n)_{K,q}}(P_K)=\dim_F \DEnd_{\Hec(n)_{F,\barq}}(P_F)=\sum_{\lambda\in \hatsigma} \dim(\lambda)^2.$$
 Hence the torsion  free  rank of   $\Ann_{\Hec(n)_{A,q}}(P)$, as an $A$-module,  is equal to  $\dim_F \Ann_{\Hec(n)_{F,\barq}}(P_F)$ and the torsion free rank of $ \DEnd_{\Hec(n)_{A,q}}(P)$ is equal to $\dim_F \DEnd_{\Hec(n)_{F,\barq}}(P_F)$. Hence we have
 $$\dim_F F\otimes_A \Ann_{\Hec(n)_{A,q}}(P)= \dim_F \Ann_{\Hec(n)_{F,\barq}}(P_F)$$
 and
 $$\dim_F F\otimes_A  \DEnd_{\Hec(n)_{A,q}}(P)=\dim_F \DEnd_{\Hec(n)_{F,\barq}}(P_F)$$
 so the injective maps 
  $$F\otimes_A \Ann_{\Hec(n)_{A,q}}(P) \to \Ann_{\Hec(n)_{F,\barq}}(P_F)$$
and
 $$F\otimes_A  \DEnd_{\Hec(n)_{A,q}}(P)\to   \DEnd_{\Hec(n)_{F,\barq}}(P_F)$$ 
 are isomorphisms.
  \end{proof}

     \q Now from Proposition 3.3 we get the following.

 \begin{corollary}   Let $A$ be a Dedekind domain or a field and let $q$ be a unit in $A$.  Let  $P$ be  a  
 $\Hec(n)_{A,q}$-module.  Suppose that either 
$P$ is a Young permutation module and that $\hatzeta(P)$  is co-saturated or 
$P$ is a signed Young permutation and $\hatzeta_s(P)$ is co-saturated. Then $P$ has the double centraliser property and for every $A$-algebra $B$ the natural map 
$$B^\op\otimes_A \DEnd_{\Hec(n)_{A,q}}(P)\to \DEnd_{B\otimes_A \Hec(n)_{A,q}}(B\otimes_A P)$$
is an isomorphism.
 \end{corollary}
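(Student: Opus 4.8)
The plan is to derive both assertions from the computations of Proposition~5.8 together with the localisation machinery of Part~I. First I would check that the triple $(A,R,P)$, with $R=\Hec(n)_{A,q}$, satisfies the standard hypotheses of Section~3: both $R$ and $P$ are free of finite rank over $A$ (for $P$ one uses the explicit $A$-basis of each $M(\lambda)_{A,q}$, respectively $M_s(\lambda)_{A,q}$, recalled before Proposition~5.4), $\End_R(P)$ is free of finite rank over $A$ by the Dipper--James description of homomorphism spaces \cite[Theorem~3.4]{DJ1}, and the base change property on endomorphism algebras is Corollary~5.5. In particular Proposition~3.3 is available for $(A,R,P)$.

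The base change assertion is then immediate: by Proposition~5.8(ii) the natural map $F\otimes_A\DEnd_R(P)\to\DEnd_{R_F}(P_F)$ is an isomorphism, hence in particular surjective, for every residue field $F$ of $A$, and feeding this into Proposition~3.3 shows that $\Psi_B\colon B^\op\otimes_A\DEnd_R(P)\to\DEnd_{R_B}(P_B)$ is an isomorphism for every $A$-algebra $B$.

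For the double centraliser property of $P$ over $A$ itself, the case in which $A$ is a field is precisely Proposition~5.7 applied to $P$, whose set $\hatzeta(P)$ (respectively $\hatzeta_s(P)$) is co-saturated by hypothesis. So suppose $A$ is a Dedekind domain, let $\nu\colon R\to\DEnd_R(P)$ be the natural map and put $C=\Coker(\nu)$; since the complex $0\to\Ann_R(P)\to R\xrightarrow{\ \nu\ }\DEnd_R(P)$ is exact, $P$ has the double centraliser property precisely when $C=0$. As $\DEnd_R(P)$ is a submodule of $\End_A(P)$, which is free of finite rank over the Noetherian ring $A$, both $\DEnd_R(P)$ and $C$ are finitely generated over $A$. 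Now fix a maximal ideal $\M$, put $F=A/\M$ and let $\barq$ be the image of $q$; identifying $R\otimes_A F$ with $\Hec(n)_{F,\barq}$ and using the isomorphism $F\otimes_A\DEnd_R(P)\xrightarrow{\ \sim\ }\DEnd_{\Hec(n)_{F,\barq}}(P_F)$ of Proposition~5.8(ii), the base-changed map $\nu\otimes_A\id_F$ is identified with the natural map $\Hec(n)_{F,\barq}\to\DEnd_{\Hec(n)_{F,\barq}}(P_F)$. The latter is surjective by Proposition~5.7, since $P_F$ is again a Young permutation module (respectively signed Young permutation module) with $\hatzeta(P_F)=\hatzeta(P)$ (respectively $\hatzeta_s(P_F)=\hatzeta_s(P)$) co-saturated. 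Hence $C\otimes_A F=\Coker(\nu\otimes_A\id_F)=0$, i.e. $C=\M C$; as $C$ is finitely generated, Nakayama's lemma (applied after localising at $\M$) gives $C_\M=0$, and since this holds for every maximal ideal $\M$ we conclude $C=0$ by \cite[Proposition~3.8]{AM}. Thus $\nu$ is surjective and $P$ has the double centraliser property over $A$.

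The step I expect to be the crux is this last one. The temptation is to quote Proposition~3.6 directly, but its hypothesis that $R_K=\Hec(n)_{K,q}$ be semisimple (with $K$ the field of fractions of $A$) can genuinely fail, for instance when $A=\zed[\zeta]$ and $q=\zeta$ is a primitive $p$-th root of unity with $n\geq p$. The way around this is that Proposition~5.7 yields the double centraliser property over \emph{every} field regardless of semisimplicity, while Proposition~5.8(ii) provides exactly the compatibility of $\DEnd$ with reduction modulo $\M$ needed to transport it from the residue fields back to $A$ by the elementary Nakayama argument above.
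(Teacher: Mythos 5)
Your argument is correct, and the discussion at the end shows you have put your finger on a genuine subtlety that the paper glosses over. The paper's ``proof'' of this corollary is the single sentence ``Now from Proposition~3.3 we get the following.'' Proposition~3.3, whose hypothesis is supplied by Proposition~5.9(ii), does give the base change statement exactly as you say; but it says nothing about the double centraliser property of $P$ over $A$ itself. The natural companion result, Proposition~3.6, carries the hypothesis that $R_K$ be semisimple, and, as you observe, this can fail for $\Hec(n)_{K,q}$ when $q$ is a root of unity in $K$. Your way around this --- identify $\Coker(\nu)\otimes_A F$ with $\Coker(\nu_F)$ via Proposition~5.9(ii), kill it over every residue field by Proposition~5.7, and then finish with Nakayama after localising --- is correct and clean. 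It is worth noting that an alternative repair is to inspect the proofs of Lemma~3.5 and Proposition~3.6 and observe that the semisimplicity of $R_K$ is used there only to guarantee that $P_K$ has the double centraliser property; since Proposition~5.7 already gives this over the fraction field directly, the descending-index argument of Proposition~3.6 goes through unchanged for the Hecke algebra case. Both repairs amount to a local Nakayama-type argument; yours is the more standard packaging, while Proposition~3.6 has the extra feature of also producing explicit information about annihilators along the way. Either is a legitimate completion of what the paper leaves implicit.
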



\section{Example: Partition Algebras}

\q In the examples we shall only treat the case of Young modules and leave the analogous case of signed Young modules to the interested reader.

\q We fix a positive integer $n$.  Let $A$ be a Dedekind domain or a field  and $q$ a unit in $A$. By a hook permutation module for $\Hec(n)_{A,q}$ we mean a non-zero  finite direct sum of modules of form $M(a,1^b)_{A,q}$, with $a+b=n$.  

\begin{definition} We define the index  $\idx(H)$   of a  hook permutation module  $H$ for $\Hec(n)_{A,q}$  to be  the smallest $a\geq 1$ such that $H$, written as a direct sum of modules of the form $M(\lambda)_{A,q}$, with $\lambda\in \Par(n)$, the module  $M(a,1^b)$ appears as a direct summand of $H$. 
\end{definition} 

\begin{definition}  For $1\leq m\leq n$ we set 
$$N_{n,m}=\sum_\lambda \dim(\lambda)^2$$
where the sum is over all partitions $\lambda$  of $n$ with first part at least $m$.

\end{definition}

We note that if $H$ is a  hook permutation module then $\hatzeta(H)$ is co-saturated and indeed $\hatzeta(H)=\{\lambda=(\lambda_1,\lambda_2,\ldots)\in \Par(n) \vert \lambda_1\geq \idx(H)\}$, by Example 4.5.

\q From Proposition 5.9, Corollary 5.10 and Corollary 2.13 we have the following.

\begin{lemma} (i) Let $F$ be a field and $0\neq q\in F$.  Let $H$ be a hook permutation module for $\Hec(n)_{F,q}$.   Then $H$ has the double centraliser property. Furthermore we have

$$\dim_F \Ann_{\Hec(n)_{F,q}}(H)=n!-N_{n,m}$$
and
$$\dim_F \DEnd_{\Hec(n)_{F,q}}(H)=N_{n,m}$$
where $m=\idx(H)$.

(ii)  Let $A$ be a Dedekind domain or a field and let $q$ be a unit in $A$.   Let $H$ be a hook permutation module for $\Hec(n)_{A,q}$.   Let $B$ be an $A$-algebra. Then $H_B$ is a double centraliser module for $B\otimes_A \Hec(n)_{A,q}$ and indeed the natural maps
$$B \otimes_A \Ann_{\Hec(n)_{A,q}}(H)\to \Ann_{B\otimes_A \Hec(n)_{A,q}}(H_B)$$
and
$$B^\op \otimes_A \DEnd_{\Hec(n)_{A,q}}(H)\to \DEnd_{B\otimes_A \Hec(n)_{A,q}}(H_B)$$
are isomorphisms.
\end{lemma}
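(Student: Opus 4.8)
The plan is to deduce Lemma 6.4 directly from the results already assembled in Sections 4 and 5, since a hook permutation module is precisely a special case of a Young permutation module with co-saturated coarsening. First I would observe, as noted just before the statement, that if $H = \bigoplus_i M(a_i,1^{b_i})_{A,q}$ with $a_i + b_i = n$ and $m = \idx(H) = \min_i a_i$, then by Example 4.5 we have $\hatsigma_i = \{\lambda \in \Par(n) \mid \lambda_1 \geq a_i\}$ for $\sigma_i = \{(a_i,1^{b_i})\}$, and hence $\hatzeta(H) = \bigcup_i \hatsigma_i = \{\lambda \in \Par(n) \mid \lambda_1 \geq m\}$, which is visibly co-saturated (if $\mu \geq \lambda$ and $\lambda_1 \geq m$ then $\mu_1 \geq \lambda_1 \geq m$, since $\mu_1 \geq \lambda_1$ always holds in the dominance order).

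For part (i), with $A = F$ a field: the co-saturation of $\hatzeta(H)$ together with Proposition 5.9(i) (applied with $\sigma = \zeta(H)$, so that $\hatsigma = \hatzeta(H)$) gives immediately
$$\dim_F \Ann_{\Hec(n)_{F,q}}(H) = n! - \sum_{\lambda \in \hatzeta(H)} \dim(\lambda)^2 = n! - N_{n,m}$$
and
$$\dim_F \DEnd_{\Hec(n)_{F,q}}(H) = \sum_{\lambda \in \hatzeta(H)} \dim(\lambda)^2 = N_{n,m},$$
where the sums are recognised as $N_{n,m}$ by Definition 6.2, since $\hatzeta(H) = \{\lambda \in \Par(n) \mid \lambda_1 \geq m\}$. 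The double centraliser property for $H$ is Proposition 5.7 (or Corollary 5.10 specialised to fields). So part (i) is essentially a matter of unwinding definitions.

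For part (ii), with $A$ a Dedekind domain or field: by Corollary 5.10, $H$ has the double centraliser property over $A$ and the base change map $B^{\op} \otimes_A \DEnd_{\Hec(n)_{A,q}}(H) \to \DEnd_{B \otimes_A \Hec(n)_{A,q}}(H_B)$ is an isomorphism for every $A$-algebra $B$; this is the second displayed isomorphism. For the first (base change on annihilators), I would invoke Corollary 2.13(i): its hypothesis is that for every residue field $F$ of $A$ the natural map $F \otimes_A \Ann(H) \to \Ann(H_F)$ is surjective, and this is exactly the content of Proposition 5.9(ii). Then Corollary 2.13(i) yields that $\Phi_B : B \otimes_A \Ann_{\Hec(n)_{A,q}}(H) \to \Ann_{B \otimes_A \Hec(n)_{A,q}}(H_B)$ is an isomorphism for all $A$-algebras $B$. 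Finally, that $H_B$ is itself a double centraliser module for $\Hec(n)_{B,\bar q}$ follows because the exact sequence $0 \to \Ann \to R \to \DEnd \to 0$ is compatible with the two base change isomorphisms just established: tensoring $0 \to \Ann_R(H) \to R \to \DEnd_R(H) \to 0$ with $B$ (using that $\DEnd_R(H)$ is $A$-free, so the sequence is split over $A$, hence remains exact after $\otimes_A B$) and identifying the outer terms via $\Phi_B$ and $\Psi_B$ with $\Ann_{R_B}(H_B)$ and $\DEnd_{R_B}(H_B)$, one sees $R_B \to \DEnd_{R_B}(H_B)$ is onto.

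I do not anticipate a genuine obstacle here: the lemma is a packaging of Proposition 5.9, Corollary 5.10, and Corollary 2.13. The one point requiring a little care is the verification that $\hatzeta(H)$ equals $\{\lambda \mid \lambda_1 \geq m\}$ and is co-saturated — this rests on Example 4.5 and the elementary fact that the first part is monotone under dominance — but this has in effect already been recorded in the paragraph preceding the statement. The only other mild subtlety is the last step of part (ii), where one must note that $\DEnd_{\Hec(n)_{A,q}}(H)$ is $A$-free (which holds by the standard hypotheses, Corollary 5.3, since $\End$ and hence $\DEnd = \End_{\End}$ are free of finite rank) so that the defining exact sequence splits over $A$ and survives base change; with that in hand the double centraliser property over $B$ is immediate.
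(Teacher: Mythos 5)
Your proposal is correct and follows exactly the paper's (implicit) proof, which simply cites Proposition 5.9, Corollary 5.10 and Corollary 2.13 after observing (via Example 4.5) that $\hatzeta(H)=\{\lambda\in\Par(n)\vert\lambda_1\geq \idx(H)\}$ is co-saturated; your final paragraph filling in why $H_B$ itself has the double centraliser property is a useful unpacking of what the paper leaves unsaid. Two trivial slips: ``Corollary 5.3'' should be Corollary 5.4, and over a Dedekind domain $\DEnd_{\Hec(n)_{A,q}}(H)$ is finitely generated torsion-free, hence projective (so flat) but not necessarily free; flatness is all your exact-sequence argument needs.
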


\q We specialise to the symmetric group case.  Let $X$ be a  $\Sym(n)$-set. We say that $X$ is a Young $\Sym(n)$-set if it is finite and each point stabilizer is a Young subgroup. For a Young $\Sym(n)$-set $X$ we write $\zeta(X)$ for the set of $\lambda\in \Par(n)$ such that $\Sym(\lambda)$ is a point stabiliser. We shall say that $X$ is hook $\Sym(n)$-set if $\zeta(X)$ consists of partitions of the form $(a,1^b)$ (with $a+b=n)$.  We define the index $\idx(X)$ of a hook $\Sym(n)$-set to be the smallest integer  $a\geq 1$ such that the Young subgroup $\Sym(a,1^b)$ is a point stabilizer.

\q We shall give an  application to partition algebras.    Let $V$ be a free abelian group of rank $n$ on basis $v_1,\ldots,v_n$. For a commutative ring $B$ we have the  $B$-module $V_B=B\otimes_\zed V$ and this is a free $B$-module with basis $1\otimes v_1,\ldots, 1\otimes v_n$.  The symmetric group $\Sym(n)$ acts on $V_B$, by permuting the basis elements, and so on the $r$-fold  tensor product  
$V_B^{\otimes r}=V_B\otimes_B \cdots \otimes_B V_B$.  Thus $V_B^{\otimes r}$ is a permutation module for $B\Sym(n)$, and hence also for $B\Sym(m)$, for $1\leq m\leq n$  (where $\Sym(m)$ is identified with the subgroup of $\Sym(n)$ consisting of the elements that fix $m+1,\ldots,n$).    Let $I(n,r)$ be the set of maps from $\{1,\ldots,r\}$ to $\{1,\ldots,n\}$. We regard $i$ as an $r$-tuple $(i_1,\ldots,i_r)$ with entries in $\{1,\ldots,n\}$ (where $i_k=i(k)$, $1\leq k\leq r$).  The elements $v_i$, $i\in I(n,r)$, form a $\zed$-basis for $V^{\otimes r}$ and  $w v_i=v_{w\circ i}$, for $w\in \Sym(n)$, $i\in I(n,r)$.  We identify $V_B^{\otimes r}$ with $V_B^{\otimes}$ in the obvious way.  Thus $V_B^{\otimes r}$ is the $B\Sym(n)$ permutation module on the $\Sym(n)$-set $I(n,r)$.

\q Now the point stabilizer of $i\in  I(n,r)$ in $\Sym(n)$, is the subgroup consisting of elements   constant on the image of $i$. Hence the stabilizer in $\Sym(n)$ is conjugate to $\Sym(a,1^b)$, where $b=|{\rm Im}(i)|$. Thus 
$$\zeta(I(n,r))=\{(a,1^b) \in \Par(n) \vert 1\leq b\leq r\}.$$ 
  Now suppose that $m\leq n$. Then $w\in \Sym(m)$ fixes $i$ if  an only if it fixes all  elements of ${\rm Im}(i)\bigcap \{1,\ldots,m\}$.  Thus, for $m\leq  n$,  $I(n,r)$ is a hook $\Sym(m)$-set and:

$$\zeta(I(n,r) |_{\Sym(m)})=\begin{cases} \{(a,1^b)\in \Par(m) \vert 1\leq b\leq r\},  &   \hbox{ if }   m=n;\cr 
 \{(a,1^b)\in \Par(m) \vert 0\leq b\leq r\},  & \hbox{ if } m<n
\end{cases}\eqno{(1)}.$$
It follows that 
$$\idx(I(n,r)|_{\Sym(m)})= m- \min(r,m)\eqno{(2).}$$
(We are writing $\min(d,e)$ for the minimum of non-negative integers $d$ and $e$.)

\q Hence we also have the following.

\begin{proposition}  Let $m\leq n$ and $r\geq 1$.  Then the $B\Sym(m)$-module  $V^{\otimes r}$ has the double centraliser property moreover the natural maps 
$$B\otimes_\zed \Ann_{\zed \Sym(m)} (V^{\otimes r} ) \to \Ann_{B\Sym(m)} (V_B^{\otimes r})$$
and 
$$B\otimes_\zed \DEnd_{\zed \Sym(m)}(V_\zed^{\otimes r})   \to \DEnd_{B\Sym(m)}(V_B^{\otimes r})$$
are isomorphisms.    As a $B$-module  $\Ann_{B\Sym(m)}(V_B^{\otimes r})$ is free of rank  $m!- N_{m,d}$  and $\DEnd_B(V_B^{\otimes r})$ is free of rank $M_{d,m}$, where $d=\min(m,r)$.

\end{proposition}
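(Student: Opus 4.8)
The plan is to recognise $V^{\otimes r}$, regarded as a module over $\zed\Sym(m)$, as a hook permutation module, and then to quote Lemma~6.3. First I would record the standard identification of the Hecke algebra $\Hec(m)_{\zed,1}$ at parameter $1$ with the group algebra $\zed\Sym(m)$ (each generator $T_i$ going to the basic transposition $s_i$), under which the quantised permutation module $M(\lambda)_{\zed,1}$ corresponds to the ordinary permutation module $\zed[\Sym(m)/\Sym(\lambda)]$, for $\lambda\in\Par(m)$. Consequently the permutation module on any hook $\Sym(m)$-set is a hook permutation module for $\Hec(m)_{\zed,1}$ in the sense of Definition~6.1. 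Now $V^{\otimes r}$ is exactly the permutation module on the $\Sym(m)$-set $I(n,r)$; decomposing it into orbits gives $V^{\otimes r}|_{\Sym(m)}\cong\bigoplus\zed[\Sym(m)/\Sym(m)_i]$, one summand for each $\Sym(m)$-orbit, and by the orbit--stabiliser computation preceding the Proposition each $\Sym(m)_i$ is conjugate to a Young subgroup of hook shape, the set of hooks occurring being $\zeta(I(n,r)|_{\Sym(m)})$ as in~(1). Thus $V^{\otimes r}$, viewed over $\zed\Sym(m)$, is a hook permutation module, and its index equals $\idx(I(n,r)|_{\Sym(m)})=m-\min(r,m)$ by~(2).

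With this in place I would apply Lemma~6.3 with $A=\zed$, $q=1$ and $B$ the given ring (any ring being a $\zed$-algebra, base change of $\Hec(m)_{\zed,1}=\zed\Sym(m)$ along $\zed\to B$ producing $B\Sym(m)$, and $V_B^{\otimes r}$ being the corresponding base change of $V^{\otimes r}$). Part~(ii) of that lemma asserts precisely that $V_B^{\otimes r}$ is a double centraliser module for $B\Sym(m)$ and that the two natural base-change maps, on the annihilator ideal and on the double endomorphism algebra, are isomorphisms; this disposes of the first two assertions.

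For the freeness and rank statements I would argue as follows. The ideal $\Ann_{\zed\Sym(m)}(V^{\otimes r})$ is a $\zed$-submodule of the free $\zed$-module $\zed\Sym(m)$, and $\DEnd_{\zed\Sym(m)}(V^{\otimes r})$ is a $\zed$-submodule of the free $\zed$-module $\End_\zed(V^{\otimes r})$; since $\zed$ is a principal ideal domain, both are free $\zed$-modules. By Lemma~6.3(ii) with $B=\que$ (the fraction field), these tensor up to $\Ann_{\que\Sym(m)}(V_{\que}^{\otimes r})$ and $\DEnd_{\que\Sym(m)}(V_{\que}^{\otimes r})$, whose $\que$-dimensions, by Lemma~6.3(i), are $m!-N_{m,m-d}$ and $N_{m,m-d}$ respectively, where $d=\min(m,r)$ (so that $m-d=m-\min(r,m)$ is the index found above). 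Hence $\Ann_{\zed\Sym(m)}(V^{\otimes r})$ and $\DEnd_{\zed\Sym(m)}(V^{\otimes r})$ are free $\zed$-modules of those ranks. Finally, base change of a free module along $\zed\to B$ produces a free $B$-module of the same rank, so the isomorphisms of Lemma~6.3(ii) show that $\Ann_{B\Sym(m)}(V_B^{\otimes r})$ and $\DEnd_{B\Sym(m)}(V_B^{\otimes r})$ are free $B$-modules of ranks $m!-N_{m,m-d}$ and $N_{m,m-d}$.

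The argument is essentially a citation of Lemma~6.3, so I do not anticipate a genuine obstacle. The one step to carry out with care is the first: verifying that the restriction to $\Sym(m)$ of the permutation module on $I(n,r)$ is a hook permutation module whose index is the value recorded in~(2). This is governed entirely by the orbit-and-stabiliser analysis already performed in~(1)--(2), so no new difficulty should arise.
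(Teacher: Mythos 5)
Your proof is correct and follows exactly the route the paper intends: the proposition appears after ``Hence we also have the following'' and is meant to be read off from the orbit--stabiliser analysis in (1)--(2) of Section~6 together with Lemma~6.3, which is precisely what you do. Worth noting is that your careful bookkeeping yields the ranks $m!-N_{m,\,m-d}$ and $N_{m,\,m-d}$ (since the index of the hook module $V^{\otimes r}|_{\Sym(m)}$ is $m-d$, and Lemma~6.3 evaluates $N$ at the degree and the index); the proposition as printed reads ``$m!-N_{m,d}$'' and ``$M_{d,m}$'', the latter undefined, and both appear to be misprints for $N_{m,\,m-d}$ --- a simple sanity check with $r\ge m$ (so $d=m$, trivial stabiliser occurs, faithful action, annihilator zero) confirms your version and not the printed one.
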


\q Recall that, for an element $\de$ of $B$ we have the  partition algebra $P_r(\de)_B$ over $B$. One may find a detailed account of the construction and properties of $P_r(\de)_B$ in for example the papers by  Paul P. Martin, \cite{PPM1}, \cite{PPM2}, and \cite{HR}, \cite{BDM}. One also has the half partition algebra $P_{r+1/2}(\de)_B$, a sub-algebra of $P_{r+1}(\de)_B$.    Furthermore $P_r(n)$ and $P_{r+1/2}(n)$ act naturally on $V^{\otimes r}$ and these actions commute with the actions of $\Sym(n)$ and $\Sym(n-1)$. By  the result of Halverson-Ram, \cite[Theorem 3.6]{HR} the maps 

$$P_r(n)\to \End_{B\Sym(n)}(V^{\otimes r}) \eqno{(3)}$$
and

$$P_{r+1/2}(n)\to \End_{\Sym(n-1)}(V^{\otimes r})  \eqno{(4)}$$

given by the representations, are surjective.

\q Thus specialising to $m=n$ or $n-1$ in the above proposition we have the following, which is the main result of \cite{BDM}

\begin{corollary}   For any commutative ring $B$ and positive integers $n$ and  $r$,  the natural maps 
$$B\Sym(n)\to \End_{P_r(n)}(V^{\otimes r})$$
and 
$$B\Sym(n-1)\to \End_{P_{r+1/2}(n)}(V^{\otimes r})$$
are surjective.

\end{corollary}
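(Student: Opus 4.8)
The plan is to deduce this immediately from Proposition 6.4 together with the surjectivity of the Halverson--Ram maps recorded in (3) and (4). Fix a commutative ring $B$ and positive integers $n$ and $r$. Applying Proposition 6.4 with $m=n$, the $B\Sym(n)$-module $V^{\otimes r}$ has the double centraliser property, i.e. the natural map
$$B\Sym(n)\to \DEnd_{B\Sym(n)}(V^{\otimes r})$$
is surjective; here $\DEnd_{B\Sym(n)}(V^{\otimes r})$ is by definition the centraliser, inside $\End_B(V^{\otimes r})$, of the subalgebra $\Lambda:=\End_{B\Sym(n)}(V^{\otimes r})$ (since $\Lambda$ contains the scalar operators, everything in sight is automatically $B$-linear).

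The one remaining point is to identify $\End_{P_r(n)}(V^{\otimes r})$ with $\DEnd_{B\Sym(n)}(V^{\otimes r})$. The algebra $P_r(n)$ acts on $V^{\otimes r}$ only through its image in $\End_B(V^{\otimes r})$, and by (3) that image is exactly $\Lambda=\End_{B\Sym(n)}(V^{\otimes r})$; hence a $B$-linear endomorphism of $V^{\otimes r}$ centralises the $P_r(n)$-action precisely when it centralises $\Lambda$, so $\End_{P_r(n)}(V^{\otimes r})=\DEnd_{B\Sym(n)}(V^{\otimes r})$. Composing with the surjection of the first paragraph gives that $B\Sym(n)\to\End_{P_r(n)}(V^{\otimes r})$ is surjective.

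The half-integer case is identical: Proposition 6.4 with $m=n-1$ gives the double centraliser property for $V^{\otimes r}$ as a $B\Sym(n-1)$-module, (4) says $P_{r+1/2}(n)$ surjects onto $\End_{B\Sym(n-1)}(V^{\otimes r})$, so $\End_{P_{r+1/2}(n)}(V^{\otimes r})=\DEnd_{B\Sym(n-1)}(V^{\otimes r})$, and composing the two surjections finishes the proof.

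I do not expect any genuine obstacle in this corollary itself: it is a short formal consequence of the two inputs, and all the substance lies in Proposition 6.4 (which in turn rests on the tilting-module computations for the quantised Schur algebras of Part II, pushed through the Schur functor and then over $\zed$ and arbitrary $B$ via the localisation and base-change results of Part I) and in the Halverson--Ram double centraliser theorem \cite[Theorem 3.6]{HR}. The only thing worth being careful about is that $\End_{P_r(n)}(V^{\otimes r})$ depends on $P_r(n)$ only through its image in $\End_B(V^{\otimes r})$, so that the surjectivity of $P_r(n)\to\End_{B\Sym(n)}(V^{\otimes r})$ --- not any faithfulness of the $P_r(n)$-action --- is precisely what lets us swap $P_r(n)$ for its double-centraliser partner $\End_{B\Sym(n)}(V^{\otimes r})$.
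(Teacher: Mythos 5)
Your proposal is correct and is essentially the paper's own argument: the paper simply says the corollary follows by specialising Proposition 6.4 to $m=n,n-1$ and invoking the surjectivity of (3) and (4), which is exactly what you do. The point you flag at the end — that $\End_{P_r(n)}(V^{\otimes r})$ depends on $P_r(n)$ only through its image in $\End_B(V^{\otimes r})$, so surjectivity (not faithfulness) of (3) and (4) is what lets you replace $P_r(n)$ by $\End_{B\Sym(n)}(V^{\otimes r})$ — is the small bit of reasoning the paper leaves implicit, and you have spelled it out correctly.
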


\section{Further Examples}

\q We restrict to the classical case $q=1$. We fix a discrete valuation ring $R$ with field of fractions $K$ of characteristic $0$  and a  residue field $F$ of positive characteristic $p$.  Here \lq\lq base change" will mean base change from $R$ to $F$.  We shall say that base change holds for a finite $G$-set $X$, where $G$ is a finite group, if it holds on annihilators i.e. if the natural map $F\otimes_R \Ann_{RG}(RX) \to \Ann_{FG}(FX)$ is an isomorphism.

\q We give an example to show that base change and the double centraliser property may fail for a  Young permutation module  for a symmetric group.  Let $n$ be a positive integer.     For $\lambda\in \Par(n)$ we write simply $M(\lambda)$ for $M(\lambda)_F$ and  $\Sp(\lambda)$ for $\Sp(\lambda)_F$.

\q We use James's notation, \cite{James}, for the irreducible $F\Sym(n)$-modules.  A partition $\lambda=(\lambda_1,\lambda_2,\ldots)$ is called $p$-regular if  no non-zero part is repeated $p$-times. We write $\Par(n)_\reg$ for the set of $p$-regular partitions of $n$. For $\lambda\in \Par(n)_\reg$ the Specht module $\Sp(\lambda)$ has a unique irreducible quotient $D^\lambda$, and the modules $D^\lambda$, $\lambda\in \Par(n)_\reg$, form a complete set of pairwise non-isomorphic irreducible $F\Sym(n)$-modules.

\q For $\sigma\subseteq \Par(n)$ we write $C(\sigma)$ for the  cosaturation, i.e. for the set of $\mu\in \Par(n)$ such that $\mu\geq \lambda$ for some $\lambda\in\sigma$.

\q We fix a  Young $\Sym(n)$-set $X$, i.e. a finite $\Sym(n)$-set such that each point stabiliser is a Young subgroup. We write $\zeta(X)$ for the set of $\lambda\in \Par(n)$ such that $\Sym(n)$ is the stabiliser of some element of $X$. Now for $\lambda\in \Par(n)$,  the module $\Sp(\lambda)_K$ appears as a composition factor of $KX$ if and only if it appears as composition factor of $M(\lambda)_K$ for some $\lambda\in \zeta(X)$. However the composition factors of $M(\lambda)_K$ are precisely those $\Sp(\mu)_K$ with $\mu\geq \lambda$, see e.g. \cite[14.1]{James}. Hence the composition factors of $KX$ are precisely those $\Sp(\mu)_K$ with $\mu\in C(\zeta(X))$.  The  image of the  representation $\rho:KG\to \End_K(KX)$ has dimension 
$\sum_{\lambda\in C(\zeta(X)))} \dim(\lambda)^2$ and the annihilator of $KX$ has dimension 
$n! - \sum_{\lambda\in C(\zeta(X))} \dim(\lambda)^2$.      We define the $\Sym(n)$-set
$$C(X)=\coprod_{\lambda\in C(\zeta(X))} \Sym(n)/\Sym(\lambda).$$
 The above applies to $KC(X)$ so that the annihilator $KC(X)$ also has dimension $n! - \sum_{\lambda\in C(\zeta(X))} \dim(\lambda)^2$.  However, we know that the base change property on ideals holds for $C(X)$ so that $\Ann_{F\Sym(n)}(FC(X))$ also  has dimension  $n! - \sum_{\lambda\in C(\zeta(X))}\dim(\lambda)^2$. Now $\Ann_{F\Sym(n)}(FX)$ is the ideal of elements $a\in FG$ which act as zero on all $M(\lambda)$, $\lambda\in \zeta(X)$. Hence we have the following.

\begin{proposition}  Let $n$ be a non-negative integer and let $X$ be a Young $\Sym(n)$-set. Then  $\Ann_{F\Sym(n)}(FC(X))\leq \Ann_{F\Sym(n)}(FX)$ and $X$ has the base change property if and only if  $\Ann_{F\Sym(n)}(FC(X))= \Ann_{F\Sym(n)}(FX)$
\end{proposition}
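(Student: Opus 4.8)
The plan is to prove the two assertions by combining an elementary containment of annihilators of permutation modules with the dimension bookkeeping of Lemma 3.4(i). First I would observe that, decomposing $X$ into $\Sym(n)$-orbits and writing each orbit as $\Sym(n)/\Sym(\lambda)$ with $\lambda\in\zeta(X)$, one has $FX\cong\bigoplus M(\lambda)$ (sum over the orbits, so $\lambda$ runs over $\zeta(X)$ with multiplicities) and $FC(X)=\bigoplus_{\lambda\in C(\zeta(X))}M(\lambda)$. Consequently $\Ann_{F\Sym(n)}(FX)=\bigcap_{\lambda\in\zeta(X)}\Ann_{F\Sym(n)}(M(\lambda))$ and $\Ann_{F\Sym(n)}(FC(X))=\bigcap_{\lambda\in C(\zeta(X))}\Ann_{F\Sym(n)}(M(\lambda))$. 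Since $\zeta(X)\subseteq C(\zeta(X))$, the intersection taken over the larger index set lies inside the one taken over the smaller, which gives at once the inclusion $\Ann_{F\Sym(n)}(FC(X))\leq\Ann_{F\Sym(n)}(FX)$.

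Next I would identify the two relevant dimensions. Put $c=\sum_{\lambda\in C(\zeta(X))}\dim(\lambda)^2$. Over $K$: since $K\Sym(n)$ is semisimple and, as recalled just before the statement, the composition factors of $KX$ are precisely the $\Sp(\mu)_K$ with $\mu\in C(\zeta(X))$, one has $\dim_K\Ann_{K\Sym(n)}(KX)=n!-c$. Over $F$: the set $C(\zeta(X))$ is co-saturated (the cosaturation operation $C$ is a closure operator, so $C(C(\sigma))=C(\sigma)$), and a co-saturated set equals its own coarsening because coarsenings are $\geq$ in the dominance order; hence the Young permutation module $FC(X)$ satisfies the hypothesis of Proposition 5.9 in the classical case $q=1$, and Proposition 5.9(i) yields $\dim_F\Ann_{F\Sym(n)}(FC(X))=n!-c$ as well. (Alternatively this is the statement, already noted in the discussion above, that base change on annihilators holds for $C(X)$.) Thus $\dim_F\Ann_{F\Sym(n)}(FC(X))=\dim_K\Ann_{K\Sym(n)}(KX)$.

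Finally I would invoke Lemma 3.4(i) applied to the discrete valuation ring $R$, the algebra $R\Sym(n)$ and the module $RX$ (with residue field $F$, field of fractions $K$): it gives $\dim_F\Ann_{F\Sym(n)}(FX)\geq\dim_K\Ann_{K\Sym(n)}(KX)=n!-c$, with equality if and only if the natural map $F\otimes_R\Ann_{R\Sym(n)}(RX)\to\Ann_{F\Sym(n)}(FX)$ is an isomorphism, i.e. if and only if $X$ has the base change property. Since $\Ann_{F\Sym(n)}(FC(X))\leq\Ann_{F\Sym(n)}(FX)$ and $\dim_F\Ann_{F\Sym(n)}(FC(X))=n!-c$, the two ideals coincide exactly when $\dim_F\Ann_{F\Sym(n)}(FX)=n!-c$, i.e. exactly when $X$ has the base change property; this gives the stated equivalence. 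I do not expect a genuine obstacle here: the one point that needs care is the computation of $\dim_F\Ann_{F\Sym(n)}(FC(X))$, which hinges on $C(\zeta(X))$ being co-saturated so that Proposition 5.9 (equivalently, the base change statement for $C(X)$) applies.
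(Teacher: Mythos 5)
Your proposal is correct and follows essentially the same route as the paper: compute $\dim_K\Ann_{K\Sym(n)}(KX)=n!-\sum_{\lambda\in C(\zeta(X))}\dim(\lambda)^2$ from the composition factors of $KX$, use the co-saturation of $C(\zeta(X))$ (via Proposition 5.9, equivalently base change for $C(X)$) to get the same value for $\dim_F\Ann_{F\Sym(n)}(FC(X))$, note the containment of annihilators coming from $\zeta(X)\subseteq C(\zeta(X))$, and conclude via the dimension comparison of Lemma 3.4(i). The only cosmetic difference is that you invoke Proposition 5.9(i) directly for the $F$-dimension where the paper quotes the base change property for $C(X)$; these are the same ingredient.
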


\begin{remark} Let $\Lambda$ be a finite dimensional algebra over a  field $L$.  Let $M$ be a finite dimensional $\Lambda$-module. We will say that a $\Lambda$-module $N$ is a generalised section of $M$ if $N$ is a section of a finite direct sum of copies of $M$. Let $I$ be the annihilator of $M$. Then clearly $I$ annihilates any generalised section of $M$. Suppose  $N$ is a finite dimensional $\Lambda$-module annihilated by $I$. The representation $\rho:\Lambda\to \End_L(M)$ gives rise to an embedding of $\Lambda/I$ into a finite direct sum of copies of $M$.  Moreover,  there is an epimorphism from a finite direct sum of copies of $\Lambda/I$ to $N$ and so $N$ is a generalised section of $M$. Hence $N$ is annihilated by $I$ if and only if it is a generalised section of $M$.

\q Let $J$ be the Jacobson radical of $\Lambda$. If $J^kM=0$, for some positive integer $k$, then we have $J^kN=0$, for any generalised section of $N$ of $M$. Hence if $I$ annihilates $N$ then the Loewy length of $N$ is less than or equal to the Loewy length of $M$.
\end{remark}

\begin{proposition} Let $X$ be a Young permutation module for the symmetric group $\Sym(n)$. Then $X$ has the base change property on ideals  if and only if, for all $\mu\in C(\zeta(X))$,  the Young module $Y(\mu)$ is a generalised section of $FX$. 
\end{proposition}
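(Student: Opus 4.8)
The plan is to combine three ingredients already available: Proposition 7.1, the identification of $FC(X)$ as a direct sum of permutation modules, and the description in Remark 7.2 of which modules are annihilated by $\Ann_{F\Sym(n)}(FX)$. By Proposition 7.1 one always has $\Ann_{F\Sym(n)}(FC(X))\leq\Ann_{F\Sym(n)}(FX)$, and $X$ has the base change property on ideals precisely when these ideals are equal, equivalently (given the automatic inclusion) precisely when $\Ann_{F\Sym(n)}(FX)$ annihilates $FC(X)$. By the definition of $C(X)$ we have $FC(X)=\bigoplus_{\lambda\in C(\zeta(X))}M(\lambda)$, so this holds if and only if $M(\lambda)$ is annihilated by $\Ann_{F\Sym(n)}(FX)$ for every $\lambda\in C(\zeta(X))$; and by Remark 7.2, applied with $\Lambda=F\Sym(n)$, $M=FX$ and $I=\Ann_{F\Sym(n)}(FX)$, this is in turn equivalent to $M(\lambda)$ being a generalised section of $FX$ for every $\lambda\in C(\zeta(X))$.

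So the remaining content is to show that this last condition is equivalent to the stated one, that $Y(\mu)$ is a generalised section of $FX$ for all $\mu\in C(\zeta(X))$. First I would record the routine fact that the class of generalised sections of the fixed module $FX$ is closed under passing to direct summands (a direct summand is a quotient, and a quotient of a subquotient of $FX^{(r)}$ is again a subquotient of $FX^{(r)}$) and under finite direct sums (a direct sum of subquotients of the $FX^{(r_i)}$ is a subquotient of $FX^{(\sum_i r_i)}$). Given this, if every $M(\lambda)$ with $\lambda\in C(\zeta(X))$ is a generalised section of $FX$, then so is its direct summand $Y(\mu)$ for $\mu=\lambda$, since $Y(\mu)$ occurs in $M(\mu)$ with multiplicity one; hence $Y(\mu)$ is a generalised section of $FX$ for all $\mu\in C(\zeta(X))$. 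Conversely, assume every such $Y(\mu)$ is a generalised section of $FX$ and fix $\lambda\in C(\zeta(X))$. Using the triangular decomposition of $M(\lambda)$ recalled in Section~5 (only the $Y(\mu)$ with $\mu\geq\lambda$ occur), each $\mu$ appearing in $M(\lambda)$ lies in $C(\zeta(X))$ because $C(\zeta(X))$ is upward closed in the dominance order; so $M(\lambda)$ is a finite direct sum of generalised sections of $FX$, hence a generalised section of $FX$. This closes the loop.

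I do not anticipate a genuine obstacle here: all the steps are short, and the only mildly delicate point is the closure of the notion of generalised section under direct summands and finite direct sums, which is immediate from the definition. The real substance is the bookkeeping that lets one replace the permutation modules $M(\lambda)$ by the Young modules $Y(\mu)$, and this works exactly because the multiplicity matrix $(a_{\lambda\mu})$ is triangular with unit diagonal and because $C(\zeta(X))$ is cosaturated — both features having been set up earlier with this kind of reduction in mind.
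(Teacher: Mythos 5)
Your proof is correct and follows essentially the same route as the paper: reduce via Proposition 7.1 to comparing $\Ann_{F\Sym(n)}(FX)$ with $\Ann_{F\Sym(n)}(FC(X))$, translate "annihilated by $\Ann_{F\Sym(n)}(FX)$" into "generalised section of $FX$" via Remark 7.2, and pass between $M(\lambda)$ and $Y(\mu)$ using the triangular Young-module decomposition and the upward-closure of $C(\zeta(X))$. The only cosmetic difference is that you make the intermediate equivalence (base change iff each $M(\lambda)$ with $\lambda\in C(\zeta(X))$ is a generalised section) explicit before bridging to the $Y(\mu)$, while the paper folds these steps together.
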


\begin{proof} We form  $C(X) = \coprod_{\lambda\in C(\zeta(X))} \Sym(n)/\Sym(\lambda)$ as above.  Then, by Proposition 7.1,  $X$ has the base change property if and only if \\
$\Ann_{F\Sym(n)}(FX)=\Ann_{F\Sym(n)}(FC(X))$.  

\q Suppose $\mu\in C(\zeta(X))$. The Young module $Y(\mu)$ is a direct summand of $M(\mu)$ and hence annihilated by   
 $\Ann_{F\Sym(n)}(FC(X))$ and hence annihilated by 
 $\Ann_{F\Sym(n)}(FX)$. Hence $Y(\mu)$ is a generalised section of $FX$.

\q Now suppose that each $Y(\mu)$, with $\mu\in C(\zeta(X))$ is a generalised section of $FX$. Then $\Ann_{F\Sym(n)}(FX)$ annihilates every such $Y(\mu)$. However, $FC(X)$ is a direct sum of Young modules $Y(\mu)$ with $\mu\in C(\zeta(X))$.  Hence $\Ann_{F\Sym(n)}(FX)$ annihilates $FC(X)$, i.e. \\
$\Ann_{F\Sym(n)}(FX) \leq \Ann_{F\Sym(n)}(FC(X))$. The converse is certainly true so $\Ann_{F\Sym(n)}(FX)=\Ann_{F\Sym(n)}(FC(X))$.
Hence the dimension of \\
$\Ann_{F\Sym(n)}(FX)$ is the dimension of 
$\Ann_{F\Sym(n)}(FC(X))$ and, since $C(X)$ has the base change property, this is the $K$-dimension of \\
$\Ann_{K\Sym(n)}(KC(X))$, which is also the $K$-dimension of  
$\Ann_{F\Sym(n)}(KX)$ so we are done.
\end{proof}

\begin{corollary} Let $X$ be a Young $\Sym(n)$-set. 

(i) If $Y$ is also a Young $\Sym(n)$ set such that $\zeta(X)=\zeta(Y)$ then $X$ has base change if and only if $Y$ has base change. 

(ii) If base change fails for $X$ then there exists a minimal element $\lambda$ of $\zeta(X)$ such that base change fails for $\Sym(n)/\Sym(\lambda)$.

\end{corollary}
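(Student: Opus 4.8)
The plan is to reduce both parts to Proposition 7.3 together with the annihilator-theoretic reformulation of generalised sections in Remark 7.2, which says that a finite-dimensional $F\Sym(n)$-module $N$ is a generalised section of a module $M$ precisely when $\Ann_{F\Sym(n)}(M)$ annihilates $N$.

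\emph{Part (i).} For any Young $\Sym(n)$-set $X$ the permutation module $FX$ is a direct sum $\bigoplus_{\lambda\in\zeta(X)}M(\lambda)^{(c_\lambda)}$ in which every multiplicity $c_\lambda$ is at least $1$; since the multiplicities are positive this gives $\Ann_{F\Sym(n)}(FX)=\bigcap_{\lambda\in\zeta(X)}\Ann_{F\Sym(n)}(M(\lambda))$, which depends on $X$ only through the set $\zeta(X)$. The cosaturation $C(\zeta(X))$ also depends only on $\zeta(X)$. Hence, by Proposition 7.3, the condition for $X$ to have the base change property — that $Y(\mu)$ be a generalised section of $FX$, equivalently (Remark 7.2) that $Y(\mu)$ be annihilated by $\Ann_{F\Sym(n)}(FX)$, for every $\mu\in C(\zeta(X))$ — depends only on $\zeta(X)$. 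Therefore $\zeta(X)=\zeta(Y)$ forces $X$ and $Y$ to have base change simultaneously. (Equivalently one may observe that both $\dim_F\Ann_{F\Sym(n)}(FX)$ and $\dim_K\Ann_{K\Sym(n)}(KX)$ depend only on $\zeta(X)$, and apply Lemma 3.4(i).)

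\emph{Part (ii).} I prove the contrapositive: if $\Sym(n)/\Sym(\lambda)$ has the base change property for every minimal element $\lambda$ of $\zeta(X)$, then so does $X$. For the transitive set $\Sym(n)/\Sym(\lambda)$ we have $\zeta=\{\lambda\}$ and $C(\{\lambda\})=\{\mu\in\Par(n):\mu\geq\lambda\}$, so Proposition 7.3 turns the hypothesis into the statement that $Y(\mu)$ is a generalised section of $M(\lambda)$ whenever $\lambda$ is a minimal element of $\zeta(X)$ and $\mu\geq\lambda$. Now fix $\mu\in C(\zeta(X))$. Since $\Par(n)$ is finite, $C(\zeta(X))$ is the set of partitions dominating some minimal element of $\zeta(X)$, so we may choose a minimal $\lambda\in\zeta(X)$ with $\lambda\leq\mu$; then $Y(\mu)$ is a generalised section of $M(\lambda)$. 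But $M(\lambda)$ is a direct summand of $FX$ (as $\lambda\in\zeta(X)$), so $\Ann_{F\Sym(n)}(FX)\subseteq\Ann_{F\Sym(n)}(M(\lambda))$, and hence $Y(\mu)$ is also annihilated by $\Ann_{F\Sym(n)}(FX)$, i.e.\ is a generalised section of $FX$. As this holds for all $\mu\in C(\zeta(X))$, Proposition 7.3 gives that $X$ has the base change property, completing the contrapositive.

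The one point carrying actual content is the last transition in part (ii): from ``$Y(\mu)$ is a generalised section of the single summand $M(\lambda)$'' to ``$Y(\mu)$ is a generalised section of $FX$''. This is exactly where Remark 7.2 is used, converting the generalised-section condition into the annihilator inclusion $\Ann_{F\Sym(n)}(FX)\subseteq\Ann_{F\Sym(n)}(M(\lambda))$, which is monotone when the module is enlarged to one having it as a direct summand. Everything else — the decomposition of $FX$, the dependence of $\Ann_{F\Sym(n)}(FX)$ and $C(\zeta(X))$ on $\zeta(X)$ alone, and the existence inside the finite poset $\Par(n)$ of a minimal element of $\zeta(X)$ below any given $\mu\in C(\zeta(X))$ — is routine.
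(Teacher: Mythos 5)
Your proof is correct, and it takes what is evidently the intended route: the paper states the corollary without proof immediately after Proposition 7.3, expecting exactly this kind of argument combining Proposition 7.3 with Remark 7.2. Part (i) is handled properly by observing that both $\Ann_{F\Sym(n)}(FX)=\bigcap_{\lambda\in\zeta(X)}\Ann_{F\Sym(n)}(M(\lambda))$ and the cosaturation $C(\zeta(X))$ depend on $X$ only through $\zeta(X)$. In part (ii), the two key steps — (a) the finiteness of $\Par(n)$ lets you descend from an arbitrary $\lambda_0\in\zeta(X)$ with $\lambda_0\leq\mu$ to a minimal element of $\zeta(X)$ still below $\mu$, and (b) the monotonicity $\Ann_{F\Sym(n)}(FX)\subseteq\Ann_{F\Sym(n)}(M(\lambda))$ when $M(\lambda)$ is a summand of $FX$ passes the annihilation, via Remark 7.2, from $M(\lambda)$ to $FX$ — are both stated and justified cleanly. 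No gaps.
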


\q We now put these properties to  use in the example that we now give.  Since we have shown that base change and the double centraliser property holds for permutation module $M(\lambda)$ with $\lambda$ a hook the first case to look at here is $\lambda=(2,2)$. 

\q If the characteristic of $F$ is at least $5$ then the semisimplicity of of $M(\lambda)$ guarantees that the desired properties hold.

\q  Suppose now that the characteristic is $3$. Then $M(2,2)$ has image $M(4)$ so that $M(2,2)$ had the desired property if and only if $M(2,2) \oplus M(4)$ has. Moreover $M(2,2)$ has a filtration $0=V_0<V_1<V_2<V_3=M(2,2)$ with 
$$M(2,2)/V_1=\Sp(3,1)\oplus \Sp(4)=M(3,1)$$
(e.g. by block considerations)   so that $M(3,1)$ is an of $M(2,2)$  image and $M(2,2)$ has the desired properties since $M(2,2)\oplus M(3,1)\oplus M(4)$ has, by \cite[(1.4)]{LieModules}.

\q Now suppose  $F$ has characteristic $2$.  We shall use the Schur algebra  $S(4,4)$ and the Schur functor $f:\mod(S(4,4))\to \mod(F\Sym(4))$, as in \cite[Chapter 6]{EGS}.   Now the injective module  $I(2,2)$ has a filtration with sections $\nabla(2,2), \nabla(3,1),\nabla(4)$.  The module $I(2,2)$ is a direct summand  of the module $S^2E\otimes S^2E$ and this also has a filtration with sections $\nabla(2,2), \nabla(3,1),\nabla(4)$.  Hence we have $I(2,2)=S^2E\otimes S^2E$ and, applying the Schur functor, the module $M(2,2)=Y(2,2)$ has a filtration   $0=V_0<V_1<V_2<V_3=M(2,2)$ with $V_1=\Sp(2,2)$, $V_2/V_1=\Sp(3,1)$ and $V_3/V_2=\Sp(4)$.  Now $F\Sym(4)$ has simple modules $D^4=\Sp(4)$ and $D^{3,1}=\Sp(2,2)$.  Moreover the module $\Sp(3,1)$ has a submodule $U$ with $\Sp(3,1)/U=D^{3,1}$ and $U=D^4$.  The simple modules are self dual. The module $M(2,2)$ is self dual so that $D^{3,1}$ appears in the head as well as the socle. Moreover, since $M(2,2)$ is indecomposable the radical of $M(2,2)$ contains a copy of $D^{3,1}$. Similarly since $D^4$ appears in the head it also appears in the socle and indeed in the radical of $M(2,2)$. If follows that $M(2,2)$ has a submodule $Z$ with both $Z$ and $M(2,2)/Z$ isomorphic to  $D^{3,1}\oplus D^4$.  Hence $M(2,2)$ has Loewy length $2$. 

\q Now consider the module $M(3,1)$. The module $I(3,1)$ has sections $\nabla(3,1)$, $\nabla(4)$ as does the injective module $S^3E\otimes E$. Hence we have $I(3,1)=S^3E\otimes E$ and applying the Schur functor we have that $M(3,1)=Y(3,1)$ has a filtration with sections $\Sp(3,1)$, $\Sp(4)$. Since $M(3,1)$ is indecomposable it must be uniserial with sections $D^4,D^{3,1},D^4$. This module has Loewy length $3$ and so, by  Remark 7.2,  $M(3,1)$ can not be a generalised section of $M(2,2)$. Hence the annihilator of $M(2,2)$ does not annihilate $Y(3,1)$.  Hence, by Proposition 7.3, $\Sym(4)/\Sym(2,2)$ does not have the base change property.

\q In summary,  have shown that, taking $R$ a discrete valuation ring with residue field $F$ of characteristic $2$, then the  Young $\Sym(4)$-set \\
$\Sym(4)/\Sym(2,2)$ does not have the base change property, from $R$ to $F$,  on annihilators. By Lemma 3.5, the Young permutation module $M(2,2)$ does not have the double centraliser property.


\section{Annihilators and cell ideals}

\q We make some remarks on the annihilator ideals of Young permutation modules  in relation to  cell structure, in the spirit of \cite{Haerterich}, \cite{DotNym}, \cite{XX}.   

\q First consider a  semisimple algebra $B$,  finite dimensional over a field $F$. Let $S(\phi)$, $\phi\in \Phi$, be a complete set of pairwise non-isomorphic irreducible left $B$-modules.
For a subset $\tau$ of $\Phi$ we write $I(\tau)$ for the sum of all irreducible submodules,  of the left regular module $B$,  isomorphic  to $S(\phi)$ for some $\phi\in \Phi$. Then $I(\tau)$ is an ideal of $B$. We  have $B=I(\tau)\oplus I(\sigma)$, where $\sigma$ is the complement of $\tau$ in $\Phi$ and moreover: 


\sl

\phantom{hello} \hskip 30pt       $I(\tau)=\Ann_B(V)$ for any left  $B$-module $V$ of the form \\
\phantom{hello}   \hskip 7pt $\bigoplus_{\phi\in \Phi} S(\phi)^{(d_\phi)}$
  with $d_\phi\neq 0$ if and only if $\phi\in \sigma$. \rm   \hskip 80pt (1)
 
 \medskip

\q  We  recall the notion  of a cellular algebra due to Graham and Lehrer, \cite{GL}.

 \begin{definition}  Let $A$ be an algebra over a commutative ring $R$. A cell datum for   $(\Lambda,N,C,\,{}^*\,)$  for $A$ consists of the following.

\medskip

(C1) A partially ordered set $\Lambda$ and for each $\lambda\in \Lambda$ a finite set $N(\lambda)$ and an injective map $C: \coprod_{\lambda\in \Lambda} N(\lambda) \times N(\lambda) \to A$ with image an $R$-basis of $A$. 

(C2) For $\lambda\in \Lambda$ and $u,v\in N(\lambda)$ we write $C(u,v)=C^\lambda_{u,v}\in R$. Then   $^*$ is an $R$-linear involutory anti-automorphism  of $A$ such that $(C^\lambda_{u,v})^*= C^\lambda_{v,u}$.

(C3) If $\lambda\in \Lambda$ and $u,v\in N(\lambda)$ then for any element $a\in A$ we have 

$$aC^\lambda_{u,v}\equiv \sum_{u'\in N(\lambda)} r_a(u',u) C^\lambda_{u',v}   \hskip 20pt ({\rm mod } \  A(<\lambda))$$
where $r_a(u',u)\in R$ is independent of $v$ and where $A(<\lambda)$ is the $R$-submodule of $A$ generated by $\{ C^\mu_{u'',v''} \vert \mu\in  \Lambda, \mu<\lambda  \hbox{ and } u'',v''\in N(\mu)\}$.

\end{definition}

\q   We shall assume throughout that $\Lambda$ is finite. As in \cite{GL}, for $\lambda\in \Lambda$, we write  $W(\lambda)$ for the corresponding cell module. For $\tau \subseteq  \Lambda$ we write $A(\tau)$ for the $R$- span of all $C_{u,v}^\lambda$ with $\lambda\in \tau$ and $u,v\in N(\lambda)$.  Then $A/A(\tau)$ is a $R$-free on $\{C^\lambda_{u,v}+A(\tau) \vert \lambda\in \Lambda \backslash \tau,  u,v\in N(\lambda)\}$.   We shall write $W_R(\lambda)$ for $W(\lambda)$ and $A_R(\tau)$ for $A(\tau)$  if  we wish to emphasise the role of the base ring $R$.

\q Given a homomorphism of commutative rings $R\to \barR$ we obtain, by base change, a cell structure on $A_{\barR}=\barR\otimes_R A$, which  will also be denoted $(\Lambda,N,C,{}^*)$, see \cite[(1.8)]{GL}.  For $\tau\subseteq \Lambda$ and $\lambda\in \Lambda$, we identify $A(\tau)_\barR=\barR\otimes_R A(\tau)$ with $A_\barR(\tau)$ with  $W(\lambda)_\barR=\barR\otimes_R W(\lambda)$ with $W_\barR(\lambda)$ in the natural way.

\q Suppose for the moment that $R$ is a domain with field of fractions $F$.  For an $R$-submodule $X$ we write $X_F$ for $F\otimes_R X$ and if $X$ is a submodule of $A$ identify $X_F$ with a submodule of $A_F$.  Then we have
$$A(\tau)=A\bigcap A(\tau)_F \eqno{(2)}.$$

\q To  see this write  $X=A \bigcap A(\tau)_F$, the set of all $a\in A$ such that $ra\in A(\tau)$ for some $0\neq r\in R$. Thus $X/A(\tau)$ is $R$-torsion and since $A/A(\tau)$ is torsion free we have $X/A(\tau)=0$, i.e. $X=A(\tau)$.

\q We assume again  that $R$ is an arbitrary commutative ring. If   $\tau$ is a saturated subset of $\Lambda$ then $A(\tau)$ is an ideal of $A$,  called the cell ideal corresponding to $\tau$.

\q Suppose now that $R=F$ is a field and that $A$ is semisimple.  The cell modules $W(\lambda)$, $\lambda\in \Lambda$, form a complete set of pairwise non-isomorphic irreducible $A$-modules, see \cite[(3.8) Theorem]{GL}.  If $\tau$ is a saturated subset of $\Lambda$ with complement $\sigma$ then $A(\tau)$, as a left $A$-module has a filtration with quotients the cell modules  $W(\lambda)$, $\lambda\in \tau$,  and the quotient $A/A(\tau)$  has a filtration with sections the cell modules $A(\lambda)$, $\lambda\in \sigma$. Thus  from (1)  we have:

\sl

\phantom{hello} \hskip 20pt   $A(\tau)=I(\tau)$ and $I(\tau)=\Ann_A(V)$, where $V$ is any left module of the \\
\phantom{hello}   form    $\bigoplus_{\lambda\in \Lambda} W(\lambda)^{(d_\lambda)}$   with $d_\lambda\neq 0$ if and only if $\lambda\in \sigma$. \rm \hskip 50pt (3)
\medskip

\begin{lemma}   Let $R$ be a domain with field of fractions $F$  and $A$ an $R$-algebra with cell datum $(\Lambda,N,C,{}^*)$.  Suppose that $A_F$ is semisimple. Let $X$ be a left $A$-module, finitely generated and torsion free over $R$. Let $\sigma$ be a cosaturated subset of $\Lambda$.  If $X_F=\oplus_{\lambda\in \sigma} W_F(\lambda)^{(d_\lambda)}$, for positive integers $d_\lambda$,  then $\Ann_A(X)$ is the cell ideal $A(\tau)$, where $\tau$ is the complement of $\sigma$ in $\Lambda$.
\end{lemma}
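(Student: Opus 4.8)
The plan is to prove the two inclusions $A(\tau)\subseteq \Ann_A(X)$ and $\Ann_A(X)\subseteq A(\tau)$ separately, in each case passing through the semisimple algebra $A_F$. Note first that since $\sigma$ is cosaturated its complement $\tau$ is saturated, so $A(\tau)$ is genuinely a cell ideal of $A$ and the statement makes sense. Since $A$ has an $R$-basis by (C1) it is free, hence torsion free, over $R$, and $X$ is torsion free over $R$ by hypothesis; therefore the canonical maps $A\to A_F$ and $X\to X_F$ are injective, and under these identifications $A(\tau)$ is the intersection $A\cap A_F(\tau)$ by the identity (2).

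For $A(\tau)\subseteq\Ann_A(X)$: over the field $F$ the algebra $A_F$ is semisimple, and by hypothesis $X_F=\bigoplus_{\lambda\in\sigma}W_F(\lambda)^{(d_\lambda)}$ with $d_\lambda\neq 0$ precisely for $\lambda\in\sigma$. Hence the characterisation (3) applies and gives $A_F(\tau)=I(\tau)=\Ann_{A_F}(X_F)$. Now $A(\tau)\subseteq A_F(\tau)$ annihilates $X_F$, and since $X\subseteq X_F$ it follows that $A(\tau)$ annihilates $X$, i.e. $A(\tau)\subseteq\Ann_A(X)$.

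For the reverse inclusion take $a\in\Ann_A(X)$. Then $1\otimes a$ acts as zero on $X_F=F\otimes_R X$, since $X_F$ is spanned over $F$ by the image of $X$, so $a\in\Ann_{A_F}(X_F)=A_F(\tau)$ by (3) again. Therefore $a\in A\cap A_F(\tau)=A(\tau)$ by the identity (2). This gives $\Ann_A(X)\subseteq A(\tau)$, and combining the two inclusions yields $\Ann_A(X)=A(\tau)$.

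The argument is essentially bookkeeping once the two extraction principles — the semisimple description (3) over a field and the torsion-free identity (2) over a domain — are in place, so I do not expect a serious obstacle. The only points requiring a little care are the injectivity of $X\to X_F$ (this is precisely where torsion-freeness of $X$ over $R$ enters) and the compatibility of the cell-ideal construction with the localisation $R\to F$, namely the identification $A(\tau)_F=A_F(\tau)$ together with (2); both are supplied by the material preceding the lemma.
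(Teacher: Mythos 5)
Your proof is correct and follows essentially the same route as the paper's: identify $\Ann_{A_F}(X_F)$ with $A_F(\tau)$ via the semisimple description (3), then pull back to $A$ using the torsion-free identity (2). The paper simply collapses your two inclusions into the single chain $\Ann_A(X)=A\cap\Ann_{A_F}(X_F)=A\cap A_F(\tau)=A(\tau)$, but the ingredients and their roles are identical.
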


\begin{proof}   For the ideal $I(\tau)$ of $A_F$, as defined above we have $\Ann_{A_F}(X_F)=I(\tau)$, by (1), and hence 
$$\Ann_A(X)=A\bigcap \Ann_{A_F}(X_F)=A\bigcap A_F(\tau)=A(\tau)$$
by (2).
\end{proof}

\q We are interested in taking for $A$ the Hecke algebra $\Hec(n)_{R,q}$, where $R$ is a commutative ring and $q$ a unit in $R$. Given a homomorphism of commutative rings $\theta:R\to {\bar R}$ we identify ${\bar R}\otimes_R \Hec(n)_{R,q}$ with $\Hec(n)_{{\bar R},\barq}$, where $\barq=\theta(q)$.  In particular we may take for the coefficient ring  $J=\zed[x,x^{-1}]$,  the ring of  Laurent polynomials with integer coefficients in the indeterminate $x$,  which has quotient field $K=\que(x)$, the field of rational functions.  Then $\Hec(n)_{K,x}$ is semisimple and  the Specht modules $\Sp(\lambda)_{K,x}$, $\lambda\in \Par(n)$, form a complete set of pairwise non-isomorphic irreducible modules. Thus we may take $\Lambda=\Par(n)$ in a cell structure.

\medskip

\q We shall say that a cell structure  $(\Lambda,N,C,{}^*)$  on $\Hec(n)_{J,x}$ is \emph{regular} if:

(i) $\Lambda=\Par(n)$, with partial order $\trianglelefteq$,  the dominance partial order; and

(ii) for $\lambda\in \Par(n)$ the corresponding cell module $W(\lambda)$ for the cell structure on $\Hec(n)_{J,x}$ is isomorphic to $\Sp(\lambda)_{J,x}$.

\q We shall say that a cell structure on $\Hec(n)_{R,q}$ (for a commutative ring $R$ and unit $q$ of $R$) is regular if it is obtained by base change from a regular cell structure on $\Hec(n)_{J,x}$.

\medskip

\q One has available two well known regular cell structures:  one coming from  the Kazhdan-Lusztig basis on  $\Hec(n)_{R,q}$, as in \cite{GL},   and the other from the  Murphy basis,  as in \cite{Murphy}.

\q We note that  some adjustments to the results that we will refer to from \cite{Murphy},  \cite{DotNym}, \cite{XX}  are  necessary because the statements  there are expressed in terms of right modules, and we are working primarily with left modules.  We leave it to the reader to make the appropriate modifications.  More importantly  we note that  in the definition of cell datum used in   \cite{DotNym}, \cite{XX} the ordering on $\Lambda$ given in condition \cite[(1.1) Definition, (C3)]{GL} has been reversed.     Thus we say that  a quadruple $(\Lambda,\tildeN,\tildeC,{}^*)$  is a datum of the second kind  for an algebra $A$ over a commutative ring $R$ if it satisfies conditions  $(C1),(C2)$  together with the following.

\medskip

$(\tildeC 3)$   If $\lambda\in \Lambda$ and $u,v\in  \tildeN(\lambda)$ then for any element $a\in A$ we have 

$$a \tildeC^\lambda_{u,v}\equiv \sum_{u'\in \tildeN(\lambda)} r_a(u',u) \tildeC^\lambda_{u',v}   \hskip 20pt ({\rm mod } \  A(>\lambda))$$
where $r_a(u',u)\in R$ is independent of $v$ and where $\tildeA(>\lambda)$ is the $R$-submodule of $A$ generated by $\{ \tildeC^\mu_{u'',v''} \vert \mu\in  \Lambda, \mu>\lambda  \hbox{ and } u'',v''\in \tildeN(\mu)\}$.

\medskip

\q Given a cell structure of the second kind on an algebra $A$ over a commutative ring $R$ and a subset $\sigma$ of $\Lambda$ we write $\tildeA(\sigma)$ for the $R$-span of all $\tildeC^\lambda_{u,v}$ with $\lambda\in \sigma$, $u,v\in \tildeN(\lambda)$.   If $\sigma$ is co-saturated then $\tildeA(\sigma)$ is an ideal, called a cell ideal. For $\lambda\in \Lambda$ and $v\in \tildeN(\lambda)$ the submodule of $\tildeA(\geq\lambda)/\tildeA(>\lambda)$ spanned by all $\tildeC^\lambda_{u,v}$, $u\in \tildeN(\lambda)$ has structure independent of $v$. (Here $\tildeA(\geq\lambda)=\tildeA(>\lambda)$, where $\rho=\{\mu\in \Par(n) \vert \mu\geq \lambda \}$.) Such a module is denoted  $\tildeW(\lambda)$ and called the cell module corresponding to $\lambda$.

\q We prefer slightly the outcome of the discussion below  when expressed in the    original formulation, \cite{GL}.  So we need to make some minor adjustments in passing between these kinds of cell structure. 

\q Let $R$ be a commutative ring and $q$ a unit. We shall consider cell structure on the Hecke algebra $\Hec(n)_{R,q}$.  We have three $R$-linear  involutions ${}^*$, ${}^\dagger$ and ${}^\sharp$ of the $R$-algebra $\Hec(n)_{R,q}$. The first two ${}^*$, ${}^\dagger$ are anti-automorphisms and the third ${}^\sharp$ is an automorphism.  They are given on generators by $T_i^*=T_i$ and $T_i^\dagger=T_i^\sharp= -T_i +(q-1)1$,  $1\leq i<n$.  These involutions commute and the composite  of any two is the third.

\q   Let $V$ be a  $\Hec(n)_{R,q}$-module,  finitely generated and free over $R$. We  denote by $V^*$ its dual, i.e.  the $R$-module $\Hom_R(V,R)$ 
regarded as a $\Hec(n)_{R,q}$-module via the action given by $a\alpha (v)=\alpha(a^*v)$, for $a\in \Hec(n)_{R,q}$, $\alpha\in V^*$, $v\in V$. We denote by  $V^\sharp$  the $R$-module $V$ regarded as a $\Hec(n)_{R,q}$-module via the action $a\cdot v=a^\sharp v$, for $a\in \Hec(n)_{R,q}$, $v\in V$.  For a subset $X$ of $\Hec(n)_{R,q}$ we set 
$X^\sharp=\{x^\sharp \vert x\in X\}$.  Note that $\Ann_{\Hec(n)_{R,q}}(V^\sharp)=\Ann_{\Hec(n)_{R,q}}(V)^\sharp$.

\q We briefly recall the cell structure  (of the second kind) for   $\Hec(n)_{R,q}$  coming from Murphy's paper, \cite{Murphy}.  For $\lambda\in \Par(n)$, we  write $\tildeN(\lambda)$ for the set of standard $\lambda$-tableaux.    For a composition $\alpha$ of $n$ we set  $x(\alpha)=\sum_{w\in \Sym(\alpha)} T_w$.  For $\lambda\in \Par(n)$ we  write $t^\lambda$ for the standard  $\lambda$ tableau with entries $1,2,\ldots,\lambda_1$ along the first row, entries $\lambda_1+1,\ldots,\lambda_1+\lambda_2$  along  the second row, etc.  For a row standard $\lambda$-tableau $s$ write $d(s)$ for the element $s\in \Sym(n)$ such that $s=d(s)\circ t^\lambda$.   For $s,t$ row standard $\lambda$ tableaux we have the element
$$x_{st}=T^*_{d(a)}x(\lambda) T_{d(b)}$$
of $\Hec(n)_{R,q}$. 
 
\q   Murphy shows that  $\bigcup_{\lambda\in \Par(n)} \{x_{uv} \vert u,v\in \tildeN(\lambda)\}$ is an $R$-basis  of $\Hec(n)_{R,q}$.      We have the map $\tildeC  :\coprod_{\lambda\in \Par(n)} \tildeN(\lambda) \times \tildeN(\lambda) \to \Hec(n)_{R,q}$, given by $\tildeC(s,t)=x_{st}$, for $\lambda\in \Par(n)$, $s,t\in \tildeN(\lambda)$.  Then $(\Par(n),\tildeN, \tildeC,{}^*)$ is a cell datum  (of the second kind) for  $\Hec(n)_{R,q}$ (where $\Par(n)$ is regarded as a poset via the dominance partial order).  The cell module $\tildeW(\lambda)=\tildeS^\lambda$ is isomorphic to the dual Specht module, i.e.  
 $\tildeW(\lambda)^*=(\tildeS^\lambda)^*$ is 
 the Specht module $\Sp(\lambda)_{R,q}$, by  \cite[Theorem 5.2 and Theorem 5.3]{Murphy}.   (In Murphy's paper the base ring is a domain, but the arguments are valid over any commutative ring  - alternatively one could appeal to Murphy's results directly for $R=J$, $q=x$ and apply base change.)

 \q We now consider the cell structure obtained by applying the automorphism ${}^\sharp$.     For $\lambda\in \Par(n)$ we define $N(\lambda)=\tildeN(\lambda')$.  For $\lambda\in \Par(n)$, $s,t\in N(\lambda)$ we define $C^\lambda_{st}=x_{st}^\sharp$.  Defining $C: \coprod_{\lambda\in \Par(n)} N(\lambda) \times N(\lambda) \to A=\Hec(n)_{R,q}$  by $C(s,t)=C^\lambda_{st}$, for $s,t\in N(\lambda)$, we have by \lq\lq transport of structure", that $(\Par(n),N,C,{}^*)$ is a cell structure on $A=\Hec(n)_{R,q}$.   Moreover, for $\lambda\in \Par(n)$, we have the cell module $W(\lambda)=\tildeW(\lambda')^\sharp=(\tildeS^{\lambda'})^\sharp $, which, by
  \cite[Theorem 5.2 and Theorem 5.3]{Murphy} is $\Sp(\lambda)_{R,q}$.  Clearly, the cell structure just given is obtained, by base change, from cell structure on $\Hec(n)_{J,x}$ constructed in the same way. Hence the structure given on $\Hec(n)_{R,q}$ is a regular cell structure. 
  
  \q From the definitions we have $\tildeA(\sigma)^\sharp=A(\sigma')$, for  a subset $\sigma$ of $\Par(n)$ (where $\sigma'=\{\lambda'\vert\lambda\in \sigma\}$).

\q We make  Lemma 8.2  more  explicit in the Hecke algebra case.

\begin{lemma}   Let $R$ be a domain, $q$ a unit and   $F$ the field of fractions of $R$.  We regard $\Hec(n)_{R,q}$ as a cellular algebra via a regular cell datum.   Suppose that 
$\Hec(n)_{F,q}$ is semisimple. Let $X$ be a left $\Hec(n)_{R,q}$-module, finitely generated and torsion free over $R$. Let $\sigma$ be a cosaturated subset of $\Par(n)$. If $X_F=\oplus_{\lambda\in \sigma} \Sp(\lambda)_{F,q}^{(d_\lambda)}$, for positive integers $d_\lambda$, then 
$$\Ann_{\Hec(n)_{R,q}}(X)=\Hec(n)_{R,q}(\tau)$$ 
where $\tau$ is the complement of $\sigma$ in $\Par(n)$.

\end{lemma}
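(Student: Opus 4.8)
The plan is to obtain Lemma 8.3 as the special case of Lemma 8.2 in which $A=\Hec(n)_{R,q}$, equipped with the given regular cell datum $(\Par(n),N,C,{}^*)$. First I would record what \emph{regular} buys us: by definition the poset $\Lambda$ is $\Par(n)$ with the dominance order, so the notion of cosaturated subset of $\Par(n)$ used here coincides with the one appearing in Lemma 8.2, and the complement $\tau$ of the cosaturated set $\sigma$ is saturated, so that $A(\tau)=\Hec(n)_{R,q}(\tau)$ is a genuine cell ideal. Moreover, again by regularity, the cell module $W(\lambda)$ of $\Hec(n)_{R,q}$ is isomorphic to $\Sp(\lambda)_{R,q}$ for every $\lambda\in\Par(n)$.

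Next I would match the module-theoretic hypotheses. Base change of cell modules along $R\to F$ gives $W_F(\lambda)\cong W(\lambda)_F$, and since the datum is regular and the Specht modules are free over $R$ and commute with base change, $W(\lambda)_F\cong \Sp(\lambda)_{R,q}\otimes_R F\cong \Sp(\lambda)_{F,q}$. Hence the hypothesis $X_F=\bigoplus_{\lambda\in\sigma}\Sp(\lambda)_{F,q}^{(d_\lambda)}$ with $d_\lambda>0$ is precisely the hypothesis $X_F=\bigoplus_{\lambda\in\sigma}W_F(\lambda)^{(d_\lambda)}$ of Lemma 8.2. The remaining hypotheses of Lemma 8.2 — that $\Hec(n)_{F,q}$ is semisimple and that $X$ is finitely generated and torsion free over the domain $R$ — are imposed outright in the statement of Lemma 8.3. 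Applying Lemma 8.2 then gives $\Ann_{\Hec(n)_{R,q}}(X)=A(\tau)=\Hec(n)_{R,q}(\tau)$, as required.

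I do not expect a genuine obstacle here: the mathematical content is already contained in Lemma 8.2, so the argument is essentially a translation of that statement into the concrete setting of Hecke algebras. The only point to be handled with a little care is the bookkeeping identifying the abstract objects ($\Lambda$, $W(\lambda)$, cosaturated subsets, the cell ideal $A(\tau)$) with their Hecke-algebra counterparts ($\Par(n)$ with the dominance order, $\Sp(\lambda)_{R,q}$, dominance-cosaturated subsets, $\Hec(n)_{R,q}(\tau)$), together with the routine verification that cell modules and Specht modules are compatible with the base change $R\to F$.
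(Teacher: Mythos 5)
Your proposal is correct and matches the paper's approach exactly: the paper introduces Lemma 8.3 with the phrase ``We make Lemma 8.2 more explicit in the Hecke algebra case'' and gives no separate proof, treating it precisely as the specialisation of Lemma 8.2 to $A=\Hec(n)_{R,q}$ with the regular cell datum, with $\Lambda=\Par(n)$ and $W(\lambda)\cong \Sp(\lambda)_{R,q}$. Your bookkeeping of the identifications and the compatibility of cell modules with base change $R\to F$ is the intended content.
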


\q We specialise further to the case in which the module $X$ as  above is a permutation module. For a subset $\sigma$ of $\Par(n)$ we write  $\sigma^{\trianglerighteq}$ for the set of $\mu\in \Par(n)$ such that $\mu\trianglerighteq\lambda$ for some $\lambda\in \sigma$.

\begin{lemma}   Let $R$ be a domain, $q$ a unit and   $F$ the field of fractions of $R$.   We regard $\Hec(n)_{R,q}$ as a cellular algebra via a regular cell datum.   Suppose that $\Hec(n)_{F,q}$ is semisimple. Let $X$ be a left $\Hec(n)_{R,q}$-module of the form \\
$\bigoplus_{\lambda\in \sigma} M(\lambda)_{R,q}^{(d_\lambda)}$ for a subset $\sigma$ of $\Par(n)$ with $d_\lambda\neq 0$ for all $\lambda\in\sigma$. Then 
$$\Ann_{\Hec(n)_{R,q}}(X)=\Hec(n)_{R,q}(\tau)$$ 
where $\tau$ is the complement of $\sigma^{\trianglerighteq}$. 
\end{lemma}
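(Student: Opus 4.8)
The plan is to compute the decomposition of $X_F$ into Specht modules and then quote the preceding lemma (Lemma~8.3). Write $X=\bigoplus_{\lambda\in\sigma}M(\lambda)_{R,q}^{(d_\lambda)}$ with each $d_\lambda\geq 1$. Each $M(\lambda)_{R,q}$ is free of finite rank over $R$, with basis $T_{w,R,q}\,x(\lambda)_{R,q}$, $w\in{\mathcal D}_\lambda$; hence $X$ is free of finite rank over $R$, in particular finitely generated and torsion free over $R$, as Lemma~8.3 requires. Moreover $X_F=F\otimes_R X=\bigoplus_{\lambda\in\sigma}M(\lambda)_{F,q}^{(d_\lambda)}$.

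Since $\Hec(n)_{F,q}$ is semisimple, the ($q$-analogue of the) Young rule gives, for each $\lambda\in\Par(n)$,
$$M(\lambda)_{F,q}\cong\bigoplus_{\mu\,\trianglerighteq\,\lambda}\Sp(\mu)_{F,q}^{(K_{\mu\lambda})},$$
where $K_{\mu\lambda}$ is the Kostka number: $K_{\lambda\lambda}=1$, and $K_{\mu\lambda}>0$ exactly when $\mu\trianglerighteq\lambda$. (This follows from the Specht filtration of $M(\lambda)_{F,q}$, cf.\ \cite{DJ1} and \cite[14.1]{James}, together with semisimplicity; equivalently from $M(\lambda)_{F,q}=fS^\lambda E$ and the good filtration of $S^\lambda E$.) Consequently
$$X_F\cong\bigoplus_{\mu\in\Par(n)}\Sp(\mu)_{F,q}^{(e_\mu)},\qquad e_\mu=\sum_{\lambda\in\sigma}d_\lambda K_{\mu\lambda}.$$
As every $d_\lambda\geq 1$ and every $K_{\mu\lambda}\geq 0$ with $K_{\mu\mu}=1$, we have $e_\mu>0$ if and only if $\mu\trianglerighteq\lambda$ for some $\lambda\in\sigma$, i.e.\ if and only if $\mu\in\sigma^{\trianglerighteq}$.

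Finally, $\sigma^{\trianglerighteq}$ is cosaturated: if $\mu\in\sigma^{\trianglerighteq}$ and $\nu\trianglerighteq\mu$, choose $\lambda\in\sigma$ with $\mu\trianglerighteq\lambda$, so $\nu\trianglerighteq\mu\trianglerighteq\lambda$ and $\nu\in\sigma^{\trianglerighteq}$. Applying Lemma~8.3 with the cosaturated set $\sigma^{\trianglerighteq}$ and the positive integers $e_\mu$ ($\mu\in\sigma^{\trianglerighteq}$) in place of $\sigma$ and the $d_\lambda$ there, we obtain $\Ann_{\Hec(n)_{R,q}}(X)=\Hec(n)_{R,q}(\tau)$ with $\tau=\Par(n)\backslash\sigma^{\trianglerighteq}$, as claimed. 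The only substantive ingredient is the Young rule — specifically, identifying which Specht modules occur in $M(\lambda)_{F,q}$, with the correct direction of the dominance order; the rest is routine bookkeeping, so I anticipate no real obstacle.
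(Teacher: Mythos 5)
Your proof is correct and follows essentially the same route as the paper: the paper's proof is the one-liner "By Young's rule, \cite[14.1]{James}, the composition factors of $X_F$ are $\Sp(\lambda)_{F,q}$, $\lambda\in\sigma^{\trianglerighteq}$. So the result follows from the previous lemma." You have simply unpacked this, spelling out the Kostka-number multiplicities, the verification that $\sigma^{\trianglerighteq}$ is cosaturated, and the freeness/torsion-freeness hypotheses needed to invoke Lemma~8.3 — all correct but implicit in the paper.
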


\begin{proof} By Young's rule,  \cite[14.1]{James},  the composition factors of $X_F$ are $\Sp(\lambda)_{F,q}$, $\lambda\in \sigma^{\trianglerighteq}$.    So the result follows from the previous lemma.
\end{proof}

\begin{remark} The main results of \cite{DotNym} are special cases with $\sigma=\{\lambda\}$ for some $\lambda\in \Par(n)$.   We get \cite[Theorem 5.2]{DotNym} by taking $R=F$ in the above lemma and we get \cite[Theorem 5.3]{DotNym} by taking $R=J,q=x$.  The result also gives,  in the classical case, the annihilator of a  Young permutation module over the integral symmetric group algebra $\zed \Sym(n)$ as a cell ideal  (taking $R=\zed$, $q=1$).
\end{remark}

\q We return to the the assumption that $R$ is an arbitrary commutative ring and $q$ a unit.   For a composition $\alpha$ we write $M(\alpha)_{R,q}$ for the left  $\Hec(n)_{R,q}$ induced from the trivial module (free over rank $1$ over $R$) for $\Hec(\alpha)_{R,q}$. Then $M(\alpha)_{R,q}$ may be realised as the left ideal $\Hec(n)_{R,q} x(\alpha)$ and is isomorphic to $M(\lambda)_{R,q}$, where $\lambda$ is the partition of $n$ obtained by arranging the parts of $\alpha$ in descending order. 

\begin{lemma} Let $\lambda,\mu\in \Par(n)$ and suppose that $\mu$ is a coarsening of $\lambda$. Then the annihilator of $M(\lambda)_{R,q}$ annihilates $M(\mu)_{R,q}$. 
\end{lemma}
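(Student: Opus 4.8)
The plan is to show that $M(\mu)_{R,q}$ is isomorphic to a \emph{submodule} of $M(\lambda)_{R,q}$. The lemma then follows immediately: if $N$ is a submodule of a module $M$ over any ring then $\Ann(M)\subseteq\Ann(N)$, and annihilator ideals are invariant under module isomorphism, so $\Ann_{\Hec(n)_{R,q}}(M(\lambda)_{R,q})\subseteq\Ann_{\Hec(n)_{R,q}}(M(\mu)_{R,q})$.

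To produce such a submodule I would first pass to a convenient ordering of the parts. Since $\mu$ is a coarsening of $\lambda$ we may write $\lambda=\bigcup_i\lambda^{(i)}$ with each $\lambda^{(i)}$ a partition of $\mu_i$. Let $\beta$ be the composition of $n$ obtained by concatenating the strings $\lambda^{(1)},\lambda^{(2)},\ldots$ in that order. Then $\beta$ is a rearrangement of the parts of $\lambda$, so $M(\beta)_{R,q}\cong M(\lambda)_{R,q}$ by the description recalled just before the lemma. Moreover, every partial sum $\mu_1+\cdots+\mu_k=|\lambda^{(1)}|+\cdots+|\lambda^{(k)}|$ is a partial sum of $\beta$, so the set of partial sums of $\mu$ is contained in that of $\beta$; equivalently $J(\beta)\subseteq J(\mu)$, so $\Sym(\beta)\subseteq\Sym(\mu)$ as standard Young subgroups of $\Sym(n)$ and $\Hec(\beta)_{R,q}\subseteq\Hec(\mu)_{R,q}$ as subalgebras of $\Hec(n)_{R,q}$. (Note it really is necessary to pass to $\beta$: $\Sym(\lambda)$ need not be contained in $\Sym(\mu)$ for the standard descending orderings.)

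Next I would establish the identity $x(\mu)_{R,q}=z\cdot x(\beta)_{R,q}$ in $\Hec(n)_{R,q}$ for a suitable $z$. Let $\mathcal D$ be the set of minimal-length representatives of the left cosets $w\Sym(\beta)$, $w\in\Sym(\mu)$. By standard properties of Coxeter groups (cf. \cite[Lemma 1.1]{DJ1}) each $w\in\Sym(\mu)$ factors uniquely as $w=dv$ with $d\in\mathcal D$, $v\in\Sym(\beta)$ and $l(w)=l(d)+l(v)$, whence $T_{w,R,q}=T_{d,R,q}T_{v,R,q}$. Summing over $w\in\Sym(\mu)$ gives $x(\mu)_{R,q}=\bigl(\sum_{d\in\mathcal D}T_{d,R,q}\bigr)x(\beta)_{R,q}=z\,x(\beta)_{R,q}$ with $z=\sum_{d\in\mathcal D}T_{d,R,q}\in\Hec(n)_{R,q}$. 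Realising the permutation modules as left ideals as in Section 5, this yields $M(\mu)_{R,q}=\Hec(n)_{R,q}\,x(\mu)_{R,q}=\Hec(n)_{R,q}\,z\,x(\beta)_{R,q}\subseteq\Hec(n)_{R,q}\,x(\beta)_{R,q}=M(\beta)_{R,q}$, so $M(\mu)_{R,q}$ is a submodule of $M(\beta)_{R,q}\cong M(\lambda)_{R,q}$, and we conclude by the first paragraph.

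The only point needing care is the Coxeter-theoretic factorisation $w=dv$ with additive lengths together with the multiplicativity $T_w=T_dT_v$ of the standard basis; both are well known and are already used implicitly in Section 5 when describing bases of the modules $M(\lambda)_{A,q}$, so I expect no real obstacle — once the reduction to the parabolic-in-parabolic inclusion $\Sym(\beta)\subseteq\Sym(\mu)$ is correctly set up, the argument is elementary and works over an arbitrary commutative ring $R$.
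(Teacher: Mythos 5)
Your proof is correct, but it takes a genuinely different, dual route to the paper's. The paper first reduces to simple coarsenings (merging two parts at a time), then for a simple coarsening $\mu$ of $\lambda$ rearranges the parts to a composition $\alpha$ with $\Hec(\lambda)_{R,q}\subseteq\Hec(\alpha)_{R,q}$ and uses Frobenius reciprocity to build a \emph{surjection} $M(\lambda)_{R,q}\twoheadrightarrow M(\alpha)_{R,q}\cong M(\mu)_{R,q}$; the containment of annihilators then follows because the annihilator of a module annihilates every quotient. You instead realise $M(\mu)_{R,q}$ as an honest \emph{submodule} of a reordered copy of $M(\lambda)_{R,q}$, by choosing a rearrangement $\beta$ of the parts of $\lambda$ adapted to the coarsening so that $\Sym(\beta)\subseteq\Sym(\mu)$, and then invoking the parabolic length-additive factorisation to write $x(\mu)_{R,q}=z\,x(\beta)_{R,q}$, whence $\Hec(n)_{R,q}x(\mu)_{R,q}\subseteq\Hec(n)_{R,q}x(\beta)_{R,q}$. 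Both routes rest on the same elementary observation (annihilators pass to subquotients), and both work over an arbitrary commutative ring. Your approach has the modest advantage of avoiding the reduction to simple coarsenings entirely, and replaces the appeal to Frobenius reciprocity with the standard length-additive coset decomposition for nested parabolics, which is arguably more explicit; the paper's surjection argument is shorter once one accepts the reduction step. One small but worthwhile caveat: the intermediate reordering is genuinely needed in both arguments (as you note, $\Sym(\lambda)\not\subseteq\Sym(\mu)$ in general for the descending orderings), and this is exactly the role played in both proofs by passing from the partition to an adapted composition.
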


\begin{proof}  Let $A=\Hec(n)_{R,q}$.  For some positive integer $m$ there  is a chain  $\lambda=\xi(0), \xi(1),\ldots, \xi(m)=\mu$ of elements of $\Par(n)$  such that  $\xi(i)$ is a simple coarsening of $\xi(i-1)$, $1\leq i\leq m$. If the annihilator of $M(\xi(i-1))_{R,q}$ is contained in the annihilator of $M(\xi(i))_{R,q}$, for $1\leq i\leq m$ then we have 
\begin{align*}\Ann_A(M(\lambda)_{R,q})&=\Ann_A(M(\xi(0))_{R,q})\subseteq \Ann_A(M(\xi(1)_{R,q})\cr
& \subseteq \cdots \subseteq \Ann_A(M(\xi(m)_{R,q})=\Ann_A(M(\mu)_{R,q}).
\end{align*}
Thus we may assume that $\mu$ is a simple coarsening of $\lambda$.  Writing \\ $\lambda=(\lambda_1,\lambda_2,\ldots)$, the partition  $\mu$ has the same parts as the composition \\
$\alpha=(\lambda_1,\ldots,\lambda_{i-1},\lambda_i+\lambda_{i+1},\lambda_{i+2},\ldots)$, for some $i$.  Now  \\
$\Hec(\lambda)_{R,q}\subseteq \Hec(\alpha)_{R,q}$ so we have a $\Hec(\lambda)_{R,q}$-module homomorphism $\phi:Rx(\lambda)\to M(\alpha)_{R,q}$ taking $x(\lambda)$ to $x(\alpha)$ and by Frobenius reciprocity this extends to an $A$-module homomorphism $\tilde \phi: M(\lambda)_{R,q}\to M(\alpha)_{R,q}$. Since $x(\alpha)$ is in the image the map $\tilde \phi$ is surjective and hence \\
$\Ann_A(M(\lambda)_{R,q})\subseteq \Ann_A(M(\alpha)_{R,q})$ and since $M(\alpha)_{R,q}$ is isomorphic to $M(\mu)_{R,q}$ we have  $\Ann_A(M(\lambda)_{R,q})\subseteq \Ann_A(M(\mu)_{R,q})$.  \end{proof}

\q Recall that for $\sigma\subseteq \Par(n)$ we are writing $\hatsigma$ for the coarsening of $\sigma$, i.e. the set of all partitions  $\mu$ such that $
\mu$ is a coarsening of some $\lambda\in \sigma$.

\begin{lemma}   Let $\sigma$ be a subset of $\Par(n)$ and let $X=\bigoplus_{\lambda\in \sigma} M(\lambda)_{R,q}^{(d_\lambda)}$, where $d_\lambda\neq 0$ for all $\lambda\in \sigma$. Then $\Ann_{\Hec(n)_{R,q}}(X)=\Ann_{\Hec(n)_{R,q}}({\hat X})$, where ${\hat X}=\bigoplus_{\lambda\in \hatsigma} M(\lambda)_{R,q}$. 
\end{lemma}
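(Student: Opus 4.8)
The plan is to reduce the statement to the behaviour of annihilators of the individual permutation modules $M(\lambda)_{R,q}$, using two elementary observations about annihilators over any algebra: for a direct sum one has $\Ann_{\Hec(n)_{R,q}}\bigl(\bigoplus_i V_i\bigr)=\bigcap_i \Ann_{\Hec(n)_{R,q}}(V_i)$, and $\Ann_{\Hec(n)_{R,q}}\bigl(V^{(d)}\bigr)=\Ann_{\Hec(n)_{R,q}}(V)$ for every $d\geq 1$. Writing $A=\Hec(n)_{R,q}$, these give at once
$$\Ann_A(X)=\bigcap_{\lambda\in\sigma}\Ann_A(M(\lambda)_{R,q}),\qquad \Ann_A({\hat X})=\bigcap_{\mu\in\hatsigma}\Ann_A(M(\mu)_{R,q}).$$

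First I would note the inclusion $\Ann_A({\hat X})\subseteq\Ann_A(X)$. Since every partition is a (trivial) coarsening of itself, $\sigma\subseteq\hatsigma$, so the intersection defining $\Ann_A({\hat X})$ is taken over a larger index set and is therefore contained in the intersection defining $\Ann_A(X)$.

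For the reverse inclusion I would take $a\in\Ann_A(X)$ and an arbitrary $\mu\in\hatsigma$. By the definition of the coarsening $\hatsigma$ there is some $\lambda\in\sigma$ of which $\mu$ is a coarsening, and then Lemma 8.8 gives $\Ann_A(M(\lambda)_{R,q})\subseteq\Ann_A(M(\mu)_{R,q})$; as $a$ lies in the left-hand side, it lies in the right-hand side. Letting $\mu$ range over $\hatsigma$ shows $a\in\Ann_A({\hat X})$, and hence $\Ann_A(X)=\Ann_A({\hat X})$.

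I expect no real obstacle here: the entire content is already packaged in Lemma 8.8, whose proof produces, via Frobenius reciprocity, a surjection $M(\lambda)_{R,q}\twoheadrightarrow M(\mu)_{R,q}$ whenever $\mu$ is a simple coarsening of $\lambda$ and then chains these along a sequence of simple coarsenings. The only points needing a word of justification are the two routine reductions — that passing to a direct sum replaces the annihilator by an intersection and that multiplicities are irrelevant — both of which are immediate.
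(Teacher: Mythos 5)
Your argument matches the paper's proof: the key step is precisely the preceding lemma (numbered 8.6 in the paper, not 8.8) that coarsening weakly shrinks annihilators, combined with the routine observation that the annihilator of a direct sum with multiplicities is the intersection of the annihilators of its distinct summands. The only cosmetic difference is that the paper leaves the easy inclusion $\Ann_A(\hat X)\subseteq\Ann_A(X)$ implicit, whereas you spell it out via $\sigma\subseteq\hatsigma$.
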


\begin{proof}   Let $A=\Hec(n)_{R,q}$. We must show that $\Ann_A(X)$ annihilates $M(\mu)_{R,q}$ for all $\mu\in \hatsigma$.  Now such $\mu$ is a coarsening of some $\lambda\in \sigma$ and hence $\Ann_A(X)\subseteq \Ann_A(M(\lambda)_{R,q})\subseteq \Ann_A(M(\mu)_{R,q})$, by Lemma 8.6. 
\end{proof}

\q Recall, from \cite{Murphy}, that we have on $\Hec(n)_{R,q}$ the $R$-bilinear symmetric bilinear form $\langle \ ,  \ \rangle: \Hec(n)_{R,q} \times \Hec(n)_{R,q} \to R$.  For $a,b\in \Hec(n)_{R,q}$, the value of   $\langle a , b \ \rangle$ is the coefficient of $1$ in $ab^*$,  expressed as an $R$-linear combination of the elements $T_w$, $w\in \Sym(n)$.   

\q If $s$ is a standard $\lambda$ tableau we write $[s]=\lambda$.   For $\lambda\in \Par(n)$, for $s$ a standard $\lambda$-tableau and $1\leq m\leq n$  we write $s\downarrow m$ for the the standard tableau whose Young diagram has boxes $(i,j)$ such that the $s(i,j)\in \{1,\ldots,m\}$,  and $(s\downarrow m)(i,j)=s(i,j)$.   As in \cite{Murphy},  we have the dominance partial order on $\lambda$ tableaux. Thus $s \trianglelefteq t$ if  $[s\downarrow m] \trianglelefteq [t\downarrow m]$ for all $1\leq m\leq n$.  For $\lambda$ tableaux $s,t,u,v$ we write $(s,t) \trianglelefteq (u,v)$ if $s \trianglelefteq u$ and $t\trianglelefteq v$.   Let $t$ be a $\lambda$-tableau. We write $t'$ for the transpose tableau, i.e. the $\lambda'$-tableau with $t'(i,j)=t(j,i)$, for $(i,j)$ in the Young diagram of $\lambda'$.   For standard $\lambda$-tableaux $s,t,u,v$ we have 
$$\langle x_{s't'}^\sharp, x_{uv}  \rangle = 0  \hbox{ unless }    (u,v) \trianglelefteq (s,t)\eqno{(4)}$$
and
$$\langle x_{s't'}^\sharp, x_{st}  \rangle = \pm q^N  \hbox{ for some positive integer   }     N  \eqno{(5)}$$
by \cite[Lemma 4.¤6]{Murphy}.

 \q Note that if $X=\bigoplus_{\lambda\in \Par(n)} M(\lambda)_{R,q}^{(d_\lambda)}$ then the $d_\lambda$ are uniquely determined by $X$.  (If $F$ is any residue field of $R$ and $\barq$ is the image of $q$ in $F$ then we then we have the  $\Hec(n)_{F,\barq}$-module   $X_F=\bigoplus_{\lambda\in \Par(n)} M(\lambda)_{F,\barq}^{(d_\lambda)}$  and so the $d_\lambda$ are uniquely determined by Lemma 5.6).   Thus we may extend the notation of Section 5 to Young permutation modules over $\Hec(n)_{R.q}$ by  putting $\zeta(X)=\{\lambda\in \Par(n) \vert d_\lambda\neq 0\}$.

\begin{proposition}    Let $X$ be a Young permutation module for $\Hec(n)_{R,q}$ and put $\sigma=\zeta(X)$.  If $\hatsigma$ is cosaturated then $\Ann_{\Hec(n)_{R,q}}(X)$ is the cell ideal $\Hec(n)_{R,q}(\tau)$, where $\tau$ is the complement of $\hatsigma$.
\end{proposition}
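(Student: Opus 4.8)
The plan is first to reduce, via Lemma 8.8, to the case $X=\bigoplus_{\mu\in\hatsigma}M(\mu)_{R,q}$: indeed $\Ann_{\Hec(n)_{R,q}}(X)=\Ann_{\Hec(n)_{R,q}}\big(\bigoplus_{\mu\in\hatsigma}M(\mu)_{R,q}\big)$, and for the latter module $\zeta=\hatsigma$ is still co-saturated, so $\tau=\Par(n)\setminus\hatsigma$ is saturated and $\Hec(n)_{R,q}(\tau)$ is a cell ideal. Write $A=\Hec(n)_{R,q}$, let $I=A(\tau)$ be the cell ideal of the regular ($\sharp$-twisted Murphy) cell datum, and let $\tildeA(\hatsigma)$ denote the Murphy cell ideal of the second kind, i.e. the $R$-span of the Murphy basis elements $x_{uv}$ whose shape lies in $\hatsigma$ (a two-sided ideal, since $\hatsigma$ is co-saturated). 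For $\mu\in\hatsigma$ one has $x(\mu)=x_{t^\mu t^\mu}\in\tildeA(\hatsigma)$, hence $M(\mu)_{R,q}=Ax(\mu)\subseteq\tildeA(\hatsigma)$, and therefore $X\subseteq\tildeA(\hatsigma)$.

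The core of the argument is to identify $\Ann_A(X)$ with the left annihilator $\{a\in A\mid a\,\tildeA(\hatsigma)=0\}$ of $\tildeA(\hatsigma)$ in $A$, and then with $I$. For the first identification: a Murphy basis element $x_{uv}$ of shape $\mu\in\hatsigma$ factors as $x_{uv}=T^*_{d(u)}\,x(\mu)\,T_{d(v)}$, and $T^*_{d(u)}x(\mu)\in Ax(\mu)=M(\mu)_{R,q}$, a direct summand of $X$; so if $a$ kills $X$ then $a\cdot T^*_{d(u)}x(\mu)=0$, hence $ax_{uv}=0$, and as such elements span $\tildeA(\hatsigma)$ we get $a\,\tildeA(\hatsigma)=0$; conversely $a\,\tildeA(\hatsigma)=0$ forces $aX=0$ since $X\subseteq\tildeA(\hatsigma)$. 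For the second identification I will use Murphy's symmetric bilinear form $\langle\ ,\ \rangle$ on $A$, which is non-degenerate ($A$ being a symmetric $R$-algebra with symmetrizing trace ``coefficient of $T_1$'') and satisfies $\langle ab,c\rangle=\langle b,a^*c\rangle$. By symmetry of the form, $a\,\tildeA(\hatsigma)=0$ is equivalent to $\langle a^*y,z\rangle=0$ for all $y\in A$ and all $z\in\tildeA(\hatsigma)$, i.e. to $a^*A\subseteq\tildeA(\hatsigma)^{\perp}$. Thus everything reduces to the identity $\tildeA(\hatsigma)^{\perp}=I$: granting it, $a\,\tildeA(\hatsigma)=0$ gives $a^*\in\tildeA(\hatsigma)^{\perp}=I$ (take $y=1$), so $a\in I^*=I$ because $I$ is $*$-stable; and if $a\in I$ then $a^*\in I$, whence $a^*A\subseteq I=\tildeA(\hatsigma)^{\perp}$ because $I$ is a two-sided ideal. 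This will give $\Ann_A(X)=I=\Hec(n)_{R,q}(\tau)$.

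To prove $\tildeA(\hatsigma)^{\perp}=I$ I will use the triangular pairing, with respect to $\langle\ ,\ \rangle$, between the cell basis $\{x^\sharp_{s't'}\}$ (indexed by $\lambda\in\Par(n)$ and standard $\lambda$-tableaux $s,t$) and the Murphy basis $\{x_{uv}\}$: by (4) and (5), together with the analogous statement for tableaux of different shapes contained in \cite[Lemma 4.6]{Murphy}, this pairing is triangular for the product order on shapes and pairs of tableaux, with diagonal entries $\pm q^{N}$, which are units. Since $\tau$ is a down-set and $\hatsigma$ an up-set, triangularity immediately gives $\langle I,\tildeA(\hatsigma)\rangle=0$, so $I\subseteq\tildeA(\hatsigma)^{\perp}$; conversely, an element of $\tildeA(\hatsigma)^{\perp}$ with a non-zero coefficient on some $x^\sharp_{s't'}$ with $\lambda\in\hatsigma$ can be paired, by choosing such a term extremal in the order, against the corresponding $x_{st}\in\tildeA(\hatsigma)$ to produce a unit multiple of that coefficient equal to $0$ --- a contradiction. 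Hence $\tildeA(\hatsigma)^{\perp}=I$.

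I expect the main obstacle to be this last point: the triangularity of the form between the (twisted) Murphy cell basis and the Murphy basis and, above all, the fact that the relevant diagonal entries are the units $\pm q^{N}$. It is exactly this unit-diagonal feature that makes the argument valid over an arbitrary commutative coefficient ring $R$, and not merely over a domain with semisimple generic Hecke algebra as in Lemma 8.4; so the proposition recovers and strengthens that lemma. A smaller technical matter to be pinned down in the write-up is the precise direction of the cross-shape triangularity inequality, which decides whether one should take a dominance-maximal or a dominance-minimal non-vanishing cell coefficient above; it does not affect the structure of the proof.
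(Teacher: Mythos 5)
Your proof is correct in substance but takes a genuinely different route from the paper's. The paper proves the inclusion $A(\tau)\subseteq\Ann_A(X)$ by base change from a domain with semisimple generic Hecke algebra (Lemma 8.4), and then proves the reverse inclusion by induction on $|\hatsigma|$, stripping off a \emph{minimal} shape $\lambda\in\hatsigma$ at each stage so that only the \emph{same-shape} triangularity (4), (5) is needed to kill the $A(\lambda)$-component of a putative annihilating element. You instead characterise $\Ann_A(X)$ as the left annihilator of the Murphy cell ideal $\tildeA(\hatsigma)$, use nondegeneracy and associativity of the symmetrising form to convert this into the orthogonality statement $\tildeA(\hatsigma)^{\perp}=A(\tau)$, and prove that identity directly by a triangularity argument. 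This is cleaner and gives both inclusions simultaneously, with no appeal to Lemma 8.4 or the semisimple generic case. The price is that you genuinely need the \emph{cross-shape} triangularity $\langle x^\sharp_{s't'},x_{uv}\rangle=0$ unless $\mu\trianglelefteq\lambda$ (with $s,t$ of shape $\lambda$ and $u,v$ of shape $\mu$), which is not contained in (4) or (5) as stated in the paper; you correctly flag this and attribute it to \cite[Lemma 4.6]{Murphy}, and it can also be deduced from the standard fact that $y(\lambda')Ax(\mu)=0$ unless $\mu\trianglelefteq\lambda$. As for the ``smaller technical matter'' you defer: the right extremal choice is $\lambda_0$ \emph{maximal} among shapes in $\hatsigma$ carrying a nonzero coefficient (cosaturation of $\hatsigma$ then forces every contributing shape to equal $\lambda_0$), followed by the maximal tableau pair as in the paper's within-shape argument; choosing $\lambda_0$ minimal does not work because larger shapes in $\hatsigma$ would still contribute. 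With these two points filled in, your argument is a valid and slightly more self-contained proof of the proposition.
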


\begin{remark}  One gets the result for \lq\lq most " coefficient rings using the main results of this paper.  For example if $R=F[x,x^{-1}]$ is the ring of Laurent polynomials over a field $F$  then the result holds for $\Hec(n)_{R,x}(n)$ by Lemma 8.4, since $\Hec(n)_{F(x),x}$ is semisimple.   Moreover for each residue field $E$ of $F[x,x^{-1}]$ we have  that 
$E\otimes_R \Ann_{\Hec(n)_{R,x}}(X)\to \Ann_{\Hec(n)_{E,q}}(X_E)$  is an isomorphism, by Lemma 5.9.  Writing $X=\bigoplus_{\lambda\in \Lambda} M(\lambda)_{R,q}^{(d_\lambda)}$ and putting 
and ${\tilde X}=\bigoplus_{\lambda\in \sigma} M(\lambda)_{F[x,x^{-1}],x}$  we have, for any commutative $F$-algebra $R$ and unit $q\in R$, that  the natural map 
$R\otimes_{F[x,x^{-1}]} \Ann_{\Hec(n)_{F[x,x^{-1}],x}} ({\tilde X}) \to \Ann_{\Hec(n)_{R,q}}(X)$  is an isomorphism, by Corollary 2.13.  Hence

\begin{align*}\Ann_{\Hec(n)_{R,q}}(X)   & =R\otimes  _{F[x,x^{-1}]}    \Ann_{\Hec(n)_{F[x,x^{-1}],x}} ({\tilde X}) \cr
&=R\otimes_{F[x,x^{-1}]}    \Hec(n)_{F[x,x^{-1}],x}(\tau)\cr
&=\Hec(n)_{R,q}(\tau).
\end{align*}

\q Likewise if $q=1$ then $\Hec(n)_{R,q}=R\Sym(n)$ and the result holds for $\zed \Sym(n)$, since $\que \Sym(n)$ is semisimple.  But we have base change on annihilators by Proposition 5.9 and Lemma 3.5.  Thus the result holds for $R\Sym(n)$ for an arbitrary commutative ring $R$. 

\q However, we are unable to get the result for arbitrary $R,q$ by base change from the case $J=\zed[x,x^{-1}]$, $q=x$ using the results of Part I since we are assuming there that the base ring is a Dedekind domain (and $J$ is not.)

\q For    complete generality  we need to use  the argument of H\"arterich, \cite[Lemma 3]{Haerterich} (see also \cite[Proposition 7.2]{XX}).

\end{remark}

\begin{proof}  
We put $A=\Hec(n)_{R,q}$.   By base change and Lemma 8.4  we have $A(\tau)\subseteq \Ann_A(X)$.  

\q We must prove $\Ann_A(X)\subseteq A(\tau)$.  For a subset $\rho$ of $\Par(n)$ we put $X(\rho)=\bigoplus_{\lambda\in \rho} M(\lambda)_{R,q}$.  Clearly we may assume $X=X(\sigma)$.   Moreover by Lemma 8.7,  we may replace $\sigma$ by $\hatsigma$, i.e. we may  assume $\sigma$ is cosaturated.  We prove the result for cosaturated subsets $\sigma$ of $\Par(n)$ by induction on $|\sigma|$.  The result is clear if $\sigma$ is empty. Now suppose that $\sigma$ is not empty and the result holds for smaller cosaturated subsets.   

\q Let $\lambda$ be a minimal element of $\sigma$. By the inductive hypothesis the annihilator  of $X(\sigma\backslash\{\lambda\})$ is $A(\tau\bigcup\{\lambda\})=A(\tau)\oplus A(\lambda)$.    Thus we may write $h\in \Ann_A(X)$ in the
 form $h=h_1+h_2$ with 
 $h_1\in A(\tau)$, $h_2\in A(\lambda)$ and since $h_1\in \Ann_A(X)$ we have $h_2\in \Ann_A(X)$. Thus it suffices to prove that $h_2=0$, i.e.  that an element $h\in \Ann_A(X) \bigcap A(\lambda)$ is $0$.  We write $h=\sum_{s,t} r_{st} x_{s',t'}^\sharp$, with $s,t$ running over the standard  $\lambda$ tableaux and $r_{st}\in R$.   Now $h M(\lambda)_{R,q}=0$ i.e. 
   $hA x(\lambda)=0$ and so 
    $h  T_{w_1}^* x(\lambda) T_{w_2}=0$ for all $w_1,w_2\in \Sym(n)$.
     In particular  we have $h x_{uv}=0$
      for all standard $\lambda$ tableaux $u,v$ and hence 
 $h x_{uv}^*=h x_{vu}=0$ for all standard $\lambda$ tableaux $u,v$.  Thus  we have $\langle h, x_{uv} \rangle=0$ i.e.
 $$\sum_{s,t} r_{st} \langle x_{s't'}^\sharp, x_{uv} \rangle =0$$
 for all standard $\lambda$ tableaux $u,v$.
 
 \q Suppose, for a contradiction  that $h\neq 0$ and choose a pair of $\lambda$ tableaux  $(s_0,t_0)$ maximal (in the ordering $\trianglelefteq$) in the set 
 $\{  (s,t)\in \tildeN(\lambda) \vert r_{st}   \neq 0\}$.  Then  $\sum_{s,t} r_{st} \langle x_{s't'}^\sharp, x_{s_0t_0} \rangle =0$. But if a summand $r_{st} \langle x_{s't'}^\sharp, x_{s_0t_0} \rangle$ is non-zero then we have $r_{st}\neq 0$ and  $(s_0,t_0) \trianglelefteq (s,t)$, by (4),  and hence $(s,t)=(s_0,t_0)$. Hence we have 
 $$\sum_{s,t} r_{st} \langle x_{s't'}^\sharp, x_{s_0t_0} \rangle = r_{s_0t_0}  \langle x_{s_0't_0'}^\sharp, x_{s_0t_0} \rangle $$
 which, by (5), is  $\pm q^N r_{s_0t_0}$, for some positive integer $N$, and so $r_{s_0t_0}=0$, a contradiction.  Hence $h=0$ and $\Ann_A(X)=A(\tau)$. 
 \end{proof}

 \begin{example}   Let $H$ be a hook module for $\Hec(n)_{R,q}$, i.e. a finite direct sum $\bigoplus_\lambda M(\lambda)_{R,q}^{(d_\lambda)}$, with $\lambda$ running over hook partitions of $n$. Recall that the index $\idx(H)$ of $H$ is the smallest  positive integer $a$ such that $d_{(a,1^{n-a})}\neq 0$. 
Then putting   $\sigma=\{\lambda\in \Par(n) \vert d_\lambda\neq 0\}$ the set $\hatsigma$ consists of all 
  $\lambda=(\lambda_1,\lambda_2,\ldots)\in \Par(n)$ 
  such that   $\lambda_1\geq \idx(H)$, see Section 6.   Hence 
  $\Ann_{\Hec(n)_{R,q}}(H)$ is the cell ideal $\Hec(n)_{R,q}(\tau)$, where $\tau$ is the set of partitions $\lambda=(\lambda_1,\lambda_2,\ldots)$ such that $\lambda_1<\idx(H)$.

\q In particular   we can take  $q=1$. Let $V$ be a free module of rank $n$ over an arbitrary commutative ring $R$, with $R$-basis $v_1,\ldots,v_n$.  We regard $V$,      and hence  the $r$-fold tensor product
$V^{\otimes r}=V\otimes_R \otimes \cdots \otimes_R  V$  as a permutation module for 
$A=R\Sym(n)$.  Then $V^{\otimes r}$ is the permutation module on the $\Sym(n)$-set $I(n,r)$.  We regard $V^{\otimes r}$ as an $R\Sym(m)$-module, for $m\leq n$, by restriction.  Then from  the above   (or Remark 8.9)   $\Ann_{R\Sym(m)}(V^{\otimes r})$ is the cell ideal $A(\tau)$, where $\tau$ is the set of all $\lambda=(\lambda_1,\lambda_2,\ldots)\in \Par(m)$ such that  $\lambda_1< \idx(I(n,r)|_{\Sym(m)})$, i.e.  (by Section 6, (2))  $\lambda_1< m-\min\{m,r\}$.    That this holds for  $m=n,n-1$ and 
with $R$ a commutative domain is the main result Theorem 7.5 of    \cite{XX}.  

\end{example}





\end{document}